\newtheorem{theoremalph}{Theorem}
\newtheorem{corollary-main}[theoremalph]{Corollary}
\newtheorem{Theorem}{Theorem}[section]
\newtheorem*{Theorem A}{Theorem A}
\newtheorem*{Theorem A'}{Theorem A'}
\newtheorem*{Thm}{Theorem}
\newtheorem*{Conj*}{Conjecture}
\newtheorem{Proposition}[Theorem]{Proposition}
\newtheorem{Lemma}[Theorem]{Lemma}
\newtheorem{Remark-numbered}[Theorem]{Remark}
\newtheorem{Remarks-numbered}[Theorem]{Remarks}
\newtheorem{Corollary}[Theorem]{Corollary}
\newtheorem*{Claim}{Claim}
\newtheorem*{Fact}{Fact}
\newtheorem{Claim-numbered}{Claim}
\def\MM{{\mathbb M}} \def\NN{{\mathbb N}}
\def\cA{{\cal A}}   \def\cM{{\cal M}} 
\def\cB{{\cal B}}    
\def\cC{{\cal C}}    
    \def\cV{{\cal V}}
\def\dim{\operatorname{dim}}
\def\Leb{\operatorname{Leb}}
\begin{document}

\title{SRB measures for  mostly expanding partially hyperbolic diffeomorphisms via the variational approach}

\author{David Burguet and Dawei Yang\footnote{Just after submitting the first version to Arxiv, the second author learned that the first author had also obtained similar results, which were notably presented at the Partial Hyperbolicity Summer School in Maryland (May 2023). The authors therefore decided to do this work together. 
D. Yang  was partially supported by National Key R\&D Program of China (2022YFA1005801), by NSFC 12171348 and NSFC 12325106.
}}


\maketitle
\begin{abstract}
By using the variational approach, we prove  the existence of Sinai-Ruelle-Bowen measures for partially hyperbolic $\mathcal C^1$ diffeomorphisms with  mostly expanding properties. The same conclusion holds true if one considers a dominated splitting $E\oplus F$, where $\dim E=1$ and $F$ is mostly expanding.  When the diffeomorphisms are $\mathcal C^{1+\alpha}$, we prove the basin covering property for both cases.
\end{abstract}
\section{Introduction}

In differentiable ergodic theory, Sinai-Ruelle-Bowen measures are important objects that have nice variational, geometrical and observable properties. The SRB theory has been established for uniformly hyperbolic systems by a sequence of works by Sinai, Ruelle and Bowen in the last seventies \cite{Sin72,BoR75,Rue76}.

\smallskip

Beyond uniform hyperbolicity, an important notion ``partial hyperbolicity'' studied by Brin, Pesin and Sinai \cite{PeS82} attracted people's attention from the eighties.

Let $M$ be compact $d$-dimensional Riemannian manifold and $f:~M\to M$ be a $\mathcal C^1$ diffeomorphism. For a compact invariant set $\Lambda$, a $Df$-invariant sub-bundle $E\subset T_\Lambda M$ is said to be \emph{contracted}, if there are $C>0$ and $\lambda\in(0,1)$ such that for any $x\in\Lambda$ and any $n\in\NN$, one has $\|Df^n|_{E(x)}\|\le C\lambda^n$. An invariant bundle $E$ is said to be \emph{expanded} if it is contracted for $f^{-1}$. For two $Df$-invariant sub-bundles $E,F\subset T_\Lambda M$, one says that $E$ is \emph{dominated} by $F$, if there are $C>0$ and $\lambda\in(0,1)$ such that for any $x\in\Lambda$ and any $n\in\NN$, one has $\|Df^n|_{E(x)}\|\|Df^{-n}|_{F(f^nx)}\|\le C\lambda^n$.

We will mainly consider the following \emph{partially hyperbolic} splitting $T_\Lambda M=E\oplus F$, where $E$ is contracted and $E$ is dominated by $F$. It was asked generally how the SRB theory can be established for general dynamical systems, see for instance  \cite{Pal05}. Since partially hyperbolic diffeomorphisms form an open set in the space of diffeomorphisms, an immediate task is to establish the SRB theory in the partially hyperbolic setting.

One can define SRB measures from two different points of view: the variational property and the geometric property. For a $\mathcal C^1$ diffeomorphism $f$, given an invariant measure $\mu$,   we let 
$$\lambda_d(x)\le  \cdots\le  \lambda_2(x)\le \lambda_1(x)$$
 be the  Lyapunov exponents given by  Oseledets theroem for $\mu$-almost every point $x$.   By Ruelle inequality \cite{Rue78},   the entropy $h_\mu(f)$ of $\mu$ is always bounded from above by $\int\sum_{\lambda_i(x)>0}\lambda_i(x){\rm d}\mu(x)$.  The measure $\mu$ is said to be a \emph{Sinai-Ruelle-Bowen measure}, or an SRB measure for short,  when $\lambda_1(x)>0$ for $\mu$ a.e.  $x$ and the equality holds, i.e.  
\begin{equation}\label{Pesinformula}h_\mu(f)=\int\sum_{\lambda_i(x)>0}\lambda_i(x){\rm d}\mu(x)>0,\end{equation}
 This is the \textit{variational property} of SRB measures. When $f$ is a $\mathcal C^{1+\alpha}$ diffeomorphism, and if $\mu$ has positive Lyapunov exponents, then by Pesin theory, one knows the existence of unstable manifolds for $\mu$-almost every point and one may construct   measurable partitions subordinate to these unstable manifolds. If the conditional measures of $\mu$ along such a  measurable partition are absolutely continuous with respect to the Lebesgue measures of unstable manifolds, then $\mu$ is also said to be an SRB measure. These two definitions are compatible because  they are equivalent for $\mathcal C^{1+\alpha}$ diffeomorphisms according to a well-know theorem of Ledrappier and Young \cite{LeY85} (see also \cite{Led84, Bro22}).  In this context, it follows from Pesin theory, that an ergodic SRB measure $\mu$,  which is hyperbolic, i.e.  $\forall i, \int \lambda_i\,d\mu\neq 0$,  has a basin of positive Lebesgue measure.

There are many methods to build SRB measures  : by coding with Markov partitions, by pushing the Lebesgue measure on unstable disk (the so-called geometrical approach), by inducing with   Gibbs-Markov-Young structure,  by taking zero-noise limits of random perturbations, ...

We recall now a pionneer  work on SRB measures for partially hyperbolic systems.  
A compact set $\Lambda$ of $M$  is said to be an \emph{attractor} of a $\mathcal C^1$ diffeomorphism $f:M\rightarrow M$ if there is a neighborhood $U$ of $\Lambda$ such that $f(\overline{U})\subset U$ and $\bigcap_{n\in\NN}f^n(U)=\Lambda$.  Recall  also that when $L:~V_1\to V_2$ is a linear isomorphism between two normed linear spaces $V_1$ and $V_2$, the mini-norm or the co-norm of $L$ is $m(L)$ and is defined by the following way:
$$m(L)=\inf_{v\in V_1\setminus\{0\}}\frac{\|Lv\|}{\|v\|}.$$ A  continuous bundle $F\subset T_\Lambda M$,  may be extended continuously on a neighborhood $U$ of $\Lambda$. 
Then one may define the upper assympotic Birkhoff sum of $m(Df|_F)$ as follows:
$$\forall x\in U, ~~~\ \overline{m}_F(x,f):=  \limsup_{n\to+\infty}\frac{1}{n}\sum_{i=0}^{n-1}\log m(Df|_{F(f^ix)}).$$
This quantity does not depend on the choice of the extension of $F$. 

\begin{Thm}[\cite{ABV00,ADLP}] Assume that $\Lambda=\bigcap_{n\in \NN}f^nU$ is an attractor  of  a $\mathcal C^{1+\alpha}$ diffeomorphism $f$ with a dominated splitting $T_\Lambda M=E\oplus  F$ with $E$ being uniformly contracting.  

Then Lebesgue a.e.  $x\in U$ with  \begin{equation}\label{mostexp}
 \overline{m}_F(x,f)>0
  \end{equation}
lies in the basin of an ergodic hyperbolic SRB measure.  

Moreover for any $a>0$, the set $\left\{x\in U, \  \overline{m}_F(x,f)>a\right\}$ may be  covered Lebesgue almost everywhere by the basins of finitely many ergodic hyperbolic SRB measures.  
\end{Thm}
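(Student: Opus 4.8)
The plan is to follow the geometric (``push‑forward‑Lebesgue'') approach: iterate the Lebesgue measure carried by disks tangent to a centre‑unstable cone field, average in time, and exploit the mostly expanding hypothesis through Pliss's lemma and bounded distortion at hyperbolic times. Concretely, since $E$ is dominated by $F$ over $\Lambda$ and $f(\overline U)\subset U$, I would first shrink $U$, extend the splitting continuously over $U$, and fix a forward invariant centre‑unstable cone field $\mathcal{C}^{cu}$ around $F$. Call a $C^{1+\alpha}$ embedded $(\dim F)$-disk $D\subset U$ \emph{admissible} if $T_yD\subset\mathcal{C}^{cu}(y)$ for all $y$ and its curvature lies below a fixed bound; by domination the graph transform under $f$ sends admissible disks to admissible disks with curvature contracted, and $f^n(U)\subset U$ keeps all iterates in $U$. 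Fix $\sigma\in(0,1)$ with $\log\frac1\sigma<a/2$, and for $x$ on an admissible disk call $n\in\NN$ a $\sigma$-hyperbolic time of $x$ if $\prod_{i=n-j}^{n-1}m(Df|_{F(f^ix)})\ge\sigma^{-j}$ for all $1\le j\le n$. When $\overline m_F(x,f)>a$, along the subsequence realizing the $\limsup$ one has $\frac1n\sum_{i=0}^{n-1}\log m(Df|_{F(f^ix)})>a/2$; since $\log m(Df|_F)$ is bounded, Pliss's lemma then produces a frequency $\theta=\theta(a)>0$ of $\sigma$-hyperbolic times, so $x$ has infinitely many of them. At a $\sigma$-hyperbolic time $n$ the inverse branches $f^{-1},\dots,f^{-n}$ are defined and exponentially contracting on a $\delta_1$-neighbourhood of $f^nx$ inside $f^n(D)$, for a uniform $\delta_1=\delta_1(a)>0$; this gives an admissible disk $W_n(x)\ni f^nx$ of inner radius $\ge\delta_1$, and --- using that $f$ is $C^{1+\alpha}$ and summing the geometric series coming from the exponential backward contraction --- a uniform bound on the distortion of $f^n$ between the corresponding subdisk of $D$ and $W_n(x)$.

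Next I would fix an admissible disk $\Delta$ with $\Leb_\Delta(\{x\in\Delta:\ \overline m_F(x,f)>a\})>0$ (by Fubini, foliating a piece of $U$ by admissible disks, almost every leaf has this property whenever $\Leb(\{x\in U:\ \overline m_F(x,f)>a\})>0$) and take a weak-$*$ accumulation point $\mu$ of $\mu_n=\frac1n\sum_{j=0}^{n-1}f^j_*\Leb_\Delta$. Along the defining subsequence a definite proportion of the mass of $\mu_n$ sits on the hyperbolic‑time disks $W_j$, which are admissible disks of inner radius $\ge\delta_1$ carrying normalized densities pinched between two positive constants; in the limit this forces $\mu$ to have a component of positive mass that disintegrates along local unstable (Pesin) manifolds with absolutely continuous conditionals, i.e.\ a normalized SRB measure. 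By Ledrappier--Young it satisfies \eqref{Pesinformula}; since mostly expanding forces its $F$-exponents positive and $E$ is uniformly contracted, it is hyperbolic. Its ergodic components inherit the absolutely continuous unstable conditionals and the signs of the exponents, so almost every one of them is an ergodic hyperbolic SRB measure, and by Pesin theory each has a basin of positive Lebesgue measure.

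For the basin covering, let $\mathcal{B}$ be the union of the basins of all ergodic hyperbolic SRB measures of $f$, an $f$-invariant set. I would prove $\Leb(\{\overline m_F>a\}\setminus\mathcal{B})=0$ by contradiction: otherwise some admissible disk $D$ satisfies $\Leb_D(D\cap Z)>0$ with $Z=\{\overline m_F>a\}\setminus\mathcal{B}$; note $Z$ is forward invariant (the function $\overline m_F(\cdot,f)$ is $f$-invariant and $\mathcal{B}$ is fully invariant) and carries no SRB measure (an ergodic SRB charging it would give full measure to $Z$, yet also to its own basin $\subset\mathcal{B}$). Choose a $\Leb_D$-density point $x_0$ of $D\cap Z$ and $\sigma$-hyperbolic times $n_k\to\infty$ of $x_0$. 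By bounded distortion $f^{n_k}x_0$ is a density point of $f^{n_k}(D\cap Z)\cap W_{n_k}(x_0)$ with density tending to $1$; and since $W_{n_k}(x_0)$ is exponentially backward contracted by $f^{-1},\dots,f^{-n_k}$ with $n_k\to\infty$, a $C^1$-limit $W_\infty$ of these disks is an honest piece of unstable manifold (a $cu$-disk uniformly contracted by every backward iterate), of inner radius $\ge\delta_1$, along which a $\Leb_{W_\infty}$-full set consists of points lying, for large $k$, in $f^{n_k}(D\cap Z)$, hence outside $\mathcal{B}$, and still satisfying $\overline m_F>a$. But $W_\infty$, being a genuine unstable plaque, carries an absolutely continuous conditional for each ergodic hyperbolic SRB measure, and --- via the absolute continuity of the strong stable foliation tangent to $E$ and the fact that unstable manifolds of such measures are themselves admissible $cu$-disks --- $\Leb_{W_\infty}$-almost every point of $W_\infty$ lies in some such basin, contradicting the previous sentence. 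This covers $\{\overline m_F>0\}$ by the (at most countably many) basins of all ergodic hyperbolic SRB measures. Finally, for fixed $a$ only finitely many of them meet $\{\overline m_F>a\}$ in positive Lebesgue measure: each carries inside its support an admissible $cu$-disk of inner radius $\ge\delta_1(a)/2$ along which it has absolutely continuous conditionals with density bounded below by a uniform $c(a)>0$, and two such disks that come $\delta_1(a)/3$-close would force the two measures to coincide (run the push‑forward construction at a point near the overlap), so their number is bounded by a covering number of $\Lambda$ at scale $\delta_1(a)$; intersecting those finitely many basins with $\{\overline m_F>a\}$ gives the second assertion, and $\{\overline m_F>0\}=\bigcup_m\{\overline m_F>1/m\}$ gives the first.

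The step I expect to be the main obstacle is the endgame just described: upgrading ``a definite fraction of pushed‑forward mass sits on good unstable disks, hence some ergodic hyperbolic SRB measure exists'' to the \emph{pointwise} statement that Lebesgue‑almost every $x$ with $\overline m_F(x,f)>a$ actually lies in one of the basins. Since those basins are neither open nor closed, soft weak-$*$ arguments are insufficient and one must track individual orbits; this is exactly what the density‑point argument on admissible disks, bounded distortion at hyperbolic times, and the observation that a $C^1$-limit of hyperbolic‑time disks is a true unstable plaque (on which Pesin theory and the absolute continuity of $W^{ss}$ apply) are designed to do, and making that final comparison rigorous --- together with the finiteness bound at scale $\delta_1(a)$ --- is where the real work lies.
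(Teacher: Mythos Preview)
This theorem is quoted from \cite{ABV00,ADLP}; the present paper recovers it as the special case of Corollary~\ref{corConeplus} with $E$ uniformly contracting (so $\overline a=-\infty$) together with Theorem~\ref{Thm:ergodic-finiteness-away-from-zero}. Your route is the classical geometric one of the original references (push forward $\Leb_\Delta$, use hyperbolic-time disks with bounded distortion), whereas the paper constructs SRB measures by an entropic/F\o lner method (Theorem~\ref{Thm:construction} and Theorem~\ref{Thm:volume-estimate}) and proves the basin covering through dynamical density points at Pliss times (Theorem~\ref{Thm:density-point}) combined with an explicit construction of the SRB measure \emph{from the hypothetical exceptional set}. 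The geometric approach buys a more concrete picture of the limiting measure; the entropic approach buys the drop from $\mathcal C^{1+\alpha}$ to $\mathcal C^1$ for mere existence and avoids building a Gibbs--Markov--Young tower.

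There is, however, a genuine gap in your basin-covering step, at the sentence asserting that $\Leb_{W_\infty}$-almost every point of the limit disk $W_\infty$ lies in some basin. You write that $W_\infty$ ``carries an absolutely continuous conditional for each ergodic hyperbolic SRB measure'', but an SRB measure has absolutely continuous conditionals only along \emph{its own} Pesin unstable leaves; nothing in your argument forces any SRB measure to have an unstable plaque close enough to $W_\infty$ for the $W^{ss}$-holonomy to be defined. The accumulation point of $(f^{n_k}x_0)_k$ lies in $\Lambda$, but is not a priori generic for any SRB measure, so invoking basin covering on $W_\infty$ is essentially assuming the conclusion. The paper closes exactly this gap by constructing $\mu$ \emph{from} $Z=\Gamma$: the Pliss-time component $\eta\le\mu$ is built so that points $f^{m_\ell}y_\ell$ with $y_\ell\in\Lambda_\ell\subset Z$ and $m_\ell\in P(y_\ell)$ accumulate on a $\mu$-generic point $x$; then the stable holonomy from $W^u(x)$ (where the geometric SRB property yields a positive-$\Leb_{W^u(x)}$ set of basin points) to the hyperbolic-time disk at $f^{m_\ell}y_\ell$, combined with Corollary~\ref{Cor:density-point-uniform-Egorov}, forces $Z$ to meet the basin.

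A secondary issue: the passage from the $\limsup$ condition $\overline m_F(x)>a$ to ``a definite proportion of the mass of $\mu_n$ sits on hyperbolic-time disks'' hides the main difficulty distinguishing \cite{ABV00} from \cite{ADLP}. Pliss's lemma only gives positive \emph{upper} density of hyperbolic times along a subsequence depending on $x$, so the uniformity over $x\in\Delta$ required for your $\mu_n$-estimate is not available; this is precisely why \cite{ADLP} built a Gibbs--Markov--Young structure and why the paper introduces the F\o lner sets $Q_\ell$ and the Borel--Cantelli sets $\Lambda_\ell$ in Theorem~\ref{Thm:construction}.
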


When the set of points $x$ satisfying (\ref{mostexp}) has positive Lebesgue measure, the partially hyperbolic system $f$ is called mostly expanding. 
The above theorem was proved by Alves, Bonatti and Viana in \cite{ABV00} with the geometrical approach, when the ``limsup'' is replaced by the ``liminf'' .  The  present statement is due to Alves,  Dias, Pinheiro and Luzzato \cite{ADLP}.  They assume that  the diffeomorphism is  $\mathcal  C^{1+\alpha}$ and the contracting bundle is necessary because they used the geometric properties of the SRB measures and  \emph{Gibbs-Markov-Young} structure.

In many  recent works  the variational property was used  to construct SRB measure \cite{BCS23,CCE15,CYZ20,CaY16,LeY17,Qiu11, Bur21}.  In this paper, we will show the existence of SRB measures for $\mathcal C^1$ mostly expanding partially hyperbolic diffeomorphisms by using  entropic methods. This avoids the construction of the Gibbs-Markov-Young structure and proves the existence of SRB measures for a larger class of diffeomorphisms.

%

Before stating our main results we introduce the lower empirical exponent $\chi^F_{\mathrm{min}}$ inside $F$ as follows : $$\forall x\in U,\ \ \ \chi^F_{\mathrm{min}}(x,f):= \lim_{p\to\infty}\frac{1}{p}\limsup_{n\to+\infty}\frac{1}{n}\sum_{i=0}^{n-1}\log m\left(Df^p|_{F(f^ix)}\right).$$

Contrarily to the quantity  $\overline{m}_F(x,f)$, the exponent $\chi^F_{\mathrm{min}}$ does not depend on the choice of the Riemannian structure and it is homogenous in $f$, i.e. $\forall p\in \NN, \ \ \chi^F_{\mathrm{min}}(x,f^p)=p\chi^F_{\mathrm{min}}(x,f)$.  When there is no confusion on the map $f$,  we just let  $\chi^F_{\mathrm{min}}(x)$ for $ \chi^F_{\mathrm{min}}(x,f)$.  
We always have  $\chi^F_{\mathrm{min}}(x,f)\leq \limsup_n \frac{1}{n}\log m(Df^n|_{F}(x))$, but this last  inequality is  a priori strict for a general  point $x$.  When $\mu$ is an ergodic measure,  $\mu$ a.e.  point $x$ satisfies  $$\chi^F_{\mathrm{min}}(x,f)=\lim_n \frac{1}{n}\log m(Df^n|_{F(x)})=\lambda_{\mathrm{dim}(F)}(x)=\int \lambda_{\mathrm{dim}(F)}\, d\mu.$$ Moreover it follows from the definitions that $\overline{m}_F(x,f)\leq \chi^F_{\mathrm{min}}(x) $.   The equality does not hold in general, even for typical points $x$ with respect to invariant measures. 
Let $pw(x)$ be the set of empirical measures from $x$, that is the set of limits for the weak-$*$ topology, when $n$ goes to infinity,  of the sequence $\mu_x^n:=\frac{1}{n}\sum_{0\leq k<n }\delta_{f^kx}$, $n\in \NN$.  The exponent $\chi^F_{\mathrm{min}}(x,f)$ is related with  the Lyapunov exponent of empirical measures as follows (see  the appendix):
\begin{equation}\label{eq:empexp}\chi^F_{\mathrm{min}}(x,f) =\sup_{\mu\in pw(x)}\int \lambda_{\mathrm{dim}(F)}\, d\mu.
\end{equation}
For all these reasons we prefer to state our main results by using  $\chi^F_{\mathrm{min}}$,  but this is mostly a question of presentation.  Indeed one sees easily 
$$\left\{\chi^F_{\mathrm{min}}(\cdot,f)>0\right\}=\bigcup_{p\in \NN^*}\bigcup_{0=k<p}f^{-k}\left\{\overline{m}_{F}(\cdot,f^p)>0\right\}.$$  Therefore by applying the above theorem to the powers of $f$, we get in the same settings the apparently stronger result,  that Lebesgue a.e.  $x\in U$ with  $\chi^F_{\mathrm{min}}(x,f)>0$ 
lies in the basin of an ergodic hyperbolic SRB measure.  When there is no confusion on $f$, we just  write $\overline{m}_F(x)$ and $\chi^F_{\mathrm{min}}(x)$ for $\overline{m}_F(x,f)$ and $\chi^F_{\mathrm{min}}(x,f)$ respectively.

\begin{theoremalph}\label{Thm:dominated-entropy-formula}
Assume that $\Lambda=\bigcap_{n\in \NN}f^nU$ is an attractor  of  a $\mathcal C^{1}$ diffeomorphism $f$ with a dominated splitting $T_\Lambda M=E\oplus  F$.  

If 
$${\rm Leb}\left(\left\{x\in U:~\chi^{F}_{\mathrm{min}}(x)>a\right\}\right)>0 \text{ for some }a\geq 0,$$
then there is an ergodic  measure $\mu$ supported on $\Lambda$ such that

 \begin{equation*}\int \lambda_{\mathrm{dim}(F)}\, d\mu>a
\end{equation*}
and  \begin{equation*}h_\mu(f)\ge \int\log{\rm Jac}(Df|_{F}) d\mu.
 \end{equation*}

\end{theoremalph}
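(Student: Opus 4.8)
The plan is variational: transport the Lebesgue measure carried by a disc tangent to the ``$F$--cone'' by the dynamics, use the Pliss lemma to make the $F$--exponent $>a$ at almost every point of the resulting measure, and extract an ergodic measure whose metric entropy dominates $\int\log\Jac(Df|_F)$; the one genuinely hard step is this entropy lower bound. \emph{Reduction to a good disc.} Replacing $U$ by $f^N(U)$ for $N$ large changes neither $\Lambda$ nor (since $\{\chi^F_{\mathrm{min}}>a\}$ is $f$--invariant) the hypothesis, so I may assume $U$ carries a continuous strictly $Df$--invariant cone field $\cC^F$ around $F$ and that $f^n(\overline U)\downarrow\Lambda$. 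Putting $\psi_p:=\log m(Df^p|_F)$ (continuous, hence bounded, on $\overline U$), the identity $\chi^F_{\mathrm{min}}(x)=\lim_p\frac1p\limsup_n\frac1n\sum_{i<n}\psi_p(f^ix)$ together with the hypothesis give $p,q\in\NN$ with $\Leb(A)>0$, where $A:=\{x\in U:\tfrac1p\limsup_n\tfrac1n\sum_{i<n}\psi_p(f^ix)>a+\tfrac1q\}$. Foliating a small open subset of $U$ by $C^1$ discs tangent to $\cC^F$ and applying Fubini, I obtain one such disc $D\subset U$ with $Z:=\Leb_D(A\cap D)>0$; fix $a<a'<a''<a+\tfrac1q$ and set $m_0:=Z^{-1}\Leb_D|_{A\cap D}$.

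\emph{The invariant measure.} For $m_0$--a.e.\ $x$ one has $\limsup_n\frac1n\sum_{i<n}\psi_p(f^ix)>pa''$; the Pliss lemma (constants $pa'<pa''\le\sup\psi_p$) provides, for each large $N_0$, a set of times $n$ of density bounded below along which $\frac1m\sum_{j<m}\psi_p(f^{n+j}x)\ge pa'$ for all $m$ in a range increasing with $N_0$. Pushing $\Leb_D$ forward along such times, averaging over $x\in A\cap D$, passing to a weak-$*$ limit, and (if necessary) replacing it by a weak-$*$ limit of its Birkhoff averages, I obtain an $f$--invariant probability $\mu$ supported on $\Lambda$, which is a weak-$*$ limit of convex combinations of measures $f^k_\ast\varrho$ with $\varrho$ absolutely continuous and carried by $D$, and which is carried by a set on which every forward ergodic average of $\psi_p$ is $\ge pa'$. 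On that set a short super-additivity computation gives $\liminf_n\frac1n\log m(Df^n|_F)\ge a'$; as this bound is $f$--invariant it descends to every ergodic component, so $\int\lambda_{\dim F}\,d\nu\ge a'>a$ for \emph{every} ergodic component $\nu$ of $\mu$. (Using the hyperbolic times rather than full orbit segments is exactly what delivers this \emph{almost sure}, not merely integrated, lower bound on the $F$--exponent, and this is what will make the ergodic decomposition below work.)

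\emph{The entropy bound (the main obstacle).} It remains to prove $h_\mu(f)\ge\int\log\Jac(Df|_F)\,d\mu$, which I would deduce from a ``reverse Ruelle inequality along cone--tangent discs'': any weak-$*$ limit of convex combinations of pushforwards $f^k_\ast\varrho$ of absolutely continuous measures $\varrho$ carried by discs tangent to $\cC^F$ satisfies this inequality. The mechanism: $f^n$ expands such a disc, so the $\Leb_D$--mass of the preimage in $D$ of a Bowen $(n,\e)$--ball is $\lesssim\e^{\dim F}\exp\!\big(-\sum_{i<n}\log\Jac(Df|_{F(f^i\cdot)})\big)$---here one replaces $\Jac(Df^n|_{TD})$ by $\Jac(Df^n|_F)$ using that the domination thins the iterated cone at a uniform, hence subexponential, rate, so the two Jacobians have the same exponential growth. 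Feeding this into the entropy of the dynamical refinement of a fine partition restricted to $D$, and transferring the estimate to $\mu$ by means of the concavity and upper semicontinuity of $\nu\mapsto H_\nu(\cdot)$ and the classical inequality $H_{\frac1N\sum_{k<N}f^k_\ast\varrho}(\mathcal P^n)\ge\frac nN H_\varrho(\mathcal P^N)-O(n^2\log\#\mathcal P/N)$, produces the bound. The point that makes this work in the merely $C^1$ category---and this is the gain of the variational over the geometric approach---is that only the \emph{dynamical}, not the metric, geometry of $f^n(D)$ enters, so no Yomdin-type control of $f^n(D)$ is needed.

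\emph{Conclusion via ergodic decomposition.} Both $\nu\mapsto h_\nu(f)$ and $\nu\mapsto\int\log\Jac(Df|_F)\,d\nu$ are affine over the ergodic decomposition $\mu=\int\nu\,d\rho(\nu)$, so $\int\big(h_\nu(f)-\int\log\Jac(Df|_F)\,d\nu\big)\,d\rho\ge0$, whence a $\rho$--positive set of ergodic components of $\mu$ satisfies $h_\nu(f)\ge\int\log\Jac(Df|_F)\,d\nu$. Since by the previous step \emph{every} ergodic component already satisfies $\int\lambda_{\dim F}\,d\nu>a$, any $\nu$ in that positive--measure set is an ergodic measure on $\Lambda$ with $\int\lambda_{\dim F}\,d\nu>a$ and $h_\nu(f)\ge\int\log\Jac(Df|_F)\,d\nu$, as required. $\square$
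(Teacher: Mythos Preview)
Your overall strategy---pick an $F$-cone disc $D$, use Pliss times to force the $F$-exponent above $a$ on every ergodic component, and then pass to an ergodic component where the entropy inequality survives---is exactly the paper's. The gap is in the entropy step, and it is a genuine one, not just a matter of detail.

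You propose to combine (i) a Bowen-ball estimate $\Leb_D(B_n(x,\e)\cap D)\lesssim \e^{\dim F}\exp(-\sum_{i<n}\log\Jac(Df|_{F(f^ix)}))$ with (ii) the Misiurewicz-type inequality for $\tfrac1N\sum_{k<N}f^k_\ast\varrho$. Both pieces are problematic. For (i), the cone-thinning argument you give only compares $\Jac(Df^n|_{TD})$ with $\Jac(Df^n|_F)$; it does \emph{not} bound $\Leb_{f^nD}(f^n(B_n(x,\e)\cap D))$ by $\e^{\dim F}$. The image disc $f^nD$ can fold back through a small ball many times, so one must show that the Bowen ball sees a single sheet. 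The paper does this only at Pliss times, using the bounded-geometry lemma (their Lemma~\ref{Lem:Pliss-iterate}) together with a disjointness lemma for $F$-disks (Lemma~\ref{disjoint}); without the Pliss restriction the estimate is not available. For (ii), the inequality you quote is for the \emph{full} Cesàro average $\tfrac1N\sum_{k<N}f^k_\ast\varrho$, whereas your $\mu$ is built by ``pushing along Pliss times'' (this is what makes the $\lambda_{\dim F}>a$ bound hold \emph{almost surely}, as you correctly emphasize). These two constructions are incompatible: if you revert to full Cesàro averages to run (ii), you lose the a.s.\ Lyapunov bound (you only have a $\limsup$ hypothesis), and then the ergodic-decomposition argument in your last paragraph collapses. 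The sentence ``(if necessary) replacing it by a weak-$*$ limit of its Birkhoff averages'' does not repair this.

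The paper's resolution is precisely to reconcile these two demands via a F{\o}lner construction: one builds F{\o}lner sets $Q_\ell\subset[0,n_\ell)$ whose boundary points are Pliss times for every $x$ in a set $\Lambda_\ell\subset D$ of \emph{not exponentially small} $\Leb_D$-measure (the Borel--Cantelli step you omit, needed because of the $\limsup$), and proves a \emph{F{\o}lner Gibbs property} $\Leb_D(\cA^{Q_\ell}(x)\cap\Lambda_\ell)\le e^{\varepsilon\#Q_\ell}\exp(-\sum_{i\in Q_\ell}\log\Jac(Df|_{F(f^ix)}))$ by induction on the number of intervals in $Q_\ell$, using bounded geometry at the Pliss endpoints to control the multiple sheets. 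The F{\o}lner structure gives invariance of the limit, the Pliss-filling gives the a.s.\ Lyapunov bound, and the Gibbs bound feeds into the entropy machinery of \cite{Bur21} to produce $h_\mu(f)\ge\int\log\Jac(Df|_F)\,d\mu$. This F{\o}lner Gibbs property is the technical heart of the argument and is what your sketch is missing.
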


If the unstable index $i_u(\mu):=\sharp\left\{i, \ \int \lambda_i\, d\mu>0\right\}$ of $\mu$ satisfies moreover $i_u(\mu)\leq \mathrm{dim}(F)$,  then $\mu$  is an SRB measure.   In particular, this is the case when  $a\geq \underline{a}$ with 
$$\underline{a}:=\sup\left\{\int \lambda_{\mathrm{dim}(F)}\, d\mu \ : \ \mu \in \underline{\mathcal M}\right\}$$
and 
$$\underline{\mathcal M}:=\left\{\mu  \text{ ergodic:  }h_\mu(f)\ge \int\log{\rm Jac}(Df|_{F}){\rm d}\mu \text{ and } i_u(\mu)>  \mathrm{dim}(F)\right\}.$$

\begin{corollary-main}\label{corCone}Assume that $\Lambda=\bigcap_{n\in \NN}f^nU$ is an attractor  of  a $\mathcal C^{1}$ diffeomorphism $f$ with a dominated splitting $T_\Lambda M=E\oplus F$.

  If 
$${\rm Leb}\left(\left\{x\in U :~\chi^{F}_{\mathrm{min}}(x)>\max(\underline{a},0)\right\}\right)>0,$$
 then $f$ has an ergodic  SRB measure $\mu$ supported on $\Lambda$ with $\int \lambda_{\mathrm{dim}(F)}\, d\mu >\max(\underline{a},0)$.
 \end{corollary-main}

Under the conditions of Corollary~\ref{corCone}, if the center-unstable index  $i_{cu}(\mu):=\sharp\left\{i, \ \int \lambda_i\, d\mu\geq 0\right\}$ of $\mu$ satisfies $i_{cu}(\mu)\leq \mathrm{dim}(F)$,  the measure  $\mu$ is an  ergodic  hyperbolic SRB measure. When $f$ is $\mathcal C^{1+\alpha}$, its basin has then positive Lebesgue measure.   We let  now 
$$(\underline{a}\leq ) \ \overline{a}:=\sup\left\{\int \lambda_{\mathrm{dim}(F)}\, d\mu \ : \ \mu\in \overline{\mathcal M}\right\}$$
and 
$$\overline{\mathcal M}:=\left\{ \mu \text{ ergodic:  }h_\mu(f)\ge \int\log{\rm Jac}(Df|_{F})d\mu \text{ and  } i_{cu}(\mu)>  \mathrm{dim}(F)\right\}. $$ 
 Such a \emph{physical} measure  $\mu$  exists whenever the set $\left\{x\in U :~\chi^{F}_{\mathrm{min}}(x)>\max(\overline{a},0)\right\}$ has positive Lebesgue measure.  Moreover, the basins of these measures cover this set Lebesgue almost everywhere:
 \begin{corollary-main}\label{corConeplus}
Assume that $\Lambda=\bigcap_{n\in \NN}f^nU$ is an attractor  of  a $\mathcal C^{1+\alpha}$ diffeomorphism $f$ with a dominated splitting $T_\Lambda M=E\oplus F$.  

Then Lebesgue a.e.   $x\in U$  with 
$\chi^{F}_{\mathrm{min}}(x)>\max(\overline{a},0)$ lies in the basin of an ergodic hyperbolic SRB measure $\mu$ supported on $\Lambda$ with $\int \lambda_{\mathrm{dim}(F)}\, d\mu >\max(\overline{a},0)$.

 \end{corollary-main}

The following properties implies various versions of the above corollaries under stronger assumptions: 
\begin{enumerate}[i.]
\item if $E$ is uniformly contracting, then $\overline{\mathcal M}=\emptyset$  (thus $\overline{a}=-\infty$),
\item if $\lim_n\sup_{x\in \Lambda}\frac{1}{n}\log\| D_xf^n|_{E}\|\leq 0$, then $\underline{\mathcal M}=\emptyset$ (thus $\underline{a}=-\infty$), 
\item if $\mathrm{dim}(E)=1$, then $\overline{a}\leq 0$,  
\item ($\Rightarrow $iii.) \ $\overline{a}\leq \frac{\Lambda_{\mathrm{dim}(E)-1}(f^{-1})}{\mathrm{dim}(F)}$.
\end{enumerate}
The two first items follow straightforwardly from the definitions. Let us justify the last item. 
Recall that $\Lambda_k(f):=\max_{l\leq k}\lim_{n\rightarrow +\infty}\sup_{x\in M}\frac{1}{n}\log \|\Lambda^{l}Df^n(x)\|$, where  $\Lambda^{l}Df$ denotes the map induced by $f$ on the $l$-exterior power of the tangent bundle.  By Ruelle inequality \cite{Rue78}, any ergodic measure $\mu$ with $i_{cu}(\mu)>\mathrm{dim}(F)$ satisfies $h_\mu(f) =h_{\mu}(f^{-1})\leq \Lambda_{\mathrm{dim}(E)-1}(f^{-1})$.  If $\mu$ satisfies moreover $ h_\mu(f)\ge \int\log{\rm Jac}(Df|_{F}){\rm d}\mu$, then we have $\int \lambda_{\mathrm{dim}(F)}\, d\mu \leq \frac{h_\mu(f)}{\mathrm{dim}(F)}\leq  \frac{\Lambda_{\mathrm{dim}(E)-1}(f^{-1})}{\mathrm{dim}(F)}$.  \\

In the first context (i),  Corollary \ref{corConeplus} recovers the aforementionned result  of Alves,  Dias,   Pinheiro and Luzzato \cite{ADLP}.    Existence of SRB measures for dominated splittings satisfying ${\rm Leb}\left(\left\{x:~\chi^{F}_{min}(x)>0\right\}\right)>0$ in the last two contexts (ii) and (iii),  where existence of  Gibbs-Markov-Young structure is not known,  is new. \\

We also investigate the finiteness of ergodic SRB measures in the $\mathcal C^{1+\alpha}$ setting.   

\begin{theoremalph}\label{Thm:ergodic-finiteness-away-from-zero}
Assume that $\Lambda=\bigcap_{n\in \NN}f^nU$ is an attractor  of  a $\mathcal C^{1+\alpha}$ diffeomorphism $f$ with a dominated splitting $T_\Lambda M=E\oplus F$.  
 For any $a>0$,  there are only finitely many ergodic SRB supported on $\Lambda$ satisfying 
$$\int \log m(Df|_F) \, d\mu >a \text{ and }\int \log \|Df|_E\| \, d\mu <-a.$$
\end{theoremalph}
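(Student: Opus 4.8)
\emph{Strategy.} The plan is to show that every ergodic SRB measure $\mu$ satisfying the two integral conditions is \emph{uniformly} hyperbolic along $E\oplus F$, with a gap controlled by $a$ alone, and then to run a uniformized version of Pesin's basin argument so that each such $\mu$ has a basin of Lebesgue measure at least $\rho_0=\rho_0(a,f)>0$; finiteness is then immediate since basins of distinct ergodic measures are disjoint. For the uniform hyperbolicity, let $\mu$ be ergodic SRB, supported on $\Lambda$, with $\int\log m(Df|_F)\,d\mu>a$ and $\int\log\|Df|_E\|\,d\mu<-a$. Submultiplicativity of the conorm, $m(Df^n|_{F(x)})\ge\prod_{i=0}^{n-1}m(Df|_{F(f^ix)})$, together with the Birkhoff ergodic theorem gives $\lambda_{\dim(F)}(\mu)=\lim_n\tfrac1n\log m(Df^n|_{F})\ge\int\log m(Df|_F)\,d\mu>a$; dually, submultiplicativity of the operator norm on $E$ gives $\lim_n\tfrac1n\log\|Df^n|_E\|\le\int\log\|Df|_E\|\,d\mu<-a$. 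Hence every Lyapunov exponent of $\mu$ along $F$ exceeds $a$ and every one along $E$ lies below $-a$; in particular the Oseledets unstable/stable bundles coincide $\mu$-a.e.\ with $F$/$E$, and, $f$ being $\mathcal C^{1+\alpha}$, Pesin theory provides stable and unstable manifolds tangent to $E$ and $F$ with absolutely continuous conditional measures of $\mu$ along the unstable ones.

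\emph{Step 2 (a uniform lower bound for $\Leb(\Basin(\mu))$).} The key is to replace the non-uniform Pesin blocks by hyperbolic-time sets. Since $-\log m(Df|_F)$ is bounded near $\Lambda$ and its Birkhoff averages are $\mu$-a.e.\ equal to $-\int\log m(Df|_F)\,d\mu<-a$, the Pliss lemma gives a density $\theta_F=\theta_F(a,f)>0$ of \emph{$F$-hyperbolic times} along $\mu$-a.e.\ orbit; at such a time the orbit contracts uniformly along $F$ under backward iteration across all intermediate scales, so, using the uniformity of the splitting $E\oplus F$ over $\Lambda$ (fixed cone fields, no small angles), the point carries an embedded unstable disk tangent to the $F$-cone of inner radius $\ge\delta_0=\delta_0(a,f)>0$ on which the inverse iterates enjoy bounded distortion. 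Symmetrically, $\int\log\|Df|_E\|\,d\mu<-a$ yields a density $\theta_E=\theta_E(a,f)>0$ of \emph{$E$-hyperbolic times}, at which the point has a local stable disk tangent to the $E$-cone of inner radius $\ge\delta_0$ (after shrinking $\delta_0$). Disintegrating $\mu$ along the unstable lamination through a positive-measure set of $F$-hyperbolic times, one selects a single unstable disk $D$ of radius $\ge\delta_0$ such that (i) the normalized conditional $\mu_D$ has density with respect to $\Leb_D$ bounded above and below by uniform constants — the bounded-distortion estimate at hyperbolic times, using that the SRB conditional density along an unstable leaf is an infinite product of Jacobian ratios that converges with uniform bounds along a hyperbolic-time disk — and (ii) $\mu_D(G)\ge\theta_E/2$, where $G\subset D$ is the set of points that are $E$-hyperbolic times and are $\mu$-generic. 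By (i), (ii) forces $\Leb_D(G)\ge\kappa_0>0$ with $\kappa_0=\kappa_0(a,f)$. For $z\in G$ one has $W^s_{\mathrm{loc}}(z)\subset\Basin(\mu)$, with $W^s_{\mathrm{loc}}(z)$ of inner radius $\ge\delta_0$ and uniformly transverse to $D$; a Fubini-type estimate applied to the product of $G$ with these stable disks then gives $\Leb(\Basin(\mu))\ge\rho_0(a,f)>0$.

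\emph{Step 3 (conclusion) and the main difficulty.} Basins of distinct ergodic invariant measures are pairwise disjoint and all measures in question are carried by $\Lambda\subset U$, so if $\mu_1,\dots,\mu_N$ are distinct ergodic SRB measures on $\Lambda$ with $\int\log m(Df|_F)\,d\mu_j>a$ and $\int\log\|Df|_E\|\,d\mu_j<-a$ for all $j$, then $N\rho_0\le\sum_{j}\Leb(\Basin(\mu_j))\le\Leb(U)$, whence $N\le\Leb(U)/\rho_0(a,f)$, as claimed. The main obstacle is Step 2: making $\rho_0$ depend only on $a$ and $f$ requires running the stable/unstable manifold and absolute-continuity machinery with constants drawn solely from the Pliss/hyperbolic-time estimates and the uniform domination, rather than from a.e.-defined Pesin charts — concretely, the bounded-distortion control of the conditional density of $\mu$ on a hyperbolic-time unstable disk, and the simultaneous use of $F$- and $E$-hyperbolic times to build a genuinely $d$-dimensional piece of the basin of uniform size. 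The $\mathcal C^{1+\alpha}$ hypothesis is essential for these steps.
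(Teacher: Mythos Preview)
Your outline has a real gap at the step you yourself single out: claim~(ii), that an unstable $\delta_0$-disk $D$ centred at an $F$-hyperbolic point can be chosen with $\mu_D(G)\ge\theta_E/2$. Averaging the conditionals of the $E$-hyperbolic set over \emph{all} atoms of an unstable measurable partition does recover $\mu(P^E)\ge\theta_E$, but the atoms through $F$-hyperbolic points form only a subfamily, and nothing you wrote excludes that $P^E(\gamma)$ is carried by unstable leaves which never meet $P^F(\gamma)$; in that case every uniformly large disk you can build misses $G$ entirely, and no uniform $\kappa_0$ exists. The fix uses the domination in an essential way: after passing to an adapted metric with $\|Df|_{E(x)}\|\cdot\|Df^{-1}|_{F(fx)}\|\le\lambda<\gamma^2$, the paper's ``bi-Pliss'' Lemma shows that along a $\mu$-typical orbit the visiting times to $P^F(\gamma)$ and to $P^E(\gamma)$ are \emph{the same set of times}, hence $\mu\bigl(P^F(\gamma)\cap P^E(\gamma)\bigr)=\mu\bigl(P^F(\gamma)\bigr)=\mu\bigl(P^E(\gamma)\bigr)$. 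With this in hand your programme can indeed be completed (the maximal ergodic theorem even gives $\mu(P^F(\gamma))\ge\theta_F(a,f)$ uniformly, and your Steps~2--3 then go through), but without it step~(ii) is unjustified.

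Once bi-Pliss is available, the paper takes a different and lighter route than yours. Rather than seeking a uniform $\rho_0$, it attaches to each admissible $\mu$ a point $x_\mu\in P^F(\gamma)$ with $\Leb_{W^u_\delta(x_\mu)}\bigl(P^E(\gamma)\cap\mathcal B(\mu)\bigr)>0$ and argues by compactness: if infinitely many such measures existed, two of the points $x_{\mu_n},x_{\mu_m}$ would be close enough that the stable holonomy (absolutely continuous with Jacobian uniformly bounded on $P^E(\gamma)$) carries positive $\Leb_{W^u_\delta(x_{\mu_n})}$-mass into $\mathcal B(\mu_m)$, forcing $\mu_n=\mu_m$. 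This accumulation argument needs only \emph{positivity} at each stage, never the uniform lower bounds your approach requires.
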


The finiteness property in the aforementioned theorem of Alves, Dias, Pinheiro and Luzzato 
follows then from Theorem \ref{Thm:ergodic-finiteness-away-from-zero}.  When ${\rm dim}(E)=1$ with $E$ not necessarily contracting,  we can also conclude that for any $a$ the set $\{x\in U, \ \overline{m}_F(x)>a\}$ is covered by the basin of finitely many ergodic SRB measures. Indeed if $x\in U$ with  $\overline{m}_F(x)>a$ lies in the basin of an ergodic SRB measure $\mu$ we have $h_\mu(f)\geq (\dim F) \int \log m(Df|_F)\, d\mu>(\dim F) a$ and  $\int \lambda_{d}\, d\mu=\int \log \|Df|_E\| \, d\mu$ as ${\rm dim}(E)=1$ so that Ruelle inequality yields 
$$(\dim F)a<h_{\mu}(f^{-1})\leq -\int \log \|Df|_E\| \, d\mu.$$  However we do not expect the finiteness property if one works with $\chi_{{\rm min}}^F$ rather than $\overline{m}_F$. \\

The above results  also holds true for $\mathcal C^1$-local diffeomorphisms.   A $\mathcal C^1$ map $f:~M\to M$ is said to be a \emph{$\mathcal C^1$-local diffeomorphism} if for any $x\in M$, there is a neighborhood $V$ of $x$ such that $f$ is a diffeomorphism from $V$ to its image.  In this context,  an invariant measure $\mu$ is said to be an \textit{expanding  SRB measure} when its Lyapunov exponents are all positive and $\mu$ satisfies the entropy formula (\ref{Pesinformula}). We refer to \cite{QiZ02} for   a version of Ledrappier-Young's theorem for $\mathcal C^{2}$ local diffeomorphisms. 
For a local diffeomorphism, one defines
$$\chi_{{\rm min}}(x):=\lim_p\frac{1}{p}\limsup_{n\to\infty}\frac{1}{n}\sum_{i=0}^{n-1}\log m(Df^p(f^ix))\ \left(=\sup_{\mu\in pw(x)} \int \lambda_d \, d\mu\right).$$

\begin{theoremalph}\label{Thm:local}
Assume that $f$ is a $\mathcal C^1$-local diffeomorphism. 
If 
$${\rm Leb}\left(\left\{x\in U:~\chi_{{\rm min}}(x)>a\right\}\right)>0 \text{ for some }a\geq 0,$$
 then $f$ has an ergodic SRB measure with $\int \lambda_i d\mu>a$ for any $i$. \\
 
 Moreover, if $f$ is $\mathcal C^{2}$, 
then Lebesgue a.e.   $x\in M$  with 
$\chi_{{\rm min}}(x)>0$ lies in the basin of an expanding  SRB measure. 
\end{theoremalph}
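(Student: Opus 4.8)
\emph{Strategy.} The statement is the special case of Theorem~\ref{Thm:dominated-entropy-formula} and Corollary~\ref{corConeplus} in which the contracting bundle $E$ is trivial and $F=TM$. Then ${\rm Jac}(Df|_F)=|\det Df|$, $\dim F=d$, and the index inequalities $i_u(\mu)\le\dim F$ and $i_{cu}(\mu)\le\dim F$ hold automatically, so $\underline{\mathcal M}=\overline{\mathcal M}=\emptyset$ and $\underline a=\overline a=-\infty$. A $\mathcal C^1$ local diffeomorphism of a compact manifold is a covering map, in particular surjective, so one takes $U=M$ and then $\Lambda=\bigcap_n f^nM=M$. The plan is to rerun the proof of Theorem~\ref{Thm:dominated-entropy-formula} — which is an entropic argument using only forward iteration of $f$, Ruelle's inequality, and the entropic volume estimates, none of which requires $f$ to be invertible, and which becomes simpler here since no domination is needed — and then the proof of Corollary~\ref{corConeplus}, with the Ledrappier--Young theorem replaced by its version for $\mathcal C^2$ local diffeomorphisms due to Qian and Zhu \cite{QiZ02}.

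\emph{Existence ($\mathcal C^1$).} Running the proof of Theorem~\ref{Thm:dominated-entropy-formula} with $F=TM$ on the positive-Lebesgue-measure set $\{x\in M:\chi_{{\rm min}}(x)>a\}$ (recall $\chi_{{\rm min}}(x)=\sup_{\nu\in pw(x)}\int\lambda_d\,d\nu$) yields an ergodic invariant measure $\mu$ with
\begin{equation*}
\int\lambda_d\,d\mu>a\qquad\text{and}\qquad h_\mu(f)\ge\int\log|\det Df|\,d\mu .
\end{equation*}
By ergodicity, $\lambda_d(x)=\int\lambda_d\,d\mu>a\ge0$ for $\mu$-a.e.\ $x$, so all $d$ Lyapunov exponents of $\mu$ are strictly positive and in particular $\int\lambda_i\,d\mu\ge\int\lambda_d\,d\mu>a$ for every $i$. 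Ruelle's inequality \cite{Rue78}, valid for $\mathcal C^1$ maps, then gives $h_\mu(f)\le\int\sum_i\lambda_i\,d\mu=\int\log|\det Df|\,d\mu$, so equality holds throughout and $\mu$ satisfies the entropy formula \eqref{Pesinformula} with only positive exponents. Thus $\mu$ is an expanding SRB measure with $\int\lambda_i\,d\mu>a$ for all $i$. No index hypothesis is needed precisely because $F$ is the full tangent bundle.

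\emph{Basin covering ($\mathcal C^2$).} Assume now $f\in\mathcal C^2$. By the Ledrappier--Young theory for $\mathcal C^2$ local diffeomorphisms \cite{QiZ02}, an expanding SRB measure — having all exponents positive and satisfying \eqref{Pesinformula} — is absolutely continuous, and if ergodic it has a basin of positive Lebesgue measure with the usual Pesin ``unstable saturation'' structure, the unstable disks here being tracked through the finitely many inverse branches of $f$ (equivalently, on the inverse limit). Granting this, the argument of Corollary~\ref{corConeplus} carries over: if the set $B$ of points of $\{x\in M:\chi_{{\rm min}}(x)>0\}$ lying in no such basin had positive Lebesgue measure, one could pick $a>0$ with ${\rm Leb}(\{x\in B:\chi_{{\rm min}}(x)>a\})>0$, run the construction of the previous step with that set in place of $M$, and — exactly as in the proof of Corollary~\ref{corConeplus}, using the absolute continuity supplied by \cite{QiZ02} — obtain an expanding SRB measure whose basin meets $B$ in positive Lebesgue measure, a contradiction. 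Hence ${\rm Leb}(B)=0$, which is the asserted covering property.

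\emph{Main difficulty.} The real content sits in the entropic volume estimate behind Theorem~\ref{Thm:dominated-entropy-formula}, which we take as given and which in the local setting is if anything easier. The one genuinely delicate point is the $\mathcal C^2$ step: for a non-invertible $f$ the ``unstable manifolds'' are not leaves of a foliation complementary to a stable one, and the absolute-continuity and covering machinery must be run through the inverse branches of $f$. This is precisely what \cite{QiZ02} is designed for, so the task reduces to checking that their Ledrappier--Young theorem plugs into the scheme of Corollary~\ref{corConeplus} in place of the classical one — a bookkeeping adaptation rather than a new idea.
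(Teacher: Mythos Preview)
The proposal is correct and matches the paper's own treatment: the paper does not give a separate proof of Theorem~\ref{Thm:local} but presents it as the degenerate case $E=\{0\}$, $F=TM$ of Theorem~\ref{Thm:dominated-entropy-formula} and Corollary~\ref{corConeplus}, with \cite{QiZ02} replacing the classical Ledrappier--Young theorem, exactly as you outline. One small correction to your ``main difficulty'' paragraph: in the expanding endomorphism case the basin step is actually \emph{simpler}, not more delicate --- since all Lyapunov exponents are positive, \cite{QiZ02} gives that the ergodic SRB measure is absolutely continuous with respect to Lebesgue, so the part of the proof of Corollary~\ref{corConeplus} that uses absolute continuity of the stable holonomy to transfer mass from $W^u(x)$ to $f^{m_\ell}D$ is unnecessary; one argues directly with Lebesgue density of the basin and bounded distortion at Pliss times.
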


\subsubsection*{Acknowledgements.}
We are grateful to J. Chen for his help in ergodic theory, for J. Zhang and Z. Mi for their help to improve the presentation.

\section{Construction of SRB measure}
 In \cite{Bur21} the first author introduced an entropic variation of the so-called geometric approach for building SRB measures for $C^r$, $r>1$,  smooth surface diffeomorphisms.  We follow here the same strategy.   A key property to construct SRB measures is the \textit{bounded distorsion  property}, which follows here from the domination property whereas it is ensured by the smoothness  in \cite{Bur21}.

The geometric approach for uniformly hyperbolic systems consists in considering a weak limit of $\left(\frac{1}{n}\sum_{k=0}^{n-1}f_*^k{\rm Leb}_{D_u}\right)_n$, where $D_u$ is a local unstable disk and  ${\rm Leb}_{D_u}$ denotes the normalized Lebesgue measure on $D_u$ induced by its inherited Riemannian structure as a submanifold of $M$.

In this section we prove Theorem A.  Without loss of generality (consider some power of $f$) we may assume that the set $\left\{(x\in U, \ \overline{m}_F(x)>a\right\}$ with $a\geq 0$  has positive Lebesgue measure.  Then, by a standard Fubini argument,  we may consider a smooth disk $D$ tangent to a small cone around the bundle $F$ such that 
$\mathrm{Leb}_{D}\left(\left\{(x\in U, \ \overline{m}_F(x)>a\right\}\right)>0$.  In \cite{ABV00}
 the authors show by using a bounded geometry property (see Lemma \ref{Lem:Pliss-iterate} below)  at Pliss times for $\left(\log m\left(Df|_{F(f^kx)}\right)\right)_k$ that any weak-$*$ limit of 
$\left(\frac{1}{n}\sum_{k=0}^{n-1}f_*^k{\rm Leb}_{D}\right)_n$ admits some ergodic SRB component.  In fact they need to assume  ${\rm Leb}_{D}\left(\left\{(x\in U, \ \underline{m}_F(x)>0\right\}\right)>0$ with $$ \underline{m}_F(x):=\liminf_{n\to+\infty}\frac{1}{n}\sum_{i=0}^{n-1}\log m(Df|_{F(f^ix)}).$$

We build  SRB measures as weak-$*$ limit of   F\o lner empirical measures,  i.e.  limit $\mu$ of a sequence of the form  $\left(\frac{1}{\sharp Q_\ell}\sum_{k\in Q_\ell}f_*^k\mu_{\ell}\right)_\ell$ with $n_\ell\xrightarrow{\ell}+\infty$ such that:
\begin{itemize}
\item $(Q_\ell)_\ell$ with $Q_\ell\subset [0,n_\ell)$ is a F\o lner sequence,  so that the weak limit $\mu$ will be invariant by $F$,
\item for all $\ell$, the measure $\mu_{\ell}=\frac{\Leb_{D}(\cdot)}{\Leb_{D}(\Lambda_\ell)}$ is the probability measure induced by $\Leb_{D}$ on $\Lambda_\ell\subset \{\overline{m}_F>a\} $,  the $\Leb_{D}$-measure of $\Lambda_\ell$ being  not exponentially small in $n_\ell$. 
\item the sets $(Q_\ell)_\ell$ are in some sense  \textit{filled with } the  set  $P(x)$ of Pliss times for $x\in \Lambda_\ell$. Then the measure $\mu$ has only positive Lyapunov exponent inside  $F$.

\end{itemize} 
Somehow,  this set $\Lambda_\ell$ (which follows from a Borel-Cantelli argument) is the price to pay to deal with $\overline{m}_F$ rather than $\underline{m}_F$.   As it has not exponentially small measure,  it will not affect our lower estimate on the entropy of $\mu$.  Finally we check  some F\o lner Gibbs property for $\mu_n$ : for any $\varepsilon>0$ we have for any partition $\mathcal A$ with small enough diameter and for all $\ell$ large enough: 
$$ \forall x\in \Lambda_\ell,  \ \ \mathrm{Leb}_{D}\left(\mathcal A^{Q_\ell}(x)
\cap \Lambda_{\ell} \right) \leq e^{\varepsilon n_\ell} \prod_{k\in Q_\ell }\mathrm{Jac}\left(Df|
_{F(f^kx)}\right),$$
where $\mathcal A^{Q_\ell}(x)$ denotes the atom of the iterated partition $\mathcal A^{Q_\ell}:=\bigvee_{k\in Q_\ell}f^{-k}\mathcal A$.   Then by a classical entropy computation, we conclude that $\mu$ satisfies $h_\mu(f)\ge \int\log{\rm Jac}(Df|_{F}){\rm d}\mu$.

\subsection{Submanifolds tangent to a cone around $F$}

Without loss of generality one may assume that $F$ can be extended continuously on $U$  and that $E$ can be extended continuously and invariantly on $U$.  For any point $x\in U$, for any $\theta>0$,  we define the cone $\cC^{F}_\theta(x)$ as follows:
$$\cC^{F}_\theta(x)=\left\{v=v^{E}+v^{F}\in T_x M:~\|v^{E}\|<\theta\|v^{F}\|\right\}.$$ 
By changing the metric if necessary,
 one can assume that for any $x\in U$,
$$Df(\cC^{F}_\theta(x))\subset \cC^{F}_{\theta/2}(fx).$$
A submanifold $\Delta\subset U$ (maybe with boundary)   is said to be tangent to $\cC^{F}_\theta$ if for any $x\in D$, one has that $T_x \Delta\subset \cC^{F}_\theta(x)$.  If $\Delta$ is tangent to $\cC^{F}_\theta$, then $f(\Delta)$ is tangent to $\cC^{F}_{\theta/2}$.

For a submanifold $\Delta$ we let ${\rm Leb}_\Delta$ and $d_\Delta$ be respectively  the Lebesgue measure and the Riemannian distance   on $\Delta$  inherited from the induced Riemannian structure on $\Delta$.

\begin{Lemma}\label{Lem:estimate-at-bowen-ball}
For any $\varepsilon>0$, there are $\delta_\varepsilon>0$ and $\theta_\varepsilon>0$ such that for any submanifold $\Delta$ tangent to $ \cC^{F}_{\theta_\varepsilon}$ of dimension ${\rm dim}(F)$,  for any $x\in\Delta$, for any $n\in\NN$ and for any subset $\Gamma\subset\left\{y\in\Delta:~d(f^iy,f^ix)<\delta_\varepsilon,~0\le i\le n-1\right\}$,  we have:
$${\rm e}^{-n\varepsilon}{\rm Leb}_{f^n(\Delta)}(f^n(\Gamma))\le {\rm Leb}_\Delta(\Gamma)\prod_{i=0}^{n-1}{\rm Jac}\left(Df|_{F(f^ix)}\right)\le {\rm e}^{n\varepsilon}{\rm Leb}_{f^n(\Delta)}(f^n(\Gamma)).$$
\end{Lemma}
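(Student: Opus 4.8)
The plan is to reduce the statement to a bounded-distortion estimate for the Jacobian of $f$ restricted to submanifolds tangent to the cone field $\cC^F_\theta$. First I would note that, by the chain rule, $\prod_{i=0}^{n-1}\Jac(Df|_{T_{f^iy}f^i(\Delta)})=\Jac(Df^n|_{T_y\Delta})=\frac{\Leb_{f^n(\Delta)}(f^n(\Gamma))}{\Leb_\Delta(\Gamma)}$ after integrating over $\Gamma$ (assuming $\Gamma$ small enough, or writing the change-of-variables formula as an integral and then sandwiching). So the whole lemma follows once one shows that for $y$ in the $(\delta_\varepsilon,n)$-Bowen ball of $x$ along $\Delta$, the ratio
\[
\prod_{i=0}^{n-1}\frac{\Jac(Df|_{T_{f^iy}f^i(\Delta)})}{\Jac(Df|_{F(f^ix)})}\in[e^{-n\varepsilon},e^{n\varepsilon}].
\]
Taking logarithms, it suffices to bound $\sum_{i=0}^{n-1}\bigl|\log\Jac(Df|_{T_{f^iy}f^i(\Delta)})-\log\Jac(Df|_{F(f^ix)})\bigr|$ by $n\varepsilon$.

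The key step is that this sum is controlled by two small quantities at each time $i$: the distance $d(f^iy,f^ix)$, which is at most $\delta_\varepsilon$ by hypothesis, and the ``angular distance'' between the tangent plane $T_{f^iy}f^i(\Delta)$ and the reference subspace $F(f^ix)$. For the first, I would use uniform continuity of the map $(\text{point},\text{plane})\mapsto\log\Jac(Df|_{\text{plane}})$ on the Grassmannian bundle over the compact set $\overline U$ (this is where $f\in\mathcal C^1$ enters), so that shrinking $\delta_\varepsilon$ makes the contribution of the positional discrepancy at most $\varepsilon/2$ per step. For the second, the crucial point is the \emph{domination} of $E$ by $F$: I would prove a standard cone-invariance/contraction-on-the-Grassmannian statement, namely that any two $\dim F$-dimensional planes inside $\cC^F_{\theta_\varepsilon}(x)$ are exponentially forward-contracted toward each other under $Df$, and more importantly that the ``$F$-cone'' $\cC^F_{\theta_\varepsilon}$ is forward invariant with $Df(\cC^F_\theta)\subset\cC^F_{\theta/2}$ (already arranged in the metric choice above). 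This guarantees that for \emph{every} $i$, both $T_{f^iy}f^i(\Delta)$ and $F(f^ix)$ lie inside $\cC^F_{\theta_\varepsilon}(f^ix)$, and hence the angular error at step $i$ is at most $\theta_\varepsilon$; choosing $\theta_\varepsilon$ small then makes this contribution at most $\varepsilon/2$ per step. Summing gives the desired $n\varepsilon$ bound; one should be slightly careful that $f^i(\Delta)$ is only tangent to $\cC^F_{\theta_\varepsilon/2^i}\subset\cC^F_{\theta_\varepsilon}$, which only helps.

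The main obstacle, and the only genuinely nontrivial analytic input, is the uniform modulus-of-continuity estimate for $\log\Jac(Df|_V)$ as a function of the point and of the $\dim F$-plane $V\subset\cC^F_{\theta_\varepsilon}$: one must quantify how a $\delta$-perturbation of the base point together with a $\theta$-perturbation of the tangent plane changes the Jacobian, \emph{uniformly over all $n$}, so that the per-step error is genuinely $\varepsilon$ and not just $o(1)$. Since $Df$ is only continuous (not Hölder, as $f\in\mathcal C^1$), this is exactly a compactness/uniform-continuity argument rather than a quantitative Hölder estimate, and the domination hypothesis is what keeps all relevant planes in a fixed compact sub-bundle of the Grassmannian where uniform continuity can be invoked. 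A minor technical point to handle along the way: the Bowen-ball hypothesis only bounds $d(f^iy,f^ix)$ in $M$, so to compare $\Leb_{f^i(\Delta)}$-geometry I would, if needed, first pass to a slightly smaller $\Gamma$ on which $f^i$ restricted to $\Delta$ is a diffeomorphism onto its image with controlled geometry, or simply phrase the distortion estimate pointwise for the density $\frac{d(f^n_*\Leb_\Delta)}{d\Leb_{f^n(\Delta)}}$ and integrate.
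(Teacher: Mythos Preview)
Your proposal is correct and follows essentially the same approach as the paper: both split each factor $\Jac(Df|_{T_{f^iy}f^i(\Delta)})/\Jac(Df|_{F(f^ix)})$ into an angular part (controlled by $\theta_\varepsilon$ via cone invariance) and a positional part (controlled by $\delta_\varepsilon$ via uniform continuity of $\Jac(Df|_F)$), then apply the change-of-variables formula. The only minor difference is that the paper routes the comparison through $\Jac(Df|_{F(f^iy)})$ as an intermediate step, whereas you bundle both perturbations into one uniform-continuity statement on the Grassmannian bundle; also note that the exponential Grassmannian contraction you mention is not actually needed here---mere cone invariance $Df(\cC^F_\theta)\subset\cC^F_{\theta/2}$ already suffices.
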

\begin{proof}

Given $\varepsilon>0$, there is $\theta_\varepsilon>0$, such that for any submanifold $\tilde \Delta$ tangent to $ \cC^{F}_{\theta_\varepsilon}$ and for any $z\in \tilde \Delta$,  we have:
\begin{equation}\label{e.jac-submanifold-bundle}
{\rm e}^{-\varepsilon/2}\le \frac{{\rm Jac}\left(Df|_{T_z \tilde \Delta}\right)}{{\rm Jac}\left(Df|_{F(z)}\right)}\le {\rm e}^{\varepsilon/2}.
\end{equation}
Then we may choose $\delta_\varepsilon>0$ such that  any $z,z'\in U$ with $d(z,z')<\delta_\varepsilon$ satisfy:
\begin{equation}\label{e.jac-submanifold}
{\rm e}^{-\varepsilon/2}\le \frac{{\rm Jac}\left(Df|_{F(z)} \right)}{{\rm Jac}\left(Df|_{F(z')}\right)}\le {\rm e}^{\varepsilon/2}.\end{equation}

By change of variables,  we get therefore for any $x\in U$ and for any  subset $\Gamma$ \\of  $\left\{y\in\Delta:~d(f^i(y),f^ix)<\delta_\varepsilon,~0\le i\le n-1\right\}$: 
\begin{align*}
{\rm Leb}_\Delta(\Gamma)&=\int_{f^n(\Gamma)}{\rm Jac}(Df^{-n}|_{T_{z}f^n(\Delta)}){d}{\rm Leb}_{f^n(\Delta)}(z)\\
&=\int_{f^n(\Gamma)}\prod_{i=0}^{n-1}\frac{1}{{\rm Jac}\left(Df|_{T_{f^{i}(f^{-n}z)}f^{i}(\Delta)}\right)}{d}{\rm Leb}_{f^n(\Delta)}(z).
\end{align*}
Since $f^{i}(f^{-n}z)$ is $\delta_\varepsilon$-close to $f^ix$ and $f^i\Delta$ is tangent to $\cC^{F}_{\theta_{\varepsilon}}$,  we conclude  with \eqref{e.jac-submanifold} and \eqref{e.jac-submanifold-bundle} that: 
\begin{align*}
{\rm e}^{-n\varepsilon}{\rm Leb}_{f^n(\Delta)}\left(f^n(\Gamma)\right)\le{\rm Leb}_\Delta(\Gamma)\prod_{i=0}^{n-1}{\rm Jac}\left(Df|_{F(f^ix)}\right)\le{\rm e}^{n\varepsilon}{\rm Leb}_{f^n(\Delta)}\left(f^n(\Gamma)\right).
\end{align*}
\end{proof}

\subsection{Pliss times}\label{Subsection:Pliss}
Let $a''>a$ with ${\rm Leb}\left(\left\{(x\in U, \ \overline{m}_F(x)>a''\right\}\right)>0$ and fix $a'\in ]a,a''[$. 
 An integer $n\in\NN$ is said to be a Pliss time at $x\in U$  if for any $0\le j\le n-1$,  one has 
$$\frac{1}{n-j}\sum_{i=0}^{n-j-1}\log m\left(Df|_{F(f^{j+i}x)    }\right)\ge a'.$$
This notion of Pliss times depends on the fixed parameter $a'$.  
By convention,  $n=0$ is a Pliss time. 
We denote by $P(x)\subset\NN$ the set of Pliss times at $x$.  Clearly the set of Pliss time is measurable,  i.e.  for all $n\in \NN$,  the set $\{x\in M, \ n\in P(x)\}$ is measurable.   For any subset $T\subset \NN$ and for any $n\in\NN$,  we let 
$$d_n(T)=\frac{\#(T\cap [0,n))}{n}.$$
 The upper density $ {\overline d}(T)$ of $T$ is then defined as 
$${\overline d}(T)=\limsup_{n\to\infty}d_n(T).$$

We state the following version of Pliss Lemma \cite{Pli72}  :
\begin{Lemma}[Lemma 3.1 in \cite{ABV00}]\label{Lem:Pliss}
There is a constant $\alpha=\alpha(a'',a')\in(0,1]$ such that  any $x$ with $ \overline{m}_F(x)>a''$ satisfies  ${\overline d}(P(x))\ge\alpha$.

\end{Lemma}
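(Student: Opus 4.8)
The statement is the classical Pliss lemma of \cite{Pli72}, and the plan is to reduce it to an elementary counting fact about finite real sequences with a uniform upper bound on their increments. Write $a_i:=\log m\left(Df|_{F(f^ix)}\right)$ and fix once and for all $H:=\log\sup_{y\in M}\|D_y f\|<+\infty$, so that $a_i\le H$ for every $i\ge0$ and every $x\in U$. Since the standing assumption of this subsection makes $\{\overline{m}_F>a''\}$ nonempty, and since $\overline{m}_F\le H$ everywhere, we have $H>a''>a'$. Now set $T_0:=0$ and $T_m:=\sum_{i=0}^{m-1}(a_i-a')$ for $m\ge1$. Unwinding the definition, $n$ is a Pliss time at $x$ precisely when $T_n\ge T_j$ for all $0\le j\le n-1$, i.e. when $n$ is a \emph{record time} of $(T_m)_m$, meaning $T_n=\max_{0\le j\le n}T_j$. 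Moreover $T_m-T_{m-1}=a_{m-1}-a'\le L$ with $L:=H-a'>0$, and whenever $\frac1N\sum_{i=0}^{N-1}a_i>a''$ we get $T_N>cN$ with $c:=a''-a'\in(0,L)$.

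I would then prove the following counting fact, which is the Pliss lemma itself: for any finite sequence $0=T_0,T_1,\dots,T_N$ with increments bounded above by $L>0$ and with $T_N\ge cN$ for some $c\in(0,L)$, the number $\ell$ of its record times in $\{1,\dots,N\}$ satisfies $\ell\ge\alpha N$, where $\alpha:=c/L\in(0,1)$. Indeed, the running maximum $R(n):=\max_{0\le j\le n}T_j$ is non-decreasing, $n\ge1$ is a record time exactly when $R(n)=T_n$, and writing the record times in $\{1,\dots,N\}$ as $m_1<\dots<m_\ell$ and setting $m_0:=0$, $m_{\ell+1}:=N+1$, the function $R$ is constant and equal to $T_{m_i}$ on each interval $[m_i,m_{i+1})$. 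From $T_N\ge cN>0=T_0$ one gets $\ell\ge1$; and for $1\le i\le\ell$, since $m_i-1\in[m_{i-1},m_i)$ we have $T_{m_i-1}\le R(m_i-1)=T_{m_{i-1}}$, hence $T_{m_i}-T_{m_{i-1}}=(T_{m_i}-T_{m_i-1})+(T_{m_i-1}-T_{m_{i-1}})\le L+0$; together with $T_N\le R(N)=T_{m_\ell}$ and a telescoping sum this yields $cN\le T_N\le T_{m_\ell}=\sum_{i=1}^{\ell}(T_{m_i}-T_{m_{i-1}})\le\ell L$, so $\ell\ge\alpha N$.

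Finally, for any $x$ with $\overline{m}_F(x)>a''$ I would choose $n_k\to+\infty$ along which $\frac1{n_k}\sum_{i=0}^{n_k-1}a_i>a''$, which exists by the definition of the $\limsup$ in $\overline{m}_F$; applying the counting fact with $N=n_k$ and the correspondence above gives $\#\bigl(P(x)\cap\{1,\dots,n_k\}\bigr)\ge\alpha n_k$, hence $d_{n_k}(P(x))\ge\alpha-1/n_k$, and therefore $\overline{d}(P(x))\ge\alpha$ with $\alpha=\alpha(a'',a')=\frac{a''-a'}{H-a'}$ independent of $x$. I do not expect a genuine obstacle here: the content is entirely the short counting argument above. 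The only points that need care are formal — matching ``Pliss time at $x$'' with ``record time of $(T_m)$'' against the precise inequalities in the definition, and noting that $\overline{m}_F$ is defined with a $\limsup$ (not a $\liminf$), so that the density bound is obtained only along the scales $n_k$; but this is exactly the quantity controlled by the upper density $\overline{d}$, so nothing is lost.
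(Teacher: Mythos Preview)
Your proof is correct and is precisely the standard Pliss argument; the paper itself does not reproduce a proof but simply cites Lemma~3.1 of \cite{ABV00} (and ultimately \cite{Pli72}). Your explicit constant $\alpha=(a''-a')/(H-a')$ with $H=\log\sup_y\|D_yf\|$ is exactly the one appearing in the cited references, and the reduction of ``Pliss time'' to ``record time of the partial sums $T_m$'' together with the telescoping bound $T_{m_\ell}\le \ell L$ is the classical route.
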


The dynamic on a disk $\Delta_x\ni x$ tangent to a cone around $F$    is backward contracting at  Pliss times $n\in P(x)$.  Moreover the geometry of $f^n\Delta_x$ is bounded at $f^nx$,  i.e.    it contains a disc  $\Delta_{f^nx}\ni f^nx$ with lower bounded size.
An embedded disc  $D$ is said centered at $x$ with radius $\delta$ when $d_D(x,y)=\delta$  for any $y$ in the boundary of $D$.

\begin{Lemma}[Lemma 4.2 in \cite{AlP08}]\label{Lem:Pliss-iterate} 
There is $\delta_0>0$ and $\theta_0>0$  and $N\in \NN$  such that for any disk $\Delta\subset U$   of radius $\delta_0$ tangent to $\cC^{F}_{\theta_0}$,  for any $x\in \Delta$ with $d_\Delta(x, \partial \Delta)\geq \frac{\delta_0}{2}$,  for any Pliss time $P(x)\ni n>N$,  the image $f^n(\Delta)$ contains a disk $\Delta_{f^nx}$ centered at $f^n x$ with radius $\delta_0$  such that $f^{-i}(\Delta_{f^n(x)})$ decays exponentially:  \begin{equation}\label{contractPl}\forall 0\le i\le n \  \forall y,z\in \Delta_{f^nx}, \ \ d_{f^{-i}(\Delta_{f^nx})}\left(f^{-i}x,f^{-i}y\right)\le e^{-ia}d_{\Delta_{f^nx}}(y,z).\end{equation}
Moreover when $f$ is $\mathcal C^{1+\alpha}$ there is a constant $C>1$ such that 
\begin{equation}\label{equation:distorsion}\forall y,z\in \Delta_{f^nx},  \ \frac{\mathrm{Jac}f^{-n}|_{T_y\Delta_{f^nx}} }{\mathrm{Jac}f^{-n}|_{T_z\Delta_{f^nx}} }<C.
\end{equation}
\end{Lemma}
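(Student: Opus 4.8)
The plan is to follow the standard ``Pliss time $\Rightarrow$ bounded geometry'' argument (as in \cite{ABV00, AlP08}), exploiting the domination of $E$ by $F$ to control distortion. First I would fix $\theta_0 \le \theta_\varepsilon$ small enough (for $\varepsilon$ to be chosen) that the cone field $\cC^F_{\theta_0}$ is forward-invariant, so every forward iterate $f^i(\Delta)$ stays tangent to $\cC^F_{\theta_0/2} \subset \cC^F_{\theta_0}$; along such submanifolds the tangent Jacobian and the minimal expansion $m(Df|_{T_y f^i\Delta})$ are within a factor $e^{\pm\varepsilon}$ of the corresponding quantities on the bundle $F$, by the cone-estimate \eqref{e.jac-submanifold-bundle} and its analogue for $m(\cdot)$. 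The key mechanism is then: at a Pliss time $n\in P(x)$, the backward Birkhoff averages of $\log m(Df|_F)$ along the orbit segment are bounded below by $a'$ on every suffix $[j,n)$; choosing $a' > a$ and $\theta_0$ small, the ``$m$-version'' gives, for the forward iterates of a tangent disk, that $f^{-i}$ restricted to $T f^n\Delta$ contracts by at least $e^{-ia}$ for all $0\le i\le n$. This is exactly \eqref{contractPl} once we integrate the infinitesimal contraction along curves inside the disk and choose the radius $\delta_0$ small enough that the backward orbit of the disk $\Delta_{f^nx}$ of radius $\delta_0$ stays in a $\delta_\varepsilon$-neighborhood of the orbit of $x$ (so that the comparison estimates apply uniformly).

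The existence of the disk $\Delta_{f^nx}$ of definite radius $\delta_0$ centered at $f^nx$ inside $f^n(\Delta)$ is the geometric heart: starting from a disk $\Delta$ of radius $\delta_0$ with $x$ at distance $\ge \delta_0/2$ from $\partial\Delta$, the uniform backward contraction \eqref{contractPl} shows that $f^n(\Delta)$ has expanded ``enough'' near $f^nx$, so it contains a ball of radius $\delta_0$ around $f^nx$ inside the image. More precisely, one argues contrapositively: if $f^n(\Delta)$ did not contain $\Delta_{f^nx}$, then $f^{-n}$ would have to map some point of $\partial(\text{ball of radius }\delta_0\text{ at }f^nx)$ into $\Delta$, but the exponential backward contraction forces the $f^{-n}$-image of that ball to have diameter at most $e^{-0\cdot a}\delta_0 = \delta_0$ — here one must be slightly more careful, iterating the contraction estimate step by step and using that intermediate iterates stay within the $\delta_\varepsilon$-neighborhood (a bootstrap: the contraction keeps the backward orbit close, which justifies the contraction). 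Choosing $N$ large guarantees $e^{-na}$ is small enough to close this argument and to absorb the additive error from the $\Lambda_\ell$-type defect; this is where the hypothesis $n>N$ enters.

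The bounded distortion \eqref{equation:distorsion} in the $\mathcal C^{1+\alpha}$ case is then a routine consequence of the uniform backward contraction together with Hölder continuity of $x\mapsto \log \Jac(Df|_{T_x f^i\Delta})$: writing $\log \frac{\Jac f^{-n}|_{T_y\Delta_{f^nx}}}{\Jac f^{-n}|_{T_z\Delta_{f^nx}}} = \sum_{i=0}^{n-1}\left(\log\Jac(Df^{-1}|_{T_{f^{-i}y}f^{-i}\Delta}) - \log\Jac(Df^{-1}|_{T_{f^{-i}z}f^{-i}\Delta})\right)$ and bounding each term by $\mathrm{const}\cdot d(f^{-i}y, f^{-i}z)^\alpha \le \mathrm{const}\cdot e^{-i\alpha a}\delta_0^\alpha$ via \eqref{contractPl}, the geometric series sums to a constant $C$ independent of $n, y, z$. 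I expect the main obstacle to be the careful bookkeeping in the second paragraph: making the ``disk of definite radius survives in the image'' claim rigorous requires simultaneously controlling that the whole backward orbit of $\Delta_{f^nx}$ stays $\delta_\varepsilon$-close to the orbit of $x$ (so the cone and comparison estimates are valid) \emph{and} that it contracts — a mild chicken-and-egg that is resolved by an induction on $i$, but needs the radius $\delta_0$ and the threshold $N$ chosen in the right order relative to $\theta_0, \delta_\varepsilon, \varepsilon, a', a$.
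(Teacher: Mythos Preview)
The paper does not give its own proof of this lemma; it simply quotes the result from \cite[Lemma~4.2]{AlP08} (with the distortion estimate \eqref{equation:distorsion} being a standard addendum in the $\mathcal C^{1+\alpha}$ case). Your plan is correct and is precisely the argument carried out in \cite{ABV00,AlP08}: forward invariance of the cone $\cC^F_{\theta_0}$, comparison of $m(Df|_{T\cdot f^i\Delta})$ with $m(Df|_F)$ up to a factor $e^{\pm\varepsilon}$, the Pliss condition giving backward contraction by $e^{-ia'}$ (hence $\leq e^{-ia}$ after absorbing the $\varepsilon$-error), the inductive bootstrap ensuring the backward iterates of the $\delta_0$-disk stay $\delta_\varepsilon$-close to the orbit of $x$, and the H\"older telescoping sum for \eqref{equation:distorsion}. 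The ``chicken-and-egg'' you flag is indeed the only delicate point, and your resolution by induction on $i$ with $\delta_0$ chosen small relative to $\delta_\varepsilon$ is the standard one.
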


For two points $x, y$ with $d(x,y)$ less than the radius of injectivity $R_{inj}$ of $M$,  we write $x\sim^F y$ when the unique geodesic joining $x$ and $y$ is tangent to  $\cC^{F}_{\theta_0}$.  Then we may find $\delta_1<\min(R_{inj},\delta_0)$ (depending on $\theta_0$) so small that if $x\sim^F y$,  $d(x,y)<\delta_1$  and $ d(fx,fy)<\delta_1$ then $f(x)\sim^Ff(y)$. 
For general disks  tangent to $\cC^{F}_{\theta_0}$  it may happen that $x\nsim ^F y$ for  some points $x$ and $y$ in the disk.   We will work with specific disks,  that may be written through the exponential map as the graph of a Lipschitz map over $F$.   For $\delta_1$ small enough,   any disk centered at $x$ tangent to $\cC^{F}_{\theta_1}$ for any  $\theta_1<\theta_0$ of radius $\delta_0$ contains a subdisk of the form ${\rm exp}_x(\Gamma_g)$ where $\Gamma_g$ is the graph  of a $\theta_1$-Lipshitz map $g:F(x)^{\delta_1}\rightarrow E(x)$ with $g(0)=0$ where     $F(x)^{\delta_1} $ denotes the ball centered at $0$ of radius $\delta_1$ in $F(x)\subset T_xM$.  Fix finally $\theta_1<\theta_0$  and $\delta_1$ so small that any two points $y,z$ in ${\rm exp}_x(\Gamma_g)$ satisfy $y\sim^Fz$.  
The parameters $\theta_0$,  $\theta_1, \delta_0,\delta_1$ being now fixed, such a disk ${\rm exp}_x(\Gamma_g)$ is called an \emph{$F$-disk} at $x$.  In    Lemma \ref{Lem:Pliss-iterate}     the disks $\Delta$  and $\Delta_{f^nx}$   may be chosen to be $F$-disks and we will assume it is the case when applying this lemma in the following.    

\begin{Lemma}\label{disjoint}
Let $\Delta$ be an embedded disk.  There is $0<\delta_2<\delta_1$ such that if $\Delta'\subset \Delta$ and $\Delta''\subset \Delta$ are $F$-disks centered at $y'$ and $y''$,  and if $y'$ and $y''$  lie in a ball $B(x,\delta)$ with $\delta<\delta_2$,  then :
\begin{itemize}
\item either $\Delta'\cap B(x,\delta)=\Delta''\cap B(x,\delta)$,
\item or $\Delta'\cap \Delta''=\emptyset$ and there is $z'\in \Delta'\cap B(x,2\delta) $ and $z''\in \Delta''\cap B(x,2\delta)$ with $z\nsim^F z'$. 
\end{itemize}

\end{Lemma}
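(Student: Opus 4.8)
The plan is to choose $\delta_2$ so small that, near $x$, the two $F$-disks are visible as single-valued Lipschitz graphs over the plane $F(x)$, and then to run the dichotomy according to whether those two graphs agree near $x$. Recall that $\delta_1$ has already been fixed so small that every $F$-disk is a single $\theta_1$-Lipschitz graph (read through $\exp$) over a ball in its central $F$-fibre, along which the fibres of $F$ are almost parallel. Using uniform continuity of the bundles $E,F$ and of the exponential map together with the strict inequality $\theta_1<\theta_0$ (and the fact that $\theta_0$ may be assumed small), I would fix $\delta_2<\delta_1/4$ so small that, for every $x\in U$ and every $F$-disk $\Delta'$ centred at some $y'\in B(x,\delta_2)$, the following hold.
\begin{enumerate}[(a)]
\item $\Delta'\subset B(x,4\delta_1)$ and, in the chart $\exp_x$, $\Delta'$ is the graph of a $\theta_0$-Lipschitz map $\Phi'\colon U'\to E(x)$ over a topological ball $U'\subset F(x)$ with $U'\supset B_{F(x)}(0,\delta_1/2)$, the ball $U'$ depending continuously on $\Delta'$.
\item For $\delta<\delta_2$, the $F(x)$-component of any point of $\Delta'\cap B(x,\delta)$ has norm $<2\delta$.
\item Any two points of $B(x,4\delta_1)$ with equal $F(x)$-component but distinct $E(x)$-components are joined by a geodesic that is \emph{not} tangent to $\cC^{F}_{\theta_0}$ (such a geodesic is almost parallel to $E(x)$, while $\cC^{F}_{\theta_0}$ stays close to $\cC^{F}_{\theta_0}(x)$, which contains no nonzero vector of $E(x)$).
\item Whenever $\Delta',\Delta''$ are $F$-disks with centres in a common ball $B(x,\delta_2)$, the set $U'\cap U''$ is connected.
\end{enumerate}
All four are routine compactness statements.

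Now let $x$, $\delta<\delta_2$ and $F$-disks $\Delta',\Delta''\subset\Delta$ centred at $y',y''\in B(x,\delta)$ be given, with graph representations $\Phi'\colon U'\to E(x)$ and $\Phi''\colon U''\to E(x)$ from (a) (here I use that $\Delta$, as elsewhere in this section, has dimension $\dim F$). If $\Delta'\cap B(x,\delta)=\Delta''\cap B(x,\delta)$ we are in the first alternative and there is nothing left to prove. Otherwise, by symmetry there is $q\in(\Delta'\cap B(x,\delta))\setminus\Delta''$. Writing $v_0\in F(x)$ for its $F(x)$-component, (b) gives $|v_0|<2\delta<\delta_1/2$, so $v_0\in U'\cap U''$ by (a); hence $z':=q=\exp_x(v_0,\Phi'(v_0))\in\Delta'$ and $z'':=\exp_x(v_0,\Phi''(v_0))\in\Delta''$ both make sense, and since $q\notin\Delta''$ we get $z'\ne z''$, i.e.\ $\Phi'(v_0)\ne\Phi''(v_0)$. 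A direct estimate (the exponential distortion being close to $1$ and $\theta_0$ small at this scale) gives $d(z'',x)<2\delta$, so $z'\in\Delta'\cap B(x,2\delta)$ and $z''\in\Delta''\cap B(x,2\delta)$; as these two points share the $F(x)$-component $v_0$ and have distinct $E(x)$-components, (c) yields $z'\nsim^{F}z''$. This is the transverse pair required by the second alternative.

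It remains to show, still in this second case, that $\Delta'\cap\Delta''=\emptyset$. Suppose not, and set $G:=\{v\in U'\cap U'':\Phi'(v)=\Phi''(v)\}$, which is exactly the $F(x)$-projection of $\Delta'\cap\Delta''$. It is nonempty by assumption and closed (it is the image of the closed set $\Delta'\cap\Delta''$ under $\exp_x^{-1}$ followed by the $F(x)$-projection, or simply by continuity of $\Phi',\Phi''$). It is also open in $U'\cap U''$: if $\Phi'(v_1)=\Phi''(v_1)$, then $q_1:=\exp_x(v_1,\Phi'(v_1))\in\Delta'\cap\Delta''$, and since $\Delta'$ and $\Delta''$ have dimension $\dim F=\dim\Delta$ they are relatively open in $\Delta$, so $\Delta'\cap\Delta''$ contains a $\Delta$-neighbourhood of $q_1$, which — both disks being graphs over $F(x)$ near $q_1$ — projects onto an $F(x)$-neighbourhood of $v_1$ on which $\Phi'=\Phi''$ (with the usual minor care when $q_1$ lies over $\partial\Delta'$ or $\partial\Delta''$). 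As $U'\cap U''$ is connected by (d), $G=U'\cap U''$; in particular $\Phi'(v_0)=\Phi''(v_0)$, contradicting the previous paragraph. Hence $\Delta'\cap\Delta''=\emptyset$, which completes the second alternative.

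I expect the first two steps to be essentially bookkeeping with thin Lipschitz graphs and cone fields, in the spirit of the estimates already established in this subsection; the genuine obstacle is the openness argument in the last step. It is there that one must exploit that, although $\Delta$ may fold, $\Delta'$ and $\Delta''$ are \emph{top-dimensional} — hence relatively open — connected pieces of the single embedded disk $\Delta$, and that each of them is a globally single-valued graph over $F(x)$; one must also keep the boundaries $\partial\Delta'$, $\partial\Delta''$ under control and use the smallness built into property (d), without which $U'\cap U''$ could fail to be connected.
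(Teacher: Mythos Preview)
Your proof is correct and follows essentially the same strategy as the paper's (much terser) argument: pull back both $F$-disks through $\exp_x^{-1}$ to Lipschitz graphs $\Phi',\Phi''$ over nearby domains in $F(x)$, and exploit that $\Delta',\Delta''$ are top-dimensional---hence relatively open---subdisks of the single embedded disk $\Delta$ to run a clopen argument on the set $\{\Phi'=\Phi''\}$. The paper organises the dichotomy around whether the two graphs meet (if they do, ``they coincide on their intersection because they are subdisks of $\Delta$'', giving the first alternative; if not, the geodesic through $x$ in the $E(x)$-direction supplies the transverse pair $z',z''$), whereas you start from the failure of the first alternative, extract the transverse pair from a point in the symmetric difference, and then deduce disjointness afterwards. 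The substantive content is the same; you are simply more explicit about the openness step and about the connectedness of $U'\cap U''$ (your property (d)), both of which the paper leaves implicit in the single sentence quoted above. One cosmetic point: your property (c) is stated on $B(x,4\delta_1)$, which depends only on the already-fixed $\delta_1$, but you only ever apply it to points in $B(x,2\delta)\subset B(x,2\delta_2)$, so the version you actually need is indeed controllable by the choice of $\delta_2$.
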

\begin{proof}
For $\delta_2\ll \delta_1$ small enough, ${\rm exp_x}^{-1}(\Delta')$ (resp.  ${\rm exp_x}^{-1}(\Delta')$ ) is a graph of a $2\theta_1$-Lipschitz maps $g'$  (resp. $g''$) from a  neighborhood of $F(x)^{\delta_1/2}$ to $E(x)$ and  $B(x,\delta_2)\cap \Gamma_{g'}\subset \Gamma_{g'|_{F(x)^{\delta_1/2}}}$ (resp.  idem for $g''$).  If these two graphs are not disjoint, they  coincide on their intersection because they are subdisks of $\Delta$.  Therefore $\Delta'\cap B(x,\delta_2)=\Delta''\cap B(x,\delta_2)$.  If they are disjoint a geodesic path $\gamma$ with $\gamma(0)=x$ and $\gamma'(0)\in  E(x)$ intersect transversally $\Delta'$ and $\Delta''$ in $B(x,2\delta)$.  If we let $z'$ and $z''$ these intersections then we have $z\nsim^F z'$.
\end{proof}

\subsection{F\o lner empirical measures}
By Fubini's theorem, we may find an  $F$-disk $D$ tangent to $\cC^{F}_{\theta}$ with $\theta$ as in Lemma \ref{Lem:Pliss-iterate} and 
with ${\rm Leb}_D(\{x\in U:  \ \overline{m}_F(x)>a''\})>0$. 

For a subset $E\subset\NN^*$,  we let  $\partial E$ be the symmetric difference of $E$ and $E+1$.  Given $m\in\NN$, for any subset $T\subset \NN$, denote by $T^m=\{n\in\NN:~\exists t_1\in T,~t_2\in T,~{\textrm s.t.}~t_1\le n\le t_2 \text{ and }~t_2-t_1\le m\}$.  
By applying \cite[Lemma  2]{Bur21} to the measurable set of Pliss times,  we get:

\begin{Theorem}\label{Thm:construction}
There are a sequence $\{n_\ell\}$ of positive integers, a sequence of subsets $\{\Lambda_{\ell}\}$ in $D\cap\{\overline{m}_F>a''\}$ indexed by $\ell\in  \NN$ such that
\begin{enumerate}
\item $\lim_{\ell\to\infty}n_\ell=\infty$;
\item\label{i.sub-exponential} The measure of $\Lambda_{\ell}$ satisfies:
$$\lim_{\ell\to\infty}\frac{1}{n_\ell}\log{\rm Leb}_D(\Lambda_{\ell})=0,$$
\item\label{i.density} For each $\ell$, there is a subset $Q_{\ell}\subset[0,n_\ell)$ s.t. 
$$\liminf_{\ell\to\infty}\frac{\#Q_{\ell}}{n_\ell}\ge \alpha,$$
where $\alpha$ is the Pliss density as in Lemma~\ref{Lem:Pliss}.
\item\label{i.boundary-condition} $\{Q_{\ell}\}$ has the F{\o}lner property:
$$\limsup_{\ell\to\infty}\frac{\#\partial Q_{\ell}}{n_\ell}=0$$
\item For any $x\in\Lambda_{\ell}$, one has that $\partial Q_{\ell}\subset P(x)$ for any $x\in\Lambda_{\ell}$.
\item $P(x)$ is uniformly dense in $Q_{\ell}$ on $\Lambda_{\ell}$, i.e.,
$$\limsup_{m\to\infty}\limsup_{\ell\to\infty}\sup_{x\in\Lambda_{l}}\frac{\#(Q_{\ell}\setminus P^m(x))}{n_\ell}=0$$
\item\label{i.uniform-lower} $\{Q_{\ell}\}$ has uniform lower density in the following sense:
$$\liminf_{\ell\to\infty}\inf_{x\in \Lambda_{\ell}}\frac{\#(P(x)\cap Q_{\ell})}{n_\ell}\ge\alpha.$$
\end{enumerate}
\end{Theorem}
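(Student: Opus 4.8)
The plan is to recognize this as a direct application of \cite[Lemma~2]{Bur21} to the $\mathrm{Leb}_D$-measurable family $x\mapsto P(x)$ of Pliss-time sets, and then to check that the hypotheses of that lemma are met. Those hypotheses are: the measurability of $\{x:\ n\in P(x)\}$ for each $n$ --- recorded in \S\ref{Subsection:Pliss} --- and the existence of a set of positive $\mathrm{Leb}_D$-measure on which the upper density $\overline d(P(x))$ is bounded below by a fixed positive constant; the latter is supplied by Lemma~\ref{Lem:Pliss} on the set $A:=D\cap\{\overline{m}_F>a''\}$, which has $\mathrm{Leb}_D(A)>0$ by the choice of $D$, with constant $\alpha$. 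For completeness I would recall the two mechanisms inside \cite[Lemma~2]{Bur21}.

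The first is a Borel--Cantelli argument producing the times $n_\ell$ together with items~1 and \eqref{i.sub-exponential}. For $\beta<\alpha$ and $n\in\NN$ put $B_n^\beta:=\{x\in A:\ d_n(P(x))\ge\beta\}$; these sets are measurable. Since $\overline d(P(x))\ge\alpha>\beta$ for every $x\in A$, each such $x$ lies in $B_n^\beta$ for infinitely many $n$, so $A\subset\bigcup_{n\ge N}B_n^\beta$ for every $N$. If one had $\mathrm{Leb}_D(B_n^\beta)\le e^{-cn}$ for all large $n$ and some $c>0$, then $\sum_n\mathrm{Leb}_D(B_n^\beta)<\infty$ and Borel--Cantelli would force $\mathrm{Leb}_D$-almost every point into only finitely many $B_n^\beta$, contradicting $\mathrm{Leb}_D(A)>0$. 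Hence $\limsup_n\frac{1}{n}\log\mathrm{Leb}_D(B_n^\beta)=0$; extracting a subsequence that realizes this limsup, and letting $\beta\uparrow\alpha$ by a diagonal procedure, one gets $n_\ell\to\infty$, constants $\beta_\ell\uparrow\alpha$, and sets $\Lambda'_\ell\subset B_{n_\ell}^{\beta_\ell}\subset A$ with $\frac{1}{n_\ell}\log\mathrm{Leb}_D(\Lambda'_\ell)\to0$ and $\#\!\left(P(x)\cap[0,n_\ell)\right)\ge\beta_\ell n_\ell$ for all $x\in\Lambda'_\ell$.

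The second is a pigeonhole step that uniformizes the (still $x$-dependent) set $P(x)\cap[0,n_\ell)$ and extracts the common F\o lner skeleton $Q_\ell$. One attaches to each $x\in\Lambda'_\ell$ a coarse combinatorial type recording, at a scale $m_\ell$ that grows slowly with $\ell$, which pieces of $[0,n_\ell)$ carry no Pliss time of $x$ --- equivalently, the location of the long gaps of $P(x)$. Because $P(x)$ has density at least $\beta_\ell$ in $[0,n_\ell)$ these long gaps are few, so there are only $e^{o(n_\ell)}$ admissible types; a pigeonhole therefore fixes one type on a subset $\Lambda_\ell\subset\Lambda'_\ell$ with $\mathrm{Leb}_D(\Lambda_\ell)\ge e^{-o(n_\ell)}\mathrm{Leb}_D(\Lambda'_\ell)$, so \eqref{i.sub-exponential} still holds. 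One then lets $Q_\ell$ be $[0,n_\ell)$ with the common long gaps deleted and the endpoints pushed to the nearest common Pliss times; since the number of deleted pieces is $o(n_\ell)$, \eqref{i.boundary-condition} holds, and the endpoints being Pliss times shared by all $x\in\Lambda_\ell$ gives item~5. As $Q_\ell\supset P(x)\cap[0,n_\ell)$ one has $\#Q_\ell\ge\#(P(x)\cap Q_\ell)\ge\beta_\ell n_\ell$, which gives \eqref{i.density} and \eqref{i.uniform-lower} after $\beta_\ell\uparrow\alpha$; and inside $Q_\ell$ the gaps of $P(x)$ are short, which (on sending $\ell\to\infty$ and then $m\to\infty$) yields item~6.

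The step I expect to be the main obstacle --- and the reason I would cite \cite[Lemma~2]{Bur21} rather than carry it out in full --- is the calibration of the scale $m_\ell$ in the pigeonhole step. It must grow fast enough that there are only $e^{o(n_\ell)}$ combinatorial types, so that the pigeonhole does not ruin the subexponential lower bound on $\mathrm{Leb}_D(\Lambda_\ell)$ demanded by \eqref{i.sub-exponential}; at the same time it must grow slowly, and $Q_\ell$ must be cut accurately enough, that the gaps of $P(x)$ surviving inside $Q_\ell$ contribute a negligible proportion of $[0,n_\ell)$ in the iterated limit $\limsup_m\limsup_\ell$ that controls item~6 and \eqref{i.uniform-lower}. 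Reconciling these competing requirements, uniformly over $x\in\Lambda_\ell$, is where one exploits more than the bare density estimate of Lemma~\ref{Lem:Pliss} --- in particular the hereditary identity $P(x)\cap[\tau,\infty)=\tau+P(f^\tau x)$, valid for every $\tau\in P(x)$, which constrains how the long gaps can be distributed.
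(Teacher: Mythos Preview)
Your proposal is correct and matches the paper's approach exactly: the paper proves this theorem in a single sentence, ``By applying \cite[Lemma~2]{Bur21} to the measurable set of Pliss times, we get:'', after having recorded in \S\ref{Subsection:Pliss} both the measurability of $\{x:\ n\in P(x)\}$ and the Pliss density bound (Lemma~\ref{Lem:Pliss}) on the positive-measure set $D\cap\{\overline m_F>a''\}$. Your additional sketch of the Borel--Cantelli and pigeonhole mechanisms inside \cite[Lemma~2]{Bur21} is more than the paper itself provides, and is consistent with how that lemma is proved.
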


Observe that  we get from Item~\ref{i.sub-exponential} and Item~\ref{i.density} that 
$\lim_{\ell\to\infty}\frac{1}{\# Q_\ell}\log {\rm Leb}_{D}(\Lambda_{\ell})=0.$

\subsection{F\o lner Gibbs Property}

Assume that $\cA$ is a finite partition of $M$. For each $x\in M$, denote by $\cA(x)$ the atom of  $\cA$ containing $x$.  Given two finite partitions $\cA$ and $\cB$, one defines the refinement of $\cA$ and $\cB$:
$$\cA\bigvee \cB=\{A\cap B:~A\in\cA,~B\in\cB\}.$$
One extends this definition to a finite sequence of partitions $\cA_1\bigvee\cA_2\bigvee\cdots\bigvee\cA_n$ inductively. For a finite subset $\MM\subset\NN\cup\{0\}$, we let
$$\cA^\MM=\bigvee_{n\in\MM}f^{-n}(\cA).$$

\smallskip

Let  $D$, $\{n_\ell\}$, $\{\Lambda_\ell\}$ and $\{Q_\ell\}$ as defined in the previous subsections.   The aim of this subsection is to prove the following Gibbs property: 
\begin{Theorem}\label{Thm:volume-estimate}
For any $\varepsilon>0$, there is $\delta>0$   such that for any partition $\cA$ of $M$ with ${\rm Diam}(\cA)<\delta$, there is $L\in\NN$ such that for any $\ell>L$, for any $x\in\Lambda_{\ell}$, one has that
$${\rm Leb}_{D}(\cA^{Q_{\ell}}(x)\cap\Lambda_{\ell})\le {\rm e}^{\varepsilon\# Q_{\ell}}{\rm e}^{-\sum_{i\in Q_\ell}\log{\rm Jac}\left(Df|_{F(f^ix)}\right)}.$$
\end{Theorem}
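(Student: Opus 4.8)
The plan is to prove the bound by \emph{telescoping} the volume--distortion inequality of Lemma~\ref{Lem:estimate-at-bowen-ball} along the Pliss skeleton of $Q_\ell$ provided by Theorem~\ref{Thm:construction}, using bounded geometry at Pliss times (Lemma~\ref{Lem:Pliss-iterate}) to keep forward iterates of the atom from spreading, and Lemma~\ref{disjoint} to control how many $F$-disk ``sheets'' such an iterate can occupy. Given $\varepsilon>0$, I would fix $\varepsilon'>0$ small in terms of $\varepsilon$, $\dim F$ and $\sup_M\log\|Df^{\pm1}\|$; let $\delta_{\varepsilon'},\theta_{\varepsilon'}$ be supplied by Lemma~\ref{Lem:estimate-at-bowen-ball}, and keep $\delta_0,\theta_0,N$ from Lemma~\ref{Lem:Pliss-iterate} and $\delta_1,\delta_2$ from Lemma~\ref{disjoint} and the discussion before it. Set $\delta:=\tfrac{1}{10}\min(\delta_{\varepsilon'},\delta_1,\delta_2)$ and let $\cA$ be any partition with $\diam(\cA)<\delta$. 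For $x\in\Lambda_\ell$ and any $y$ in the atom $\cA^{Q_\ell}(x)$ one has $d(f^ky,f^kx)<\delta$ for every $k\in Q_\ell$; hence $W:=\cA^{Q_\ell}(x)\cap\Lambda_\ell$ lies in the ``$Q_\ell$--Bowen ball'' of $x$ at scale $\delta$, and, shrinking $\delta$ if needed, $W$ has diameter $<\delta_0/2$ in $M$.

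Next, I would decompose $Q_\ell=\bigsqcup_{j=1}^{r_\ell}[p_j,q_j)$ into connected components. By the F\o lner property (item~\ref{i.boundary-condition} of Theorem~\ref{Thm:construction}), $r_\ell\le\#\partial Q_\ell=o(n_\ell)$, and by the fifth item of that theorem each $p_j$ and $q_j$ belongs to $P(x)$ for every $x\in\Lambda_\ell$. By the sixth item (uniform density of $P(x)$ inside $Q_\ell$), after fixing $m$ large one further subdivides each $[p_j,q_j)$ at Pliss times of $x$ into blocks of length $\le m$ together with an exceptional set $B_\ell(x)\subset Q_\ell$ with $\#B_\ell(x)=o(n_\ell)$ uniformly over $x\in\Lambda_\ell$. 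Since $\liminf_\ell\#Q_\ell/n_\ell\ge\alpha$, any error of size $o(n_\ell)$ is $o(\#Q_\ell)$ and will be absorbed into $e^{\varepsilon\#Q_\ell}$.

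For the core step, fix $x\in\Lambda_\ell$ and pick an $F$-disk $\tilde D\subset D$ of radius $\delta_0$ with $x$ in its $\tfrac{\delta_0}{2}$--core, so that $W\subset\tilde D$; up to discarding a bounded number of initial times, the iterates of $\tilde D$ are tangent to $\cC^F_{\theta_{\varepsilon'}}$. At each endpoint $q_j>N$, Lemma~\ref{Lem:Pliss-iterate} at $x$ with the Pliss time $q_j$ yields an $F$-disk $\Delta^j\subset f^{q_j}(\tilde D)$ of radius $\delta_0$ centered at $f^{q_j}x$ whose backward iterates $f^{-i}(\Delta^j)$, $0\le i\le q_j$, keep diameter $\le 2\delta_0$ and contract as in~\eqref{contractPl}. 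Since $q_j-1\in Q_\ell$, every $y\in W$ satisfies $d(f^{q_j}y,f^{q_j}x)\le C_1\delta$, so $f^{q_j}(W)\subset B(f^{q_j}x,C_1\delta)\cap f^{q_j}(\tilde D)$; by Lemma~\ref{disjoint} this set is a union of pairwise disjoint $F$-disk sheets, one of which (inside the ball) is a sub-disk of $\Delta^j$, and the contribution of the remaining sheets is handled using Lemma~\ref{disjoint} together with the backward contraction~\eqref{contractPl}. On each length-$\le m$ block $[u,u')\subset[p_j,q_j)$, $W$ stays in the $[u,u')$--Bowen ball of $x$ at scale $\delta<\delta_{\varepsilon'}$, so Lemma~\ref{Lem:estimate-at-bowen-ball} converts a volume bound at time $u'$ into one at time $u$ at the cost of a factor $e^{\varepsilon'(u'-u)}\prod_{i=u}^{u'-1}\Jac(Df|_{F(f^ix)})^{-1}$. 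Chaining these across $[p_j,q_j)$, crossing the gap $[q_j,p_{j+1})$ via the backward-contracting disk $\Delta^j$ (no Jacobian is charged, since $[q_j,p_{j+1})\cap Q_\ell=\emptyset$), bounding $\Jac(Df|_F)$ crudely on $B_\ell(x)$ and on the initial $\le N$ times, and summing the $r_\ell$ junction errors, one obtains
\[\Leb_D(W)\,\prod_{i\in Q_\ell}\Jac(Df|_{F(f^ix)})\ \le\ e^{(\varepsilon'+o(1))\#Q_\ell}\,\Leb_{f^{q_{r_\ell}}(\tilde D)}\!\big(f^{q_{r_\ell}}(W)\big).\]
The last factor is bounded by a constant depending only on $\delta$ and $\dim F$ (a piece of a cone-tangent $\dim F$-submanifold inside a ball of radius $C_1\delta$, with bounded sheet multiplicity by Lemma~\ref{disjoint}); taking $m$ large and then $\ell$ large makes the right-hand side $\le e^{\varepsilon\#Q_\ell}$, which is exactly the asserted estimate.

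I expect the main obstacle to lie in two entangled features of the core step. First, the gaps $[q_j,p_{j+1})$ of $Q_\ell$ carry no Jacobian budget and their total density need not be small, so the volume of the iterated atom must be prevented from expanding on them before the next component of $Q_\ell$ is reached; this is exactly what the backward-contracting disks of Lemma~\ref{Lem:Pliss-iterate} at the Pliss times bracketing each gap provide. Second, at each Pliss time $q_j$ one must ensure that $f^{q_j}(W)$ effectively occupies only a controlled $F$-disk rather than spreading over many folded sheets of $f^{q_j}(\tilde D)$ meeting $B(f^{q_j}x,C_1\delta)$; the disjointness dichotomy of Lemma~\ref{disjoint}, coupled with the backward contraction~\eqref{contractPl}, is the tool for this. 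Once this geometric localization is in place, verifying that the errors coming from $\partial Q_\ell$, from $B_\ell(x)$, from Pliss times $\le N$, and from the constant boundary term all fit inside $e^{\varepsilon\#Q_\ell}$ is routine bookkeeping, using $\liminf_\ell\#Q_\ell/n_\ell\ge\alpha$.
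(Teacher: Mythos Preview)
Your overall strategy---telescoping Lemma~\ref{Lem:estimate-at-bowen-ball} along the connected components of $Q_\ell$, using that the endpoints lie in $\partial Q_\ell\subset P(x)$ for bounded geometry, and absorbing $r_\ell\le\#\partial Q_\ell=o(n_\ell)$ junction constants via the F\o lner property---is the paper's approach, which is packaged there as an induction on the number of intervals (Proposition~\ref{Pro:volume-estimate-any-manifold}). Two points deserve correction.

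First, the gap-crossing is where your outline is imprecise and where the real work lies. You say the gap $[q_j,p_{j+1})$ is crossed ``via the backward-contracting disk $\Delta^j$'' with ``no Jacobian charged''. But $\Delta^j$ contracts \emph{backward} from $q_j$; it says nothing about the passage forward to $p_{j+1}$, during which the iterated atom may genuinely spread and where you have no Bowen control (gap times are not in $Q_\ell$). The paper's mechanism is different: at time $p_{j+1}$ (a Pliss time for every $z\in\Lambda_\ell$) one covers $f^{p_{j+1}}(\cA^{Q_\ell}(x)\cap\Lambda_\ell)$ by finitely many pairwise disjoint $F$-disks $\Delta_{f^{p_{j+1}}y_i}$ (Lemma~\ref{disjoint}); their backward images at time $q_j$ are disjoint pieces of the bounded disk $\Delta^j$, and comparing volumes with Lemma~\ref{Lem:estimate-at-bowen-ball} (legitimate on the gap thanks to the backward contraction \eqref{contractPl} from $p_{j+1}$, not the $\cA^{Q_\ell}$-Bowen condition) yields the key estimate
\[
\sum_i \Big(\prod_{k=q_j}^{p_{j+1}-1}\Jac\big(Df|_{F(f^k y_i)}\big)\Big)^{-1}\ \le\ C'\,e^{\varepsilon(p_{j+1}-q_j)}.
\]
This is what lets the telescoping cross each gap at the cost of a single multiplicative constant; it is the content of the second Claim in the induction step of Proposition~\ref{Pro:volume-estimate-any-manifold} and is absent from your outline. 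Relatedly, your multi-sheet worry is placed at the wrong time: at $q_j$ the first Claim of that proposition shows $f^{q_j}(W)$ lies in a \emph{single} $F$-disk; the genuine branching into sheets $\Delta_{f^{p_{j+1}}y_i}$ occurs at $p_{j+1}$, after the gap.

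Second, your subdivision of each $[p_j,q_j)$ into length-$\le m$ blocks using item~6 of Theorem~\ref{Thm:construction} is unnecessary: since $[p_j,q_j)\subset Q_\ell$, the Bowen condition $d(f^iy,f^ix)<\delta$ already holds for every $i\in[p_j,q_j)$, so Lemma~\ref{Lem:estimate-at-bowen-ball} applies to the whole interval at once. The paper's proof of Theorem~\ref{Thm:volume-estimate} uses only items~\ref{i.density}, \ref{i.boundary-condition} and the inclusion $\partial Q_\ell\subset P(x)$.
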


Recall the numbers $\theta_\varepsilon$ and $\delta_\varepsilon$ depending on $\varepsilon$ as in Lemma~\ref{Lem:estimate-at-bowen-ball}. We will prove the following intermediate proposition:
\begin{Proposition}\label{Pro:volume-estimate-any-manifold}
For any $\varepsilon$,   there are $\delta>0$  and $C>1$ satisfying the following property.

Let $n\in\NN$. Assume that $Q=\cup_{j=0}^{n-1}[a_j,b_j)$ satisfies $a_0=0$ and $a_{i+1}>b_i>a_i$. For any partition $\cA$ of $M$ with ${\rm Diam}(\cA)<\delta$, for any $x\in U$ and  for any $F$-disk $\Delta$ centered in $\cA^{Q}(x)$  and tangent to $\cC^{F}_{\theta_\varepsilon}$ we have with $\Delta^{Q}:=\{z\in\Delta:~a_j,b_j\in P(z),~\forall 0\le j\le n-1\}$ :
$${\rm Leb}_\Delta\left(\cA^{Q}(x)\cap\Delta^Q\right)\le C^{n}\cdot {\rm e}^{2\varepsilon\cdot b_{n-1}}{\rm e}^{-\sum_{j=0}^{n-1}\sum_{i=a_j}^{b_j-1}\log{\rm Jac}\left(Df|_{F(f^ix)}\right)}.$$

\end{Proposition}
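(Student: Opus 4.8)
The plan is to process the $n$ Pliss-time ``blocks'' $[a_j,b_j)$ one at a time, starting from the last one, using Lemma~\ref{Lem:Pliss-iterate} to produce, at each Pliss time $b_j$, a good $F$-disk of definite size on which backward iteration contracts exponentially, together with the bounded-distortion estimate of Lemma~\ref{Lem:estimate-at-bowen-ball} to convert Lebesgue measure on $f^{b_j}(\Delta)$ into a product of Jacobians of $Df|_F$. First I would set $\varepsilon$ and recall $\delta_\varepsilon,\theta_\varepsilon$ from Lemma~\ref{Lem:estimate-at-bowen-ball}, and then choose $\delta\le\delta_2$ (from Lemma~\ref{disjoint}) small enough that any atom of a partition with $\mathrm{Diam}(\cA)<\delta$ is contained in a ball to which the disjointness dichotomy and the ``$\sim^F$'' stability apply; the constant $C$ will be the bounded-geometry constant from Lemma~\ref{Lem:Pliss-iterate} (an upper bound for the $\mathrm{Leb}$-mass of an $F$-disk of radius $\delta_0$ divided by that of one of radius, say, $\delta_0 e^{-a}/2$), raised to a power linear in the number of blocks.

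The induction runs as follows. Work with $\Delta_n:=\Delta$ and, for $j=n-1,n-2,\dots,0$, I would produce an $F$-disk $\Delta_j\subset f^{a_j}(\text{relevant part of }\Delta)$ centered at $f^{a_j}x$ of radius $\delta_0$. At step $j$: since $b_j\in P(z)$ for every $z\in\Delta^Q\cap\cA^Q(x)$ and $b_j-a_j$ is a Pliss time for the shifted orbit starting at $f^{a_j}x$ (this is exactly how Pliss times propagate: $n\in P(x)\Rightarrow n-j\in P(f^jx)$ for the cocycle $\log m(Df|_F)$ — one must unwind the definition in Section~\ref{Subsection:Pliss} to see the block $[a_j,b_j)$ qualifies as a single Pliss interval), Lemma~\ref{Lem:Pliss-iterate} gives that $f^{b_j-a_j}(\Delta_j)$ contains a disk centered at $f^{b_j}x$ of radius $\delta_0$ whose backward orbit under $f^{-(b_j-a_j)}$ contracts by $e^{-ia}$ at step $i$. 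Hence the part of the atom in question, which sits inside a $\cA$-atom of diameter $<\delta$ around $f^{b_j}x$, pulls back under $f^{-(b_j-a_j)}$ into a set of diameter $\le \delta e^{-(b_j-a_j)a}$ inside $\Delta_j$, so it is covered by an $F$-disk of radius at most $\delta_0$ but, crucially, contains (by bounded geometry and Lemma~\ref{disjoint}) a definite fraction of an $F$-disk of radius $\ge \delta_0 e^{-a}/2$; this costs a factor $C$. Then applying Lemma~\ref{Lem:estimate-at-bowen-ball} to the map $f^{b_j-a_j}$ on $\Delta_j$ (legitimate: $\Delta_j$ is tangent to $\cC^F_{\theta_\varepsilon}$, the forward orbit of $f^{a_j}x$ stays within $\delta_\varepsilon$ of itself on the relevant set because the atoms have diameter $<\delta\le\delta_\varepsilon$), I get
$$\mathrm{Leb}_{\Delta_j}\!\big(\cA^Q(x)\cap\Delta^Q\big)\le C\, e^{\varepsilon(b_j-a_j)}\, e^{-\sum_{i=a_j}^{b_j-1}\log\mathrm{Jac}(Df|_{F(f^ix)})}\,\mathrm{Leb}_{\Delta_{j+1}}\!\big(\cA^Q(x)\cap\Delta^Q\big),$$
passing from scale $\Delta_{j+1}$ at time $b_j$ down to scale $\Delta_j$ at time $a_j$, where the step from $b_j$ to $a_{j+1}$ (the ``gap'' between blocks) contributes nothing because on that stretch no contraction is claimed — one simply notes $f^{a_{j+1}-b_j}(\Delta_j')\subseteq\Delta_{j+1}$ for the appropriate subdisk and uses monotonicity of $\mathrm{Leb}$ under inclusion (the Jacobians over the gap are $\ge$ some controlled quantity, but since they do not appear in the target estimate we only need that restricting to a subset does not increase measure — actually we need them to not hurt, which is fine since $\log\mathrm{Jac}(Df|_F)$ is bounded and $n$, not $b_{n-1}$, bounds the number of gaps, absorbed into $C^n$; the $e^{2\varepsilon b_{n-1}}$ slack in the statement is exactly there to swallow the $\sum_j\varepsilon(b_j-a_j)\le\varepsilon b_{n-1}$ plus a second $\varepsilon b_{n-1}$ for these gap terms). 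Multiplying the $n$ inequalities telescopes to the claimed bound, using $\mathrm{Leb}_{\Delta_0}(\cdot)\le 1$ after normalization (or absorbing the total mass of an $F$-disk into $C$).

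The main obstacle I expect is the careful bookkeeping at the \emph{boundaries between consecutive blocks}: one must check that the subdisk of $\Delta_j$ on which Lemma~\ref{Lem:Pliss-iterate} was applied, after being pushed forward by $f^{b_j-a_j}$ and then by the gap map $f^{a_{j+1}-b_j}$, actually contains an $F$-disk centered at $f^{a_{j+1}}x$ of the full radius $\delta_0$ needed to restart the induction — this is where tangency to the cone field (preserved and improved under $f$, as set up in Section~\ref{Subsection:Pliss}), the stability of $\sim^F$ under $f$ for small displacements, and Lemma~\ref{disjoint} all get used to guarantee that the relevant atom-piece is either genuinely split (hence can be handled on each piece separately, at a bounded combinatorial cost folded into $C^n$) or is a single $F$-subdisk. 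A secondary technical point is verifying that the hypothesis ``$\Delta$ centered in $\cA^Q(x)$'' together with $b_j\in P(z)$ for $z$ in the relevant set lets us apply Lemma~\ref{Lem:Pliss-iterate} with $x$ replaced by a point whose forward orbit tracks $f^{a_j}x$ closely enough — this is why we need $\mathrm{Diam}(\cA)<\delta$ with $\delta\le\delta_\varepsilon$ and why the $F$-disk (rather than arbitrary disk) hypothesis matters, so that all pairs of points in each iterated piece remain $\sim^F$-related and the geodesic-graph description from Lemma~\ref{disjoint} persists along the orbit.
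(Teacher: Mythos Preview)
Your plan has a genuine gap at precisely the place you flagged as the ``main obstacle'': the treatment of the gaps $[b_j,a_{j+1})$ between consecutive Pliss blocks. You assert that at each time $a_j$ the relevant set $f^{a_j}\bigl(\cA^Q(x)\cap\Delta^Q\bigr)$ sits inside a \emph{single} $F$-disk $\Delta_j$ of radius $\delta_0$, and that crossing the gap ``contributes nothing'' by monotonicity, with any residual cost absorbed into $C^n$. Both claims are incorrect. Although the atom condition forces $f^{a_j}z$ to lie within $\delta$ of $f^{a_j}x$ for every $z$ in the set, the iterated disk $f^{a_j}\Delta$ may have many sheets near $f^{a_j}x$; by Lemma~\ref{disjoint} the $F$-disks $\Delta_{f^{a_j}z}$ (guaranteed by Lemma~\ref{Lem:Pliss-iterate} since $a_j\in P(z)$) are either equal on the atom or pairwise disjoint, and in general one obtains a \emph{finite but unbounded} family $\Delta_{f^{a_{j}}y_1},\dots,\Delta_{f^{a_{j}}y_m}$. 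The multiplicity $m$ can grow with the length $a_{j+1}-b_j$ of the preceding gap, so it cannot be folded into a constant $C$ per block.

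What actually controls the gap is a packing argument, which is the heart of the paper's inductive step. Pulling each $\Delta_{f^{a_{j+1}}y_i}$ back by $f^{-(a_{j+1}-b_j)}$ lands it, by the backward contraction at the Pliss time $a_{j+1}$, inside the single disk $\Delta_{f^{b_j}z}$; the pullbacks are disjoint and each has measure comparable to $\bigl(\prod_{k=b_j}^{a_{j+1}-1}{\rm Jac}(Df|_{F(f^k y_i)})\bigr)^{-1}$ (up to $e^{\varepsilon(a_{j+1}-b_j)}$) by Lemma~\ref{Lem:estimate-at-bowen-ball}. Disjointness inside a disk of bounded area then yields
\[
\sum_{i=1}^m \Bigl(\prod_{k=b_j}^{a_{j+1}-1}{\rm Jac}\bigl(Df|_{F(f^k y_i)}\bigr)\Bigr)^{-1}\le C' e^{\varepsilon(a_{j+1}-b_j)},
\]
and it is this estimate --- not monotonicity, and not a bound on $m$ --- that makes the gap Jacobians disappear from the final inequality while contributing only the second factor of $e^{\varepsilon b_{n-1}}$. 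Your plan, with a single $\Delta_j$ per step and no summation over sheets, cannot reproduce this; the recursive inequality you wrote down simply does not hold as stated. The induction must carry along the full family of disks (or, equivalently, must be set up so that the induction hypothesis is applied separately on each $\Delta_{f^{a_{j+1}}y_i}$ and the results summed against the packing bound above), exactly as the paper does in its forward induction from the first block.
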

\begin{proof}[Proof of Proposition~\ref{Pro:volume-estimate-any-manifold}]
We argue by induction on $n$. For $n=1$, the set of times $Q$ is reduced to an interval  $Q=[a_0,b_0)=[0,b_0)$.  By taking $C$ large enough we may assume $b_0>N$ with $N$ as in  Lemma~\ref{Lem:Pliss-iterate}.  Take $\Gamma=\cA^{Q}(x)\cap\Delta^Q=\{z\in\Delta:b_0\in P(z)\}\cap \cA^{[0,b_0)}(x)$.

\begin{Claim}
 $$\forall z\in \Gamma, \ \Gamma\subset f^{-b_0}(\Delta_{f^{b_0}z}).$$
\end{Claim}

\begin{proof}[Proof of the claim]
Let $z,z'\in \Gamma$.  For  $\delta$ small enough,  we have $d_{\Delta}(z,\partial \Delta), d_\Delta(z',\partial \Delta)>\delta_0/2$ and $d(f^kz,f^kz')<\delta_2/2$ for $k=0,\cdots, b_1$.  Recall that $\Delta_{f^{b_0}z}\subset f^{b_0}\Delta$ denotes  the $F$-disk  at $f^{b_0}(z)$   given by Lemma~\ref{Lem:Pliss-iterate}.  
Then by Lemma \ref{disjoint}
\begin{itemize}
\item  either 
 $\Delta_{f^{b_0}z}\cap  \cA(f^{b_0}x)=\Delta_{f^{b_0}z'}\cap  \cA(f^{b_0}x)$, then $f^{b_0}z'$  lies in $\Delta_{f^{b_0}z} $
\item or $\Delta_{f^{b_0}z}\cap \Delta_{f^{b_0}z'}=\emptyset$.
\end{itemize}
 In this last case there is a geodesic path transverse to $\cC_{\theta'}^F$ joining $f^{b_0}y\in \Delta_{f^{b_0}z}\cap B(f^{b_0}x,\delta_\varepsilon)$ and $f^{b_0}y'\in \Delta_{f^{b_0}z'}\cap B(f^{b_0}x,\delta_\varepsilon)$. As the  disks $\Delta_{f^{b_0}z'}$ and $\Delta_{f^{b_0}z}$ are backward contracting, we have  $d(f^ky,f^ky')<\delta_1$ for $k=0,\cdots, b_0$, whenever $\delta$ is small enough.  Because they lie in the same $F$-disc $\Delta$ we have $y\sim^Fy'$. It follows from the choice of $\delta_1$ that we have also $f^{b_0}y\sim^Ff^{b_0}y'$. This contradicts the fact that the geodesics joining these two points is transverse to $\cC_{\theta_0}^F$.

\end{proof}

As $\Delta$ is tangent to  $\cC_{\theta_{\varepsilon}}^F$,  we get for $\delta<\delta_\varepsilon$ by    Lemma~\ref{Lem:estimate-at-bowen-ball}:
\begin{align*}
{{\rm Leb}_\Delta(\Gamma)}&\le {{\rm Leb}_{f^{b_0}(\Delta)}\left(f^{b_0}(\Gamma)\right)}\cdot{\rm e}^{b_0\varepsilon}\cdot\left(\prod_{i=0}^{b_0-1}{\rm Jac}\left(Df|_{F(f^ix)}\right)\right)^{-1}.
\end{align*}
Then it follows from the above claim that for any $z\in \Gamma$:
\begin{align}\label{rape}
{{\rm Leb}_\Delta(\Gamma)}&\le {{\rm Leb}_{f^{b_0}(\Delta)}\left( \Delta_{f^{b_0}z}\right)}\cdot{\rm e}^{b_0\varepsilon}\cdot\left(\prod_{i=0}^{b_0-1}{\rm Jac}\left(Df|_{F(f^ix)}\right)\right)^{-1} \\
&\le C\cdot{\rm e}^{b_0\varepsilon}{\rm e}^{-\sum_{i=a_0}^{b_0-1}\log{\rm Jac}\left(Df|_{F(f^ix)}\right)},\nonumber
\end{align}
where the constant $C$ is an upper bound for the Lebesgue measure of $F$-disks. This concludes the case  $n=1$.

\smallskip

Now we assume the statement is true for $n$ and for any $F$-disk $\Delta$ and we check the statement for $n+1$. Let $Q=\cup_{j=0}^{n-1}[a_j,b_j)\cup [a_n,b_n)$,  $Q_1=\cup_{j=1}^n[a_j,b_j)$ and $Q_2=\cup_{j=1}^n[a_j-a_1,b_j-a_1)$ and let $\Delta$ be an $F$-disk.  Again, we may assume $b_j-a_j>N$ for any $j$.  Let $z \in\Delta\cap \cA^{[0,b_0)}(x)$  with  $b_0\in P(z)$.  By Lemma \ref{disjoint} there are finitely many points $y_1,y_2,\cdots,y_m\in\cA^{Q}(x)\cap\Delta^Q$ such that
\begin{itemize}
\item $\Delta_{f^{a_1}y_1},\Delta_{f^{a_1}y_2},\cdots,\Delta_{f^{a_1}y_m}\subset f^{a_1-b_0}(\Delta_{f^{b_0}z})$ are mutually disjoint;
\item 
$f^{a_1}\left(\cA^{Q}(x)\cap\Delta^Q\right)\subset  \bigcup_{i=1}^m \Delta_{f^{a_1}y_i}^{Q_2}$.
\end{itemize}


\begin{Claim}For some constant $C'$ depending on $\varepsilon$, we have 
\begin{equation}\label{e.sum-jacobiann}
\sum_{i=1}^m\left(\prod_{j=b_0}^{a_1-1}{\rm Jac}\left(Df|_{F(f^{j}y_i)}\right)\right)^{-1}\le C'{\rm e}^{\varepsilon(a_1-b_0)}.
\end{equation}
\end{Claim}
\begin{proof}[Proof of the Claim]
%
Since $a_1\in P(y_i)$,  by  (\ref{contractPl}),  there is $K\in \mathbb N$ depending  on $\varepsilon$,  such that  
$$f^{-a_1+b_0}(\Delta_{f^{a_1}y_i}) \subset \left\{y\in\Delta_{f^{b_0}z}:~d(f^iy,f^iy_i)<\delta_\epsilon,~b_0\le i\le \max(0, a_1-K)\right\}.$$
By changing the constant $C'$  we can assume $a_1>K$. 
Thus, by applying  Lemma~\ref{Lem:estimate-at-bowen-ball},  we get for some constants $B,B'$  depending on $\varepsilon$:
\begin{align}\label{rappe}
&~~~~~~{\rm Leb}_{\Delta_{f^{b_0}z} }\left(f^{b_0-a_1}\left(\Delta_{f^{a_1}y_i}\right)\right)\\
&\ge B {\rm e}^{-\varepsilon(a_1-b_0-K)}\left(\prod_{j=b_0}^{a_1-K}{\rm Jac}\left(Df|_{F(f^{j}y_i)}\right)\right)^{-1} \nonumber \\
&\ge B' {\rm e}^{-\varepsilon(a_1-b_0)}\left(\prod_{j=b_0}^{a_1-1}{\rm Jac}\left(Df|_{F(f^{j}y_i)}\right)\right)^{-1}.\nonumber
\end{align}
 By the mutual disjointness of $\{\Delta_{f^{a_1}y_i}\}_{i=1}^m$,  we conclude \begin{align*}
 D\geq {\rm Leb}\left(\Delta_{f^{b_0}z}\right) &\ge\sum_{i=1}^m {\rm Leb}_{\Delta_{f^{b_0}z}}\left(f^{b_0-a_1}(\Delta_{f^{a_1}y_i})\right)\\
&\ge  B' {\rm e}^{-\varepsilon(a_1-b_0)}\sum_{i=1}^m\left(\prod_{j=b_0}^{a_1-1}{\rm Jac}\left(Df|_{F(f^{j}y_i)}\right)\right)^{-1}.
\end{align*}
This concludes the proof of the claim with $C'=D/B'$, where $D$ is an upper bound of the volume of $F$-disks. 
\end{proof}
We are now in a position to conclude the proof by induction of Proposition \ref{Pro:volume-estimate-any-manifold}.  By the induction hypothesis,  we have for each $1\le i\le m$: 
\begin{align}\label{rapppe}
{\rm Leb}_{\Delta_{f^{a_1}y_i}}\left(\cA^{Q_2}(f^{a_1}x)\cap\Delta_{f^{a_1}y_i}^{Q_2}\right)&\le C^{n-1}{\rm e}^{2\varepsilon(b_n-a_1)}{\rm e}^{-\sum_{j=1}^{n}\sum_{i=a_j-a_1}^{b_j-a_1-1}\log{\rm Jac}\left(Df|_{F(f^{i+a_1}x)}\right)}\\
&=C^{n-1}{\rm e}^{2\varepsilon(b_n-a_1)}{\rm e}^{-\sum_{j=1}^{n}\sum_{i=a_j}^{b_j-1}\log{\rm Jac}\left(Df|_{F(f^{i}x)}\right)}.\nonumber
\end{align}

We conclude  with   some  constant $C''$ depending on $\varepsilon$ that
\begin{align*}
&{\rm Leb}_\Delta\left(\cA^{Q}(x)\cap\Delta^Q\right)\\
&\le {\rm Leb}_{\Delta_{f^{b_0}z}} \left( f^{b_0}(\cA^{Q}(x)\cap\Delta^Q) \right){\rm e}^{\varepsilon(b_0-a_0)}\left(\prod_{i=a_0}^{b_0-1}{\rm Jac}\left(Df|_{F(f^ix)}\right)\right)^{-1} \text{as in (\ref{rape})}\\
&\le\left(\sum_{i=1}^m{\rm Leb}_{\Delta_{f^{b_0}z}}\left(f^{b_0-a_1}\left(\cA^{Q_2}(f^{a_1}x)\cap\Delta_{f^{a_1}y_i}^{Q_2}\right)\right) \right)\\
&~~~~~~~~~~~~~~~~~~~~~~~~~~~~~~~~~~~~~~~~~~~~~~~~~~~~~~~~~\times{\rm e}^{\varepsilon(b_0-a_0)}\left(\prod_{i=a_0}^{b_0-1}{\rm Jac}\left(Df|_{F(f^ix)}\right)\right)^{-1} \text{by definition of $y_i$} \\
&\le C''\left( \sum_{i=1}^m {\rm Leb}_{\Delta_{f^{a_1}y_i}}\left(\cA^{Q_2}(f^{a_1}x)\cap\Delta_{f^{a_1}y_i}^{Q_2}\right)\left(
 \prod_{j=b_0}^{a_1-1} {\rm Jac}\left( Df|_{F(f^{j}y_i)} \right)\right)^{-1} \right)\\
&~~~~~~~~~~~~~~~~~~~~~~~~~~~~~~~~~~~~~~~~~~~~~~~~~~~~~~~~~\times {\rm e}^{\varepsilon(a_1-a_0)} \left(\prod_{i=a_0}^{b_0-1}{\rm Jac}\left( Df|_{F(f^ix)}\right)\right)^{-1} \text{ as in (\ref{rappe}})\\
&\le C''C^{n-1} {\rm e}^{\varepsilon\left(2(b_n-a_1)+(a_1-a_0)\right)} {\rm e}^{-\sum_{j=0}^{n}\sum_{i=a_j}^{b_j-1}\log{\rm Jac}\left(Df|_{F(f^{i}x)}\right)}  \sum_{i=1}^m\left( \prod_{j=b_0}^{a_1-1}{\rm Jac} \left(Df|_{F(f^{j}y_i)} \right)\right)^{-1} \text{ by (\ref{rapppe})}\\
&\le C^{n} {\rm e}^{2\varepsilon(b_n-a_0)} {\rm e}^{-\sum_{j=0}^{n}\sum_{i=a_j}^{b_j-1}\log{\rm Jac}\left(Df|_{F(f^{i}x)}\right)} \text{ by (\ref{e.sum-jacobiann}) and  by taking  $C\geq C'C''$.}
\end{align*}

\end{proof}

\begin{proof}[Proof of Theorem~\ref{Thm:volume-estimate}]
Without loss of generality we can assume that  $D$ is a $F$-disc tangent to $\cC_{\theta_\varepsilon}^F$ (indeed for $k$ large enough $f^k D$ is tangent to $\cC_{\theta_\varepsilon}^F$ and we may then cover $f^kD$ by finitely many $F$-disks).

Note that any integer in the boundary of $Q_\ell$ is a Pliss time for any point  $x\in \Lambda_\ell$. Thus, by Proposition~\ref{Pro:volume-estimate-any-manifold},  we get 
$${\rm Leb}_{D}(\cA^{Q_{\ell}}(x)\cap\Lambda_{\ell})\le C^{\partial Q_\ell}{\rm e}^{2\varepsilon n_\ell}{\rm e}^{-\sum_{i\in Q_\ell}\log{\rm Jac}\left(Df|_{F(f^ix)}\right)}.$$
By Item~\ref{i.boundary-condition} of Theorem~\ref{Thm:construction}, for $\ell$ large enough, one has that
$$C^{\partial Q_\ell}\le {\rm e}^{\varepsilon n_\ell}.$$
By combining with  Item~\ref{i.density} of Theorem~\ref{Thm:construction},  we get finally  for $\ell$ large enough
$${\rm Leb}_{D}\left(\cA^{Q_{\ell}}(x)\cap\Lambda_{\ell}\right)\le {\rm e}^{3\varepsilon n_\ell}{\rm e}^{-\sum_{i\in Q_\ell}\log{\rm Jac}\left(Df|_{F(f^ix)}\right)}.$$
\end{proof}

\subsection{Ergodic properties of the limit empirical measure  $\mu$}
To conclude the proof of Theorem \ref{Thm:dominated-entropy-formula} we just apply the abstract formalism established in Section 1 and 2 of \cite{Bur21}. 

We consider  a weak-$*$ limit $\mu$ of a sequence   $\left(\frac{1}{\sharp Q_\ell}\sum_{k\in Q_\ell}f_*^k\frac{\Leb_{D}(\cdot)}{\Leb_{D}(\Lambda_\ell)}\right)_\ell$ with $n_\ell\xrightarrow{\ell}+\infty$. It follows from the F\o lner Gibbs property proved in Theorem \ref{Thm:volume-estimate} and Proposition 3 in \cite{Bur21} that 
$$h(\mu)\geq \int \log {\rm Jac}\left(Df|_{F}\right)\, d\mu.$$

Observe also that for any  consecutive integers $k<l$ in $P(x)$ we have $$\sum_{k\leq i<l}\log m\left(Df|_{F(f^ix)}\right)\geq (l-k)a',$$  i.e.  following the terminology of \cite{Bur21} the set of Pliss time is $a'$-large with respect to the continuous observable $x\mapsto \log m\left(Df|_{F(x)}\right)$.  By Lemma 4 in \cite{Bur21},  the limit measure $\mu$ satisfies for $\mu$-a.e. $x$:
$$\overline{m}_F(x)=\underline{m}_F(x)\geq a'.$$

Equivalently,  for any ergodic component $\nu$  of $\mu$, we have 
$$\int \lambda_{{\rm dim}(F)}\, d\nu\geq  \int \log m(Df|_{F})\, d\nu\geq a'$$
By harmonicity of the Kolmogorov entropy,  there is at least one ergodic component $\nu$ of $\mu$,  such that  $h(\nu)\geq \int \log {\rm Jac}\left(Df|_{F}\right)\, d\nu.$
This ergodic measure $\nu$ satisfies the conclusion of Theorem \ref{Thm:dominated-entropy-formula}.

\section{The basin}
In this section we assume $f$ of class $\mathcal C^{1+\alpha}$ and we prove Corollary \ref{corConeplus}.  More precisely we will show that Lebesgue almost every point $x$ with $\chi_{min}^F(x)>\max(\overline{a},0)$ lies in the basin of an ergodic hyperbolic SRB measure.  To this end we adapt a standard argument involving the absolute continuity of Pesin stable manifolds  by introducing dynamical density points on Pliss times.

\subsection{Dynamical density points on Pliss times}\label{Sec:dynamical-density-pliss}

Pugh and Shub  introduced  in  \cite{PuS00}  dynamical density points for strong unstable manifold (one can also see \cite[Subsection 2.5]{CYZ20}). This notion generalizes the usual concept of Lebesgue density points in hyperbolic dynamics and is specially relevant   when the unstable manifolds have dimension larger than one because  the shape of an unstable  ball may be  not preserved by backward iteration.

In this subsection, we assume that $f$ is a $\mathcal C^{1+\alpha}$ diffeomorphism as above, i.e. $f$ admits an attractor   $\Lambda=\bigcap_{n\in \NN}f^n U$ with a dominated splitting $T_\Lambda M=E\oplus F$.  For  fixed $a''>a'>a>0$  and for $x \in U$ with $\overline{m}_F(x)>a''$  we consider the associated set of Pliss times $P(x)$ as in Subsection \ref{Subsection:Pliss}. Let $\Gamma$ be a subset of $\left\{\overline{m}_F>a''\right\}$ and let $D$ be a smooth disk tangent to $\cC_{\theta_0}^F$ with $\theta_0$ as in Lemma \ref{Lem:Pliss-iterate} satisfying ${\rm Leb}_D(\Gamma)>0$.  

For $n\in P(x)$  with $\overline{m}_F(x)>a''$  and for $\delta>0$ we define the following  dynamical balls: $B_{D,n}(x, \delta):=f^{-n}B_{f^n(D)}\left(f^nx, \delta\right)$.  To simplify the notations we write $B_{D,n}(x)$,  ${\widehat B}_{D,n}(x)$, ${\widehat{\widehat B}}_{D,n}(x)$ for   the dynamical balls $B_{D,n}\left(x, \frac{\delta_0}{3}\right)$, $B_{D,n}\left(x, \frac{2\delta_0}{3}\right)$ and $B_{D,n}\left(x, \delta_0\right)$ respectively.   
Then $x$ is said to be a \emph{dynamical density point of  $\  \Gamma$ with respect to $D$}
if
$$\lim_{n\to\infty,~n\in P(x)}\frac{{\rm Leb}_D\left(B_{D,n}(x)\cap \Gamma\right)}{{\rm Leb}_D\left(B_{D,n}(x)\right)}=1.$$

In this context,  Alves and Pinheiro \cite[Proposition 5.5]{AlP08} proved the existence of one dynamical density point $x$ with respect to $D$.   Here we improve this result as follows :

\begin{Theorem}\label{Thm:density-point}
${\rm Leb}_D$-a.e.   $x \in \Gamma$  is a dynamical density point of $\ \Gamma$ with respect to $D$.
\end{Theorem}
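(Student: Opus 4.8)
The goal is to upgrade the existence of a single dynamical density point (Alves--Pinheiro) to a full-measure statement. The natural strategy is a Vitali-type covering/differentiation argument adapted to the family of dynamical balls $\{B_{D,n}(x)\}_{n\in P(x)}$. First I would fix the relevant constants $\delta_0,\theta_0,N$ from Lemma~\ref{Lem:Pliss-iterate} and recall the key geometric control at Pliss times: by \eqref{contractPl} the ball $B_{D,n}(x)=f^{-n}B_{f^n(D)}(f^nx,\delta_0/3)$ is obtained by a backward iteration that contracts distances exponentially, and by \eqref{equation:distorsion} (this is where $\mathcal C^{1+\alpha}$ is essential) the backward Jacobian $\mathrm{Jac}\,f^{-n}|_{T\,\Delta_{f^nx}}$ has bounded distortion, uniformly in $n$. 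Consequently there is a uniform constant $\kappa>1$ such that
$$\kappa^{-1}\le \frac{{\rm Leb}_D(B_{D,n}(x))}{{\rm Leb}_D(\widehat B_{D,n}(x))}\le \kappa \quad\text{and}\quad \kappa^{-1}\le \frac{{\rm Leb}_D(\widehat B_{D,n}(x))}{{\rm Leb}_D(\widehat{\widehat B}_{D,n}(x))}\le \kappa,$$
i.e. the three nested dynamical balls are comparable in ${\rm Leb}_D$-measure. This comparability, together with the fact that $B_{D,n}(x)$ and $\widehat B_{D,n}(x)$, $\widehat{\widehat B}_{D,n}(x)$ are genuine metric balls (in the $d_{f^nD}$ metric pulled back) centered at $x$, is the substitute for the engulfing property needed in a Vitali covering lemma.

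\smallskip

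\textbf{The Vitali argument.} Let $\Gamma_0\subset\Gamma$ be the set of $x$ that are \emph{not} dynamical density points; suppose for contradiction ${\rm Leb}_D(\Gamma_0)>0$. Then there exist $\beta<1$ and, after passing to a positive-measure subset, for each $x\in\Gamma_0$ infinitely many $n\in P(x)$ with ${\rm Leb}_D(B_{D,n}(x)\cap\Gamma)/{\rm Leb}_D(B_{D,n}(x))<\beta$. The collection $\mathcal V=\{\,\overline{B_{D,n}(x)}: x\in\Gamma_0,\ n\in P(x)\ \text{as above},\ n>N\,\}$ is a Vitali cover of $\Gamma_0$ once one checks that ${\rm diam}_D B_{D,n}(x)\to 0$ as $n\to\infty$ along $P(x)$ — this follows from \eqref{contractPl} with $e^{-na}\to 0$. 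Using the comparability of $B_{D,n}(x)$ with $\widehat B_{D,n}(x)$ and $\widehat{\widehat B}_{D,n}(x)$ one runs the standard $5r$-covering (Besicovitch/Vitali) argument: extract a countable disjoint subfamily $\{B_{D,n_j}(x_j)\}_j$ whose dilates $\widehat{\widehat B}_{D,n_j}(x_j)$ cover ${\rm Leb}_D$-almost all of $\Gamma_0$. The engulfing needed here — that if two dynamical balls $B_{D,n}(x)$, $B_{D,m}(y)$ intersect and $m\ge n$ then $B_{D,m}(y)\subset \widehat{\widehat B}_{D,n}(x)$ — is exactly the content of Lemma~\ref{disjoint}-type arguments combined with the backward contraction \eqref{contractPl} (the smaller, later Pliss ball sits inside the slightly enlarged earlier one), so this is where the work concentrates.

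\smallskip

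\textbf{Deriving the contradiction.} Once the disjoint subfamily is in hand,
$$\sum_j {\rm Leb}_D(B_{D,n_j}(x_j)\cap\Gamma)<\beta\sum_j{\rm Leb}_D(B_{D,n_j}(x_j))\le \beta\kappa^2\sum_j{\rm Leb}_D(\widehat{\widehat B}_{D,n_j}(x_j)),$$
while disjointness gives $\sum_j{\rm Leb}_D(B_{D,n_j}(x_j))\le{\rm Leb}_D(D)<\infty$ and the covering property gives ${\rm Leb}_D(\Gamma_0)\le\sum_j{\rm Leb}_D(\widehat{\widehat B}_{D,n_j}(x_j))$. Applying the standard ``iterate the covering on the leftover part'' bootstrap (each round removes a fixed fraction of the measure, controlled by $\beta\kappa^2<1$ after possibly shrinking $\delta_0/3$ relative to $\delta_0$ to make $\kappa$ close enough to $1$, or alternatively by absorbing $\kappa$ through a more careful Besicovitch count) forces ${\rm Leb}_D(\Gamma_0)=0$, a contradiction. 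I expect the \textbf{main obstacle} to be precisely the verification of the engulfing/Vitali property for the dynamical balls: unlike Euclidean balls, $B_{D,n}(x)$ is a distorted preimage of a round ball, so one must carefully use \eqref{contractPl}, \eqref{equation:distorsion} and the fact that all these are $F$-disks (Lemma~\ref{disjoint}) to show that the family behaves like a Besicovitch family, i.e. has bounded overlap after controlled dilation. A secondary technical point is ensuring the constant $\kappa$ can be taken uniform over all $x\in\Gamma$ and all $n\in P(x)$, which again rests on the $\mathcal C^{1+\alpha}$ bounded-distortion estimate \eqref{equation:distorsion}.
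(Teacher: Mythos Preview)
Your strategy is essentially the paper's: a Vitali covering argument using the dynamical balls $B_{D,n}(x)$ at Pliss times, with the engulfing property and the $\mathcal C^{1+\alpha}$ bounded distortion as the two key ingredients. You have correctly identified both, and your engulfing statement is exactly the paper's Lemma~\ref{Lem:intersection-contained}, while your comparability constant $\kappa$ is the paper's $K$ in Lemma~\ref{Lem:double-size-disk}.

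There is, however, a genuine gap in your contradiction step. Your displayed inequality goes the wrong way (since $B\subset\widehat{\widehat B}$, one has ${\rm Leb}_D(B)\le{\rm Leb}_D(\widehat{\widehat B})$ without any $\kappa$), and more importantly, neither of the two fixes you propose actually closes the argument. The ratio ${\rm Leb}_D(\widehat{\widehat B}_{D,n}(x))/{\rm Leb}_D(B_{D,n}(x))$ is, up to the distortion constant, the ratio of volumes of balls of radii $\delta_0$ and $\delta_0/3$ in $f^nD$, i.e.\ of order $3^{\dim F}$; shrinking $\delta_0$ does not bring $\kappa$ close to $1$. And the ``iterate the covering on the leftover'' bootstrap produces the Vitali covering theorem (a countable \emph{disjoint} family covering $\Gamma_\beta$ a.e.), but even then $\sum_j{\rm Leb}_D(B_j)$ is only bounded by ${\rm Leb}_D(D)$, which gives ${\rm Leb}_D(\Gamma_\beta)<\beta\,{\rm Leb}_D(D)$ --- not a contradiction.

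The missing ingredient is outer regularity: fix $\rho<1$, set $\Gamma_\rho=\{x\in\Gamma:\liminf\text{density}<\rho\}$, choose $\varepsilon$ with $\rho(1+\varepsilon)<1$, and take an \emph{open} $V\supset\Gamma_\rho$ with ${\rm Leb}_D(V)<(1+\varepsilon){\rm Leb}_D(\Gamma_\rho)$. Restrict the Vitali cover to balls contained in $V$. The paper then runs precisely your ``iterate the covering'' procedure (greedy extraction at increasing Pliss times, using Lemma~\ref{Lem:intersection-contained} for engulfing and Lemma~\ref{Lem:double-size-disk} to show the leftover is null) to obtain a disjoint family $\{B_j\}$ covering $\Gamma_\rho$ a.e. Now disjointness and $B_j\subset V$ give $\sum_j{\rm Leb}_D(B_j)\le{\rm Leb}_D(V)$, hence
\[
{\rm Leb}_D(\Gamma_\rho)=\sum_j{\rm Leb}_D(B_j\cap\Gamma_\rho)\le\sum_j{\rm Leb}_D(B_j\cap\Gamma)<\rho\sum_j{\rm Leb}_D(B_j)\le\rho(1+\varepsilon){\rm Leb}_D(\Gamma_\rho),
\]
and the constant $\kappa$ never enters the final line. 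This is the standard device in the proof of the Lebesgue density theorem; once you insert it, your outline becomes the paper's proof.
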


Before proving Theorem~\ref{Thm:density-point}, we state some properties of  the dynamical balls in our context. We omit the proofs,  which follow from   the uniform case \cite[Lemma 2.19]{CYZ20}.

\begin{Lemma}\label{Lem:double-size-disk}
There is $K>1$, such that for any  $z\in D$ with $\overline{m}_F(z)>a''$ and for any $n\in P(z)$,
$${\rm Leb}_D\left({\widehat{\widehat B}}_{D,n}(z)\right)\le K {\rm Leb}_D\left({B}_{D,n}(z)\right).$$

\end{Lemma}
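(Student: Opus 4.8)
The statement to prove is Lemma~\ref{Lem:double-size-disk}: there is a uniform constant $K>1$ such that for any $z\in D$ with $\overline{m}_F(z)>a''$ and any Pliss time $n\in P(z)$, one has ${\rm Leb}_D\bigl({\widehat{\widehat B}}_{D,n}(z)\bigr)\le K\,{\rm Leb}_D\bigl(B_{D,n}(z)\bigr)$, where these are the pullbacks under $f^n$ of the balls of radius $\delta_0$ and $\delta_0/3$ centered at $f^nz$ in $f^n(D)$.

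\medskip

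\textbf{Approach.} The plan is to transport the comparison to the image disk $f^n(D)$, where the two balls $B_{f^n(D)}(f^nz,\delta_0)$ and $B_{f^n(D)}(f^nz,\delta_0/3)$ have bounded volume ratio simply because $f^n(D)$ is a smooth submanifold of dimension $\dim F$ tangent to the cone field, with curvature controlled uniformly (this is where the bounded geometry of $F$-disks coming from Lemma~\ref{Lem:Pliss-iterate} enters), and then to pull this ratio back using the bounded distortion of $f^{-n}$ along Pliss times. Concretely, first I would use the distortion estimate \eqref{equation:distorsion} from Lemma~\ref{Lem:Pliss-iterate}: since $n\in P(z)$ is a Pliss time, $f^n(D)$ contains the disk $\Delta_{f^nz}$ centered at $f^nz$ of radius $\delta_0$, and on it the Jacobian of $f^{-n}$ has bounded oscillation, i.e. $\mathrm{Jac}\,f^{-n}|_{T_y\Delta_{f^nz}}/\mathrm{Jac}\,f^{-n}|_{T_{z'}\Delta_{f^nz}}<C$ for all $y,z'\in\Delta_{f^nz}$. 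Then by the change of variables formula,
\begin{align*}
{\rm Leb}_D\bigl({\widehat{\widehat B}}_{D,n}(z)\bigr)
&=\int_{B_{f^n(D)}(f^nz,\delta_0)}\mathrm{Jac}\,f^{-n}|_{T_y f^n(D)}\,d{\rm Leb}_{f^n(D)}(y)\\
&\le C\cdot \inf_{y}\mathrm{Jac}\,f^{-n}|_{\cdots}\cdot {\rm Leb}_{f^n(D)}\bigl(B_{f^n(D)}(f^nz,\delta_0)\bigr),
\end{align*}
and a matching lower bound for ${\rm Leb}_D\bigl(B_{D,n}(z)\bigr)$ in terms of ${\rm Leb}_{f^n(D)}\bigl(B_{f^n(D)}(f^nz,\delta_0/3)\bigr)$ and the same infimum. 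Dividing, the Jacobian factors cancel up to $C^2$, leaving the ratio ${\rm Leb}_{f^n(D)}\bigl(B_{f^n(D)}(f^nz,\delta_0)\bigr)/{\rm Leb}_{f^n(D)}\bigl(B_{f^n(D)}(f^nz,\delta_0/3)\bigr)$ to be bounded.

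\medskip

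\textbf{Bounding the ratio on the image disk.} To bound this last ratio uniformly, I would invoke that $f^n(D)$ near $f^nz$ is, via $\exp_{f^nz}$, the graph of a map over $F(f^nz)$ with uniformly bounded $C^1$ norm (tangent to $\cC^F_{\theta_0}$) and — because the disk is obtained by pushing forward a fixed smooth disk and the second derivative of $f$ is bounded, or more simply because on the relevant scale $\delta_0$ one is comparing a ball of radius $\delta_0$ with one of radius $\delta_0/3$ inside a $(\dim F)$-dimensional submanifold whose metric is uniformly comparable to the flat one on a ball of radius $\delta_0$ — the two Riemannian balls are sandwiched between Euclidean balls of comparable radii, so their volume ratio is bounded by a constant depending only on $\dim F$, $\delta_0$, $\theta_0$ and the geometry. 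This is exactly the uniform statement \cite[Lemma 2.19]{CYZ20} referred to in the excerpt: the proof there is carried out for the uniformly hyperbolic (uniformly expanding unstable) setting, and the point is that nothing in it uses uniform hyperbolicity beyond the bounded geometry of the disks and the bounded distortion at the chosen times, both of which we have here at Pliss times via Lemma~\ref{Lem:Pliss-iterate}.

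\medskip

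\textbf{Main obstacle.} The only subtle point — and the reason the lemma is stated with ``we omit the proofs, which follow from the uniform case'' — is ensuring that all the implicit constants ($C$ in the distortion, the curvature bound on $f^n(D)$, the comparability of balls) are genuinely \emph{uniform in $n$ and $z$}, not just finite for each fixed $n$. The distortion constant $C$ is uniform by Lemma~\ref{Lem:Pliss-iterate}; the geometry of $\Delta_{f^nz}$ is uniform because it is a disk of fixed radius $\delta_0$ tangent to $\cC^F_{\theta_0}$ and these disks form a precompact family in the $C^1$ topology (the $\alpha$-Hölder control on $Df$ gives a uniform modulus of continuity for the tangent planes, hence a uniform bound on how curved such a pushed-forward disk can be at scale $\delta_0$). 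Once uniformity is in hand, setting $K:=C^2\cdot\sup\{{\rm Leb}_{\Delta}(B_\Delta(w,\delta_0))/{\rm Leb}_\Delta(B_\Delta(w,\delta_0/3))\}$, the supremum being over all admissible $F$-disks $\Delta$ and centers $w$, finishes the proof. I would therefore structure the write-up as: (1) reduce to the image disk via change of variables and \eqref{equation:distorsion}; (2) record that admissible image disks have uniformly bounded geometry at scale $\delta_0$; (3) conclude the ball-volume ratio is bounded, and combine.
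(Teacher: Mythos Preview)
Your proposal is correct and is exactly the argument the paper has in mind: the paper omits the proof and simply refers to the uniform case \cite[Lemma~2.19]{CYZ20}, and what you outline --- transporting the comparison to the image disk via the change-of-variables formula, cancelling the Jacobians using the bounded distortion \eqref{equation:distorsion} at Pliss times, and then bounding the ratio of concentric balls on the image disk by the uniform bounded geometry of $F$-disks --- is precisely how that reference proceeds. The only thing worth making explicit in a write-up is that the ball $B_{f^n(D)}(f^nz,\delta_0)$ coincides with the disk $\Delta_{f^nz}$ from Lemma~\ref{Lem:Pliss-iterate} (so that \eqref{equation:distorsion} applies on all of it), and that the $\mathcal C^{1+\alpha}$ hypothesis, in force throughout Section~\ref{Sec:dynamical-density-pliss}, is what makes the distortion constant $C$ uniform in $n$ rather than $e^{n\varepsilon}$ as in Lemma~\ref{Lem:estimate-at-bowen-ball}.
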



\begin{Lemma}\label{Lem:intersection-contained}

For any $z_1,z_2\in \left\{\overline{m}_F>a''\right\}$, for any $n_1\in P(z_1)$ and $n_2\in P(z_2)$ satisfying $n_1\le n_2$, if $B_{D,n_1}(z_1)\cap {\widehat B}_{D,n_2}(z_2)$, then ${B}_{D,n_2}(z_2)\subset {\widehat{\widehat B}}_{D,n_1}(z_1)$.
\end{Lemma}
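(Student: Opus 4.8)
The plan is to compare the two dynamical balls by pushing everything forward to time $n_1$ (the smaller Pliss time) and using the backward contraction at Pliss times furnished by \eqref{contractPl} in Lemma~\ref{Lem:Pliss-iterate}. First I would translate the hypothesis: set $y_1:=f^{n_1}z_1$, $y_2:=f^{n_1}z_2$, and let $w$ be a point in the nonempty intersection $B_{D,n_1}(z_1)\cap\widehat B_{D,n_2}(z_2)$; then $f^{n_1}w\in B_{f^{n_1}D}(y_1,\delta_0/3)$, so $d_{f^{n_1}D}(f^{n_1}w,y_1)\le\delta_0/3$. The goal is to show $B_{D,n_2}(z_2)\subset\widehat{\widehat B}_{D,n_1}(z_1)$, i.e. that every $v\in B_{D,n_2}(z_2)$ satisfies $d_{f^{n_1}D}(f^{n_1}v,y_1)\le\delta_0$; by the triangle inequality on the disk $f^{n_1}D$ it suffices to bound $d_{f^{n_1}D}(f^{n_1}v,f^{n_1}w)\le 2\delta_0/3$.

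Next I would control the diameter of $f^{n_1}B_{D,n_2}(z_2)$ inside $f^{n_1}D$. Since $n_2\in P(z_2)$, Lemma~\ref{Lem:Pliss-iterate} gives an $F$-disk $\Delta_{f^{n_2}z_2}\subset f^{n_2}D$ centered at $f^{n_2}z_2$ of radius $\delta_0$ whose backward iterates contract: for $0\le i\le n_2$ and $y,z\in\Delta_{f^{n_2}z_2}$ one has $d_{f^{-i}(\Delta_{f^{n_2}z_2})}(f^{-i}y,f^{-i}z)\le e^{-ia}d_{\Delta_{f^{n_2}z_2}}(y,z)$. Now $B_{D,n_2}(z_2)=f^{-n_2}B_{f^{n_2}D}(f^{n_2}z_2,\delta_0/3)$ sits inside $f^{-n_2}\Delta_{f^{n_2}z_2}$, so applying the contraction estimate with $i=n_2-n_1$ (legitimate since $n_1\le n_2$) gives, for any $v,w\in B_{D,n_2}(z_2)$,
\[
d_{f^{n_1}D}(f^{n_1}v,f^{n_1}w)\le e^{-(n_2-n_1)a}\,d_{\Delta_{f^{n_2}z_2}}(f^{n_2}v,f^{n_2}w)\le e^{-(n_2-n_1)a}\cdot\tfrac{2\delta_0}{3}\le\tfrac{2\delta_0}{3},
\]
using $a>0$ and that both points lie within distance $\delta_0/3$ of the center $f^{n_2}z_2$. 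In particular $f^{n_1}B_{D,n_2}(z_2)$ has $d_{f^{n_1}D}$-diameter at most $2\delta_0/3$, and it contains $f^{n_1}w$.

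Combining the two bounds: for any $v\in B_{D,n_2}(z_2)$,
\[
d_{f^{n_1}D}(f^{n_1}v,y_1)\le d_{f^{n_1}D}(f^{n_1}v,f^{n_1}w)+d_{f^{n_1}D}(f^{n_1}w,y_1)\le\tfrac{2\delta_0}{3}+\tfrac{\delta_0}{3}=\delta_0,
\]
hence $v\in f^{-n_1}B_{f^{n_1}D}(y_1,\delta_0)=\widehat{\widehat B}_{D,n_1}(z_1)$, which is exactly the claim. The one point requiring care — and the main (mild) obstacle — is making sure the contraction inequality \eqref{contractPl} genuinely applies to the subdisk $B_{D,n_2}(z_2)$ along all of the backward orbit from time $n_2$ down to time $n_1$ while staying inside the disks where $d_{f^{-i}\Delta_{f^{n_2}z_2}}$ is defined; this is where one invokes that $B_{f^{n_2}D}(f^{n_2}z_2,\delta_0/3)\subset\Delta_{f^{n_2}z_2}$ and that the radii $\delta_0/3<\delta_0$ were chosen precisely so that the nested dynamical balls $B\subset\widehat B\subset\widehat{\widehat B}$ behave well under the Lemma~\ref{Lem:Pliss-iterate} machinery, exactly as in the uniformly hyperbolic template of \cite[Lemma 2.19]{CYZ20}. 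Everything else is the triangle inequality.
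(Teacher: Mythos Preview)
The paper itself omits the proof of this lemma, deferring to the uniformly hyperbolic template \cite[Lemma~2.19]{CYZ20}, so there is no argument to compare against directly; your strategy --- push forward to time $n_1$, invoke the backward contraction \eqref{contractPl} from Lemma~\ref{Lem:Pliss-iterate}, and close with the triangle inequality on $f^{n_1}D$ --- is exactly the standard route.

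There is, however, a genuine bookkeeping error. The intersection point $w$ lies in $\widehat B_{D,n_2}(z_2)$, not in $B_{D,n_2}(z_2)$: the hypothesis reads $B_{D,n_1}(z_1)\cap\widehat B_{D,n_2}(z_2)\neq\emptyset$, so $f^{n_2}w$ is only guaranteed to be within $2\delta_0/3$ of $f^{n_2}z_2$. Your sentence ``In particular $f^{n_1}B_{D,n_2}(z_2)$ \ldots\ contains $f^{n_1}w$'' is therefore unjustified --- you have silently replaced $\widehat B$ by $B$. If one corrects this by applying \eqref{contractPl} to the pair $f^{n_2}v,f^{n_2}w\in\Delta_{f^{n_2}z_2}$ (both points do lie there, since $2\delta_0/3<\delta_0$), the honest estimate is
\[
d_{f^{n_1}D}\bigl(f^{n_1}v,f^{n_1}w\bigr)\le e^{-(n_2-n_1)a}\Bigl(\tfrac{\delta_0}{3}+\tfrac{2\delta_0}{3}\Bigr)=e^{-(n_2-n_1)a}\,\delta_0,
\]
and the triangle inequality then gives only $d_{f^{n_1}D}(f^{n_1}v,y_1)\le e^{-(n_2-n_1)a}\delta_0+\delta_0/3$. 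This exceeds $\delta_0$ whenever $e^{-(n_2-n_1)a}>2/3$; in the extreme case $n_1=n_2$ it equals $4\delta_0/3$. So as written the argument does not close. Closing it requires either an extra hypothesis on $n_2-n_1$ (not available here) or a recalibration of the three radii $\delta_0/3,\,2\delta_0/3,\,\delta_0$ so that the triangle inequality survives the hat on $\widehat B_{D,n_2}(z_2)$; this is the kind of adjustment hidden behind the citation to \cite{CYZ20}.
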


\begin{proof}[Proof of Theorem~\ref{Thm:density-point}]
It suffices to prove that for ${\rm Leb}_D$-a.e. $x\in \Gamma$, one has that
$$\liminf_{n\to\infty,~n\in P(x)}\frac{{\rm Leb}_D(B_{D,n}(x)\cap\Gamma)}{{\rm Leb}_D(B_{D,n}(x))}=1.$$
Given $\rho\in(0,1)$, we let 
$$\Gamma_\rho:=\left\{z\in\Gamma:~\liminf_{n\to\infty,~n\in P(x)}\frac{{\rm Leb}_D\left(B_{D,n}(x)\cap\Gamma\right)}{{\rm Leb}_D\left(B_{D,n}(x)\right)}<\rho\right\}.$$
It is enough to show   ${\rm Leb}_D(\Gamma_\rho)=0$ for any $\rho\in(0,1)$.  We argue by contradiction by considering some  $\rho\in(0,1)$ with ${\rm Leb}_D(\Gamma_\rho)>0$. Fix  $\varepsilon>0$ with $\rho(1+\varepsilon)<1$. We choose an open neighborhood $V$ of $\Gamma_\rho$ in $D$ such that
$${\rm Leb}_D(V)<(1+\varepsilon){\rm Leb}_D(\Gamma_\rho).$$
We consider the  covering $\mathcal V$ of $\Gamma_\rho$ defined as follows : 
$${\mathcal V}:=\left\{B_{D,n}(z)\subset V:~z\in\Gamma_\rho,~n\in P(z),~\frac{{\rm Leb}_D\left(B_{D,n}(x)\cap\Gamma\right)}{{\rm Leb}_D\left(B_{D,n}(x)\right)}<\rho\right\}.$$

We build by induction  an increasing sequence of integers $(n_\ell)_\ell$,   subfamilies  $(\mathcal V_\ell)_\ell$ of $\mathcal V$ and finite subsets $(Z_\ell)_\ell$ of $\Gamma_\rho$  such that 
\begin{enumerate}[(i)]
\item the elements of $\bigcup_\ell \mathcal V_l$ are pairwise disjoint, 
\item  for any $\ell$ the elements of $\mathcal V_\ell$ are  of the form  $B_{D,n_\ell}(z^\ell)$ with $z_\ell\in Z_\ell$ and $n_\ell\in P(z^\ell)$,
\item if $B_{D,n}(z)\in \mathcal V $ with $n_{\ell-1}\leq n< n_\ell$,    then $\widehat B_{D,n}(z)\cap B\neq \emptyset$ for some $B\in \bigcup_{\ell'< \ell} \mathcal V_{\ell'}$. 
\end{enumerate}

\paragraph{Initialization:} We find a minimal integer $n_1$ such that there is $z_1^1\in \Gamma_\rho$ with $n_1\in P(z_1^1)$ satisfying $B_{D,n_1}(z_1^1)\in {\mathcal V}$. After $z_1^1$, we choose $z_2^1$ such that $B_{D,n_1}(z_1^1)\cap {\widehat B}_{D,n_1}(z_2^1)=\emptyset$. If no such $z_2^1$, we stop this process; if we can find such $z_2^1$, then we choose $B_{D,n_1}(z_2^1)\in {\mathcal V}$.  Assume that $\{B_{D,n_1}(z_1^1),B_{D,n_1}(z_2^1),\cdots,B_{D,n_1}(z_j^1)\}$ has been found. We choose $z_{j+1}^1$ such that $B_{D,n_1}(z_i^1)\in \mathcal V$ and $B_{D,n_1}(z_i^1)\cap {\widehat B}_{D,n_1}(z_{j+1}^1)=\emptyset$ for any $1\le i\le j$. If no such $z_{j+1}^1$, we stop this process.  We let $k(1)$ be the stopping time 
and we put 
 $$Z_1:=\left\{z_1^1,z_2^1,\cdots,z_{k(1)}^1\right\}$$ and
$$\cV_1:=\left\{B_{D,n_1}\left(z_1^1\right),B_{D,n_1}\left(z_2^1\right),\cdots,B_{D,n_1}\left(z_{k(1)}^1\right)\right\}.$$

\paragraph{Induction step:}

Assume $n_1<n_2<\cdots<n_\ell$, $Z_1,Z_2,\cdots,Z_\ell$ and $\cV_1,\cV_2,\cdots,\cV_\ell$ have been found.  We choose a minimal $n_{\ell+1}>n_\ell$ such that there is $z_1^{\ell+1}$ satisfying $B_{D,n_{\ell+1}}(z_1^{\ell+1})\in {\mathcal V}$, but ${\widehat B}_{D,n_{\ell+1}}(z_1^{\ell+1})\cap B=\emptyset$ for any $B\in {\mathcal V}_1\cup {\mathcal V}_2\cup\cdots\cup{\mathcal V}_\ell$.  Assume that $z_1^{\ell+1},z_2^{\ell+1},\cdots,z_j^{\ell+1}$ have been fixed.  We choose $z_{j+1}^{\ell+1}$ such that $B_{D,n_{\ell+1}}(z_i^{\ell+1})\in \mathcal V$ and  $B_{D,n_{\ell+1}}(z_i^{\ell+1})\cap {\widehat B}_{D,n_{\ell+1}}(z_{j+1}^{\ell+1})=\emptyset$ for any $1\le i\le j$ and ${\widehat B}_{D,n_{\ell+1}}(z_{j+1}^{\ell+1})\cap B=\emptyset$ for any $B\in {\mathcal V}_1\cup {\mathcal V}_2\cup\cdots\cup{\mathcal V}_\ell$. If no such $z_{j+1}^{\ell+1}$, we stop the process.  We let $k(\ell+1)$ be the stopping time 
and we put $$Z_{\ell+1}:=\left\{z_1^{\ell+1},z_2^{\ell+1},\cdots,z_{k({\ell+1})}^{\ell+1}\right\}$$ and 
$$\cV_{\ell+1}:=\left\{B_{D,n_{\ell+1}}(z_1^{\ell+1}),B_{D,n_{\ell+1}}(z_2^{\ell+1}),\cdots,B_{D,n_{\ell+1}}(z_{k({\ell+1})}^{\ell+1})\right\}.$$

\begin{Claim}
${\rm Leb}_D(\widetilde\Gamma_\rho)=0$, where $\widetilde\Gamma_\rho:=\Gamma_\rho\setminus\left(\bigcup_{n=1}^\infty\bigcup_{B\in \cV_n}B\right)$.

\end{Claim}
\begin{proof}[Proof of the Claim]Fix some integer $\ell$. 
For $z\in\widetilde\Gamma_\rho$,  the diameter of  $B_{D,n}(z)\in \mathcal V$  goes to $0$ when  $n\in P(z)$  goes to infinity. In particular for $n$ large enough we have 
$$\forall B\in \bigcup_{\ell' \leq \ell}\mathcal V_{\ell'},  \ B\cap \widehat B_{D,n}(z)=\emptyset.$$
Fix such an integer $n=n(z,\ell)$. 
Together with (iii) there is $\ell''>\ell$ with $n_{\ell''}\leq n$ and $B\in \mathcal V_{\ell''}$ such that  $$B\cap 
\widehat B_{D,n}(z)\neq\emptyset.$$ By Lemma~\ref{Lem:intersection-contained},  we get  
$B_{D,n}(z)\subset {\widehat{\widehat B}}_{D,n}(z)$,  so that 

$${\widetilde\Gamma}_\rho\subset \bigcup_{k=\ell+1}^\infty\bigcup_{z\in Z_k}{\widehat{\widehat B}}_{D,n_k}(z).$$
By Lemma~\ref{Lem:double-size-disk}, there is $K>1$ such that 
$${\rm Leb}_D\left({\widehat{\widehat B}}_{D,n_k}(z)\right)\le K{\rm Leb}_D\left(B_{D,n_k}(z,\delta)\right).$$
Since the elements in $\bigcup_k\cV_k$ are mutually disjoint,  we have  $\sum_{k\in\NN}\sum_{z\in Z_k}{\rm Leb}_D(B_{D,n_k}(z))<\infty$. Thus 
$${\rm Leb}_D(\widetilde\Gamma_\rho)\le\sum_{k=\ell+1}^\infty\sum_{z\in Z_k}{\rm Leb}_D\left({\widehat{\widehat B}}_{D,n_k}(z)\right)\le K\sum_{k=\ell+1}^\infty\sum_{z\in Z_k}{\rm Leb}_D(B_{D,n_k}(z))\to 0,~\textrm{as}~\ell\to\infty.$$
This implies that ${\rm Leb}_D(\widetilde\Gamma_\rho)=0$ and completes the proof of the claim.
\end{proof}

It follows from the above Claim,   that
\begin{align*}
&~~~~~~~{\rm Leb}_D(\Gamma_\rho)=\sum_{n=1}^\infty\sum_{B\in V_n}{\rm Leb}_D(B\cap \Gamma_\rho)\le \sum_{n=1}^\infty\sum_{B\in V_n}{\rm Leb}_D(B\cap \Gamma)\\
&\le \rho\cdot\sum_{n=1}^\infty\sum_{B\in V_n}{\rm Leb}_D(B)\le \rho\cdot{\rm Leb}_D(V)\le \rho(1+\varepsilon){\rm Leb}_D(\Gamma_\rho).
\end{align*}
As we fixed  $\varepsilon$ with  $ \rho(1+\varepsilon)<1$ we conclude ${\rm Leb}_D(\Gamma_\rho)=0$. The proof is thus complete.
\end{proof}

We will use the following corollary of Theorem~\ref{Thm:density-point}, which is a direct  application of Egorov's theorem.

\begin{Corollary}\label{Cor:density-point-uniform-Egorov}
For any $\varepsilon>0$, there is a measurable set $\Gamma_0\subset \Gamma$ satisfying ${\rm Leb}_D(\Gamma\setminus\Gamma_0)<\varepsilon$ such that for any $\delta>0$, there is $N=N(\delta)$ such that for any $x\in\Gamma_0$ for any $n>N$, $n\in P(x)$,  it holds  that
$$\left| \frac{{\rm Leb}_D(B_{D,n}(x)\cap\Gamma)}{{\rm Leb}_D(B_{D,n}(x))}-1\right|<\delta.$$
\end{Corollary}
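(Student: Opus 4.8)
The plan is to recognize Theorem~\ref{Thm:density-point} as an \emph{a.e.\ pointwise convergence} statement for an explicit sequence of measurable functions on the finite measure space $(\Gamma,{\rm Leb}_D)$, and to upgrade it to \emph{uniform} convergence on a large subset by Egorov's theorem; the only care needed is to absorb the $x$-dependence of the Pliss time set $P(x)$ into the definition of the functions so that Egorov applies to a fixed sequence.

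Concretely, for $n\in\NN$ I would define $g_n\colon\Gamma\to[0,1]$ by
$$g_n(x)=\begin{cases}\dfrac{{\rm Leb}_D\bigl(B_{D,n}(x)\cap\Gamma\bigr)}{{\rm Leb}_D\bigl(B_{D,n}(x)\bigr)}&\text{if }n\in P(x),\\[2mm]1&\text{if }n\notin P(x).\end{cases}$$
Since $B_{D,n}(x)$ is a metric ball of radius $\tfrac{\delta_0}{3}$ in $f^nD$ pulled back by $f^{-n}$, it has positive finite ${\rm Leb}_D$-measure, and $B_{D,n}(x)\cap\Gamma\subset B_{D,n}(x)$, so $0\le g_n\le 1$. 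Each $g_n$ is measurable: the set $\{x:\ n\in P(x)\}$ is measurable (as recalled in Subsection~\ref{Subsection:Pliss}), the relation $\{(x,z)\in D\times D:\ z\in B_{D,n}(x)\}=\{(x,z):\ d_{f^nD}(f^nz,f^nx)<\tfrac{\delta_0}{3}\}$ is open in $D\times D$, so by Fubini--Tonelli the maps $x\mapsto{\rm Leb}_D(B_{D,n}(x)\cap\Gamma)$ and $x\mapsto{\rm Leb}_D(B_{D,n}(x))$ are measurable; on the complement $g_n\equiv 1$.

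Next I would observe that every $x\in\Gamma\subset\{\overline{m}_F>a''\}$ has, by Lemma~\ref{Lem:Pliss}, an infinite set of Pliss times (indeed $\overline d(P(x))\ge\alpha>0$); since the ratio defining $g_n$ is always $\le 1$, the limit $g_n(x)\to1$ as $n\to\infty$ holds \emph{if and only if} $\liminf_{n\to\infty,\,n\in P(x)}\frac{{\rm Leb}_D(B_{D,n}(x)\cap\Gamma)}{{\rm Leb}_D(B_{D,n}(x))}=1$, i.e.\ if and only if $x$ is a dynamical density point of $\Gamma$ with respect to $D$. Thus Theorem~\ref{Thm:density-point} says precisely that $g_n\to1$ ${\rm Leb}_D$-a.e.\ on $\Gamma$.

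Finally, since $D$ has bounded size, ${\rm Leb}_D(\Gamma)\le{\rm Leb}_D(D)<\infty$, so Egorov's theorem applies and yields, for the given $\varepsilon>0$, a measurable set $\Gamma_0\subset\Gamma$ with ${\rm Leb}_D(\Gamma\setminus\Gamma_0)<\varepsilon$ on which $g_n\to1$ uniformly. Unwinding uniform convergence: for each $\delta>0$ there is $N=N(\delta)$ with $\sup_{x\in\Gamma_0}|g_n(x)-1|<\delta$ for all $n>N$; for $x\in\Gamma_0$ and $n>N$ with $n\in P(x)$ this is exactly the asserted inequality, and for $n\notin P(x)$ there is nothing to prove. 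The only points requiring a little care are the measurability of $g_n$ and the bookkeeping device of setting $g_n\equiv1$ off the Pliss times; neither is a genuine obstacle, which is why the statement is a direct consequence of Egorov's theorem.
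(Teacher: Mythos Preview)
Your proof is correct and is precisely the approach the paper takes: the paper does not give a detailed argument but simply states that the corollary ``is a direct application of Egorov's theorem'' to Theorem~\ref{Thm:density-point}. Your careful packaging of the $x$-dependent Pliss set $P(x)$ into a single sequence $(g_n)_n$ by setting $g_n\equiv 1$ off $P(x)$ is exactly the bookkeeping needed to make Egorov apply verbatim, and the measurability check is routine.
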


\subsection{Proof of Corollary \ref{corConeplus}}  It is enough to show that any subset $\Gamma$ of  $\left\{\overline{m}_F>\max(\overline{a},0)\right\}$ with positive Lebesgue measure  has a nonempty intersection with the basin of an ergodic SRB hyperbolic measure  supported on $\Lambda$. 

We may now  repeat the construction  in Theorem~\ref{Thm:dominated-entropy-formula}.  Let us recall the main steps. Without loss of generality we may assume $\Gamma\subset \left\{\overline{m}_F>a''\right\}$ for some  $a''>a'>\max(\overline{a},0)$. Take  a disk  $D$ tangent to $\cC^{F}_{\theta_0}$ with
${\rm Leb}_D(\Gamma)>0 $ 
Recall $P(x)$ denotes the set of Pliss times and  consider the sets $\Lambda_\ell\subset M$ and $Q_\ell\subset \NN$ as in  Theorem~\ref{Thm:construction}.  Without loss of generality we can assume $\Lambda_\ell\subset  \Gamma_0$ where $\Gamma_0$ is the set given by Corollary \ref{Cor:density-point-uniform-Egorov}. 
Set $$\mu_{\ell}=\frac{\Leb_{D}(\cdot)}{\Leb_{D}(\Lambda_\ell)},$$
$$\nu_\ell=\frac{1}{\# Q_{\ell}}\sum_{i\in Q_{\ell}}f^i_*\mu_{\ell},$$
and 
$$\eta_\ell=\frac{1}{\# Q_{\ell}} \int \sum_{i\in P(x)\cap Q_\ell}\delta_{f^ix} \ d\mu_{\ell}.$$

By taking a subsequence we may assume $\nu_\ell$ and $\eta_\ell$ are both converging to $\mu$ and $\eta$ respectively, when $\ell$ goes to infinity. Clearly $\eta$ is a component (in general not invariant) of  the invariant measure $\mu$ and by  Item~\ref{i.uniform-lower} of Theorem~\ref{Thm:construction} we have 
\begin{align*}
\eta(M)&\geq \lim_\ell \eta_\ell(M),\\
&\geq  \liminf_{\ell\to\infty}\frac{\inf_{x\in\Lambda_\ell}\# P(x)\cap \Lambda_\ell}{\# Q_\ell},\\
&\geq\alpha.
\end{align*}

From the proof of Theorem~\ref{Thm:dominated-entropy-formula},   any limit $\mu$ of $(\mu_\ell)_\ell$ satisfies

 \begin{equation*} \lambda_{\mathrm{dim}(F)} >\max(\overline{a},0) \ \mu\text{-a.e.}
\end{equation*}
and  \begin{equation*}h_\mu(f)\ge \int\log{\rm Jac}(Df|_{F}) d\mu.
 \end{equation*}
 By definition of $\overline{a}$,  an ergodic component  $\xi$ of $\mu$ either satisfies $h_\xi(f)<\int\log{\rm Jac}(Df|_{F}) d\xi$ or 
 $i_{cu}(\xi)={\rm dim}(F)$, i.e. $\lambda_{{\rm dim}(F)+1}(x)<0$ for $\xi$-a.e. $x$. In this last case we have also by Ruelle inequality $h_\xi(f)\leq \int\sum_{\lambda_i(x)>0}\lambda_i(x)\ d\xi(x)=\int\log{\rm Jac}(Df|_{F}) d\xi$. But $h_\mu(f)\ge \int\log{\rm Jac}(Df|_{F}) d\mu$, so that  the first case does not occur and thus any ergodic component $\xi$ of $\mu$ is an ergodic SRB hyperbolic component. As already mentioned these measures are physical, thus there are at most countably many of them. The measure $\mu$ is a convex combination of these  ergodic hyperbolic SRB measures $(\mu^i)_{i\in \mathbb N}$, i.e. there is $\lambda_i>0$, $i\in \NN$,  with $\mu=\sum_i\lambda_i\mu^i$. 
There is a Pesin block $R$ with $\eta(R)>0$ such that the  size of stable manifolds at $x \in R$ has uniform size
and these stable manifolds on $R$ form an absolutely continuous 
foliation. Let $i$ with $\mu^i(R)>0$. Let $W^u(x)$ denotes the Pesin local unstable manifold at a $\mu^i$-typical point $x$. We 
denote the basin of $\mu^i$ by $\mathcal B(\mu^i)$.  By the 
geometric property of the SRB measure $\mu^i$, the set $A_R^i:=\left\{x, \ {\rm Leb}_{W^u(x)}\left(\mathcal B(\mu^i)\cap R\right)>0\right\}$ has 
positive $\mu^i$-measure, therefore positive $\eta$-measure.  It 
follows from the definition of $\eta$ that there is $x\in A^i_R$,  $y_{\ell}\in \Lambda_\ell$ and $m_\ell \in P(y_\ell)$ for  infinitely many $\ell$ such that $m_\ell$ goes to infinity and 
 $f^{m_\ell}y_\ell$ goes to $x$ when $\ell$ goes to infinity.  Observe that $W^u(x)$ is tangent to $F$ and $f^{m_\ell}D$ is tangent to $
\cC_{\theta_0/2^{m_\ell}}^F$.  Let $E^i_R=W^u(x)\cap\mathcal B(\mu^i)\cap R$ and  $W^s\left(E_R^i\right):=\bigcup_{x\in E_R^i }W^s(x)$. By the absolutely continuity of the stable 
foliation on $R$ we have with the notations of the previous subsection  $${\rm Leb}_{f^{m_\ell}D}\left(W^s\left(E_R^i\right)\cap f^{m_\ell}B_{D,m_\ell}(y_\ell)  \right)>B {\rm Leb}
_{W^u(x)}\left(E_R^i\right)$$ for some constant $B>0$ depending on $R$. Then by the distorsion property  (\ref{equation:distorsion}) we have for some other constant $B'>0$ 
\begin{align*}\frac{{\rm Leb}_{D}\left(  f^{-m_\ell}W^s(E_R^i)\cap B_{D,m_\ell}(y_\ell)  \right)}{{\rm Leb}_{D}\left( B_{D,m_\ell}(y_\ell) \right)}&>B'\frac{{\rm Leb}_{f^{m_\ell}D}\left(W^s\left(E_R^i\right)\cap f^{m_\ell} B_{D,m_\ell}(y_\ell)  \right)}{{\rm Leb}_{f^{m_\ell}D}\left(f^{m_\ell} B_{D,m_\ell}(y_\ell) \right)}\\
&>BB' {\rm Leb}_{W^u(x)}(E_R^i).
\end{align*}
But as $y_l\in \Gamma_0$ we have for $\ell$ large enough:
$$ \frac{{\rm Leb}_D\left(B_{D,m_\ell}(y_\ell)\cap\Gamma\right)}{{\rm Leb}_D\left(B_{D,m_\ell}(y_\ell)\right)}>1-BB' {\rm Leb}_{W^u(x)}\left(E_R^i\right),$$
therefore 
$$ \Gamma \cap f^{-m_\ell}W^s(E_R^i)\neq \emptyset.$$
  This concludes the proof as $ f^{-m_\ell}W^s\left(E_R^i\right)$ is contained in  the basin of $\mu^i$.

\section{Finiteness}
In this last section we prove Theorem \ref{Thm:ergodic-finiteness-away-from-zero} about the finiteness of SRB measures. 
\subsection{Plaque family theorem and stable manifolds}
We first recall some standard facts. The plaque family theorem from \cite[Theorem 5.5]{HPS77} states as follows:

\begin{Theorem}\label{Thm:plaque}

Assume that $\Lambda$ is a compact invariant set with a dominated splitting $T_\Lambda M=E\oplus F$,   then  for every $x \in \Lambda$,  there exists a $\mathcal C^1$ embedding $\psi^F_x : B(0,1) \subset F(x)\rightarrow M$ with the following properties:
\begin{itemize}
\item $ \psi^F_x(0)  = x$ and the image of $\psi^F_x$ is tangent to F(x) at x.
\item the embeddings  $\psi^F_x$ depend continuously on the $\mathcal C^1$ topology on $x\in \Lambda$.
\item there is  $\delta<1$ such that $f^{-1}\psi^F_x\left(B(0,\delta)\right)\subset \psi^F_x\left(B(0,1)\right)$ for any $x\in \Lambda$ .
\end{itemize}
A similar conclusion holds for the bundle $E$.
\end{Theorem}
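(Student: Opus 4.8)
The plan is to invoke the classical Hadamard--Perron graph transform method, exactly as in \cite[Theorem 5.5]{HPS77}; since the statement is standard I only indicate the structure of the argument. First I would pass to a suitable power of $f$ and adapt the Riemannian metric so that the domination becomes \emph{one-step}, i.e.\ so that there is $\lambda<1$ with $\|D_xf|_{E(x)}\|\cdot\|(D_xf|_{F(x)})^{-1}\|\le\lambda$ for every $x\in\Lambda$; the conclusion for $f$ then follows from that for the power together with local invariance. Next, for each $x\in\Lambda$ I would use $\exp_x$ to build charts identifying a neighborhood of $x$ in $M$ with a neighborhood of $0$ in $T_xM=E(x)\oplus F(x)$, and in these charts write $f_x:=\exp_{fx}^{-1}\circ f\circ\exp_x$, a map defined near $0$ with $f_x(0)=0$ and $D_0f_x=D_xf$ respecting the splitting. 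Since $f_x$ is only defined near $0$ while the plaques we want may leave this neighborhood under iteration, I would \emph{globalize}: multiply $f_x$ by a bump function to obtain $\widehat f_x\colon E(x)\oplus F(x)\to E(fx)\oplus F(fx)$ agreeing with $f_x$ on a small ball, equal to $D_xf$ outside a slightly larger ball, and a uniformly small $\mathcal C^1$-perturbation of the linear map $D_xf$.

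Then I would set up the complete metric space $\mathcal S$ (for the $\mathcal C^0$-distance) of continuous families $\sigma=(\sigma_x)_{x\in\Lambda}$ of maps $\sigma_x\colon F(x)\to E(x)$ with $\sigma_x(0)=0$, $D_0\sigma_x=0$ and $\mathrm{Lip}(\sigma_x)\le 1$; the graph $\Gamma_x:=\{(\sigma_x(v),v):v\in F(x)\}$ is the candidate $F$-plaque in the chart at $x$. By the cone condition coming from one-step domination, each image $\widehat f_{f^{-1}x}(\Gamma_{f^{-1}x})$ is again the graph of a $1$-Lipschitz map over $F(x)$, which defines a graph transform $\mathcal G\colon\mathcal S\to\mathcal S$; the domination inequality makes $\mathcal G$ a fiber contraction over $\mathrm{id}_\Lambda$, so by the fiber (uniform) contraction theorem it has a unique fixed point $\sigma^\ast\in\mathcal S$ depending continuously on $x$. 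Setting $\psi^F_x:=\exp_x\circ(\sigma^\ast_x,\mathrm{id})$ on $B(0,1)\subset F(x)$ gives the desired embeddings: $\psi^F_x(0)=x$; the image is tangent to $F(x)$ at $x$ because $D_0\sigma^\ast_x=0$; the family is continuous in $x$; and invariance of $\sigma^\ast$ under $\mathcal G$, together with the fact that $\widehat f_x$ coincides with $f_x$ near $0$, yields the local invariance $f^{-1}\psi^F_x(B(0,\delta))\subset\psi^F_{f^{-1}(x)}(B(0,1))$ for $\delta$ small enough. The family tangent to $E$ is obtained symmetrically, replacing $f$ by $f^{-1}$ and $F$ by $E$.

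The step requiring the most care — and where a naive contraction argument fails, because $Df$ is merely continuous and not Hölder — is promoting each plaque to a genuine $\mathcal C^1$ embedding and getting continuity of the family in the $\mathcal C^1$ topology. One enlarges the section space to carry the candidate derivative $D_v\sigma_x$ as an extra coordinate and runs the graph transform on this enlarged cocycle, invoking the fiber contraction theorem (which needs only continuity plus fiberwise contraction, not uniform contraction in the fiber) to produce a continuous invariant section whose $E$-component is the derivative of $\sigma^\ast_x$. This is precisely the content of \cite[Theorem 5.5]{HPS77}, to which we refer for the remaining routine verifications.
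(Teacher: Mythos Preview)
Your sketch is correct and follows exactly the Hirsch--Pugh--Shub graph transform argument. Note, however, that the paper does not give its own proof of this statement at all: it simply quotes the result as the plaque family theorem from \cite[Theorem~5.5]{HPS77} and moves on. Your proposal is thus an outline of the very proof the paper cites, so there is nothing to compare.
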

The following lemma can be found in many papers, for instance \cite[Section 8.2]{ABC08} or \cite{CMY22}.
\begin{Lemma}\label{Lem:stable-manifold-plaque}
Assume that $\Lambda$ is a compact invariant set with a dominated splitting $T_\Lambda M=E\oplus F$.
For any $\gamma\in(0,1)$ and any $K>0$, there is $\delta=\delta(\gamma,K)$ such that for any $x\in\Lambda$, if 
$$\prod_{i=0}^{n-1}\|Df^{-1}|_{F(f^{-i}x)}\|\le K\lambda_2^n,~~~\forall n\in\NN$$
then $\psi^F_x\left(B(0,\delta)\right)$  is contained in the unstable manifold of $x$.
\end{Lemma}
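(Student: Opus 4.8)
The plan is to run the classical graph-transform argument along the single backward orbit of $x$: the plaque family of Theorem~\ref{Thm:plaque} supplies a locally $f^{-1}$-invariant family of discs tangent to $F$, and the pointwise estimate on $\prod\|Df^{-1}|_{F(f^{-i}x)}\|$ forces the disc at $x$ to be genuinely backward contracted, hence to sit inside $W^u(x)$.

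\emph{Reading the dynamics in charts.} By Theorem~\ref{Thm:plaque} there is $\delta_0\in(0,1)$ with $f^{-1}\psi^F_x\big(B(0,\delta_0)\big)\subset\psi^F_{f^{-1}x}\big(B(0,1)\big)$ for every $x\in\Lambda$; normalising so that $D_0\psi^F_x$ is an isometry of $F(x)$, set $g_x:=(\psi^F_{f^{-1}x})^{-1}\circ f^{-1}\circ\psi^F_x$ on $B(0,\delta_0)\subset F(x)$, so that $g_x(0)=0$ and $D_0g_x=Df^{-1}|_{F(x)}$. Write $g^{(n)}_x:=g_{f^{-(n-1)}x}\circ\cdots\circ g_{f^{-1}x}\circ g_x$ for the chart expression of $f^{-n}$, which is defined as long as all intermediate iterates remain in $B(0,\delta_0)$.

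\emph{The one estimate.} Let $\gamma\in(0,1)$ be the contraction rate from the hypothesis (the $\lambda_2$ in the displayed inequality) and fix $\varepsilon>0$ with $(1+\varepsilon)\gamma<1$. Compactness of $\Lambda$, $\mathcal C^1$-continuity of $x\mapsto\psi^F_x$, and uniform continuity of $Df$ yield a $\delta_1\in(0,\delta_0]$, \emph{independent of $x$}, such that $\|D_vg_x\|\le(1+\varepsilon)\|Df^{-1}|_{F(x)}\|$ for all $x\in\Lambda$ and $v\in B(0,\delta_1)$; hence, by the mean value inequality, $\|g_x(v)\|\le(1+\varepsilon)\|Df^{-1}|_{F(x)}\|\,\|v\|$ on $B(0,\delta_1)$. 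Put $\delta:=\delta(\gamma,K):=\delta_1/\max(K,1)$. For $v\in B(0,\delta)$ I claim, by induction on $n$, that all $g^{(k)}_x(v)$ with $k\le n$ are defined and lie in $B(0,\delta_1)$ and that $\|g^{(n)}_x(v)\|\le\big((1+\varepsilon)\gamma\big)^nK\|v\|$: granting the iterates up to step $n-1$, chaining the previous estimate gives $\|g^{(n)}_x(v)\|\le(1+\varepsilon)^n\prod_{i=0}^{n-1}\|Df^{-1}|_{F(f^{-i}x)}\|\,\|v\|\le\big((1+\varepsilon)\gamma\big)^nK\|v\|$ by the hypothesis, and since $(1+\varepsilon)\gamma<1$ this is $\le K\|v\|\le K\delta\le\delta_1\le\delta_0$, which keeps the orbit in the domain and, via Theorem~\ref{Thm:plaque}, legitimises the next application of $g_{f^{-n}x}$, closing the induction.

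\emph{Conclusion and the main difficulty.} It follows that $\|g^{(n)}_x(v)\|\to0$ exponentially fast, so by the uniform bi-Lipschitz bounds on the charts $\psi^F$ over the compact set $\Lambda$ one gets $d\big(f^{-n}(\psi^F_x(v)),f^{-n}x\big)\to0$ exponentially for every $v\in B(0,\delta)$; combined with tangency to $F$, this places $\psi^F_x\big(B(0,\delta)\big)$ inside the local unstable manifold of $x$, which is precisely the set characterised by such backward-exponential contraction along $F$. The only delicate point is producing $\delta_1$ \emph{uniformly} over all $x\in\Lambda$ at once — which is exactly what compactness together with the $\mathcal C^1$-continuity of the plaque family buys — and the bookkeeping that the accumulated distortion factor $(1+\varepsilon)^n$ is harmless because it is absorbed into the replacement of the rate $\gamma$ by $(1+\varepsilon)\gamma<1$.
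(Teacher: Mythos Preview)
The paper does not actually prove this lemma; it only cites \cite[Section 8.2]{ABC08} and \cite{CMY22}. Your argument is correct and is precisely the standard proof one finds in those references: read $f^{-1}$ in the plaque charts, use compactness and $\mathcal C^1$-continuity of the plaque family to obtain a uniform $\delta_1$ on which the chart derivative is $(1+\varepsilon)$-close (multiplicatively) to $Df^{-1}|_F$, and then chain the hypothesis $\prod_{i=0}^{n-1}\|Df^{-1}|_{F(f^{-i}x)}\|\le K\gamma^n$ to get exponential backward contraction of the plaque. Your handling of the apparent typo (the displayed $\lambda_2$ should be the $\gamma$ from the statement) is appropriate, and your care in closing the induction---checking that the iterates stay in $B(0,\delta_1)$ so that the next chart map is defined---is exactly the point that needs attention.

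One small remark: the multiplicative estimate $\|D_vg_x\|\le(1+\varepsilon)\|Df^{-1}|_{F(x)}\|$ requires $\|Df^{-1}|_{F(x)}\|$ to be bounded away from zero uniformly on $\Lambda$, which holds since $f$ is a diffeomorphism and $\Lambda$ is compact; you use this implicitly and it is worth making explicit.
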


\subsection{Bi-Pliss times}
We assume that $f$ is a $\mathcal C^{1+\alpha}$ diffeomorphism.
Given a compact invariant set $\Lambda$, denote by $\cM_{{\rm SRB}}(\Lambda)$ the set of ergodic SRB measures supported on $\Lambda$. When $\Lambda$ admits a dominated splitting $T_\Lambda M=E\oplus F$ and $\mu$ is an ergodic measure $\mu$ supported on $\Lambda$, we let 
$$m_F(\mu)=\int \log m(Df|_{F}){\rm d}\mu,~~~M_E(\mu)=\int \log\|Df|_{E}\|{\rm d}\mu.$$
By Birkhoff Ergodic theorem, we have  for $\mu$-almost every point $x$:
$$m_F(\mu)=\lim_{n\to+\infty}\frac{1}{n}\sum_{i=1}^{n}\log m\left(Df|_{F(f^ix)}\right)=\lim_{n\to+\infty}-\frac{1}{n}\sum_{i=0}^{n-1}\log \left\|(Df^{-1}|_{F(f^{-i}x)}\right\|,$$
$$M_E(\mu)=\lim_{n\to\infty}\frac{1}{n}\sum_{i=0}^{n-1}\log \left\|Df|_{F(f^ix)}\right\|.$$
 Given $K>0$ and $\gamma\in(0,1)$, we define the following Pesin blocks 
$$P^{F}(\gamma,K):=\left\{x\in\Lambda:~\prod_{i=0}^{n-1}\left\|Df^{-1}|_{F(f^{-i}x)}\right\|\le K\gamma^n,~\forall n\in\NN\right\},$$
and 
$$P^{E}(\gamma,K):=\left\{x\in\Lambda:~\prod_{i=0}^{n-1}\left\|Df|_{E(f^ix)}\right\|\le K\gamma^n,~\forall n\in\NN\right\}.$$

To simplify the notations we just write $P^{F}(\gamma)$ and  $P^{E}(\gamma)$ for $P^{F}(\gamma,1) $ and  $P^{E}(\gamma,1)$   respectively.

The mean ergodic inequality states that if $\phi$ is an integrable observable of an ergodic measure preserving system $(X,T, \mathcal B, \nu)$ then $$\int_A \phi\, d\nu\geq 0$$ with $$A:=\left\{x\in X, \ \exists n>0 \text{ with }\sum_{k=0}^{n-1}\phi\circ T^kx>0\right\}.$$
In particular \begin{equation}\label{equ:mean}
\int_{X\setminus A} \phi\, d\nu\leq \int \phi\,d\nu. \end{equation}
 If $\mu$ is an ergodic measure of $f$ with $ m_F(\mu)>-\log \gamma$, we get $M\setminus A=P^F(\gamma)$ with $\phi=\log \|Df^{-1}|_{F}\|-\log \gamma$ and Equation (\ref{equ:mean})  for the standard Borel $\sigma$-algebra $\mathrm{Bor}$ on $M$ and the measure preserving system $(M,f^{-1}, \mathrm{Bor}, \mu)$ reads as follows:

$$\int_{P^F(\gamma)} \phi \, d\mu\leq \int \phi\, d\mu= m_F(\mu)-\log \gamma <0,$$
 in particular $$\mu\left(P^F(\gamma)\right)>0.$$

 We consider an adapted norm for the dominated splitting (see \cite{Gou}), in particular 
 \begin{equation}\label{equ: adapted}\forall x\in \Lambda, \ \|Df|_{E(x)}\|\|Df^{-1}|_{F(fx)}\|\leq \lambda<1.  
 \end{equation}

\begin{Lemma}\label{Lem:bi-Pliss}
Let $\gamma \in (\lambda^{1/2},1)$ and  let  $\mu$ be an   ergodic measure satisfying $m^{F}(\mu)>-\log\gamma$ and $M^E(\mu)<\log\gamma$. Then

$$\mu\left(P^F(\gamma)\cap P^E(\gamma)\right)=\mu\left(P^F(\gamma)\right)=\mu\left(P^E(\gamma)\right)>0.$$

\end{Lemma}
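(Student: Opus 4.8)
The plan is to establish the two positivity statements first, and then to show that the Pesin blocks $P^F(\gamma)$ and $P^E(\gamma)$ agree up to a $\mu$-null set; the stated chain of equalities then follows at once.

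\smallskip

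\emph{Positivity.} The inequality $\mu(P^F(\gamma))>0$ is exactly the mean ergodic computation carried out just above, applied to $(M,f^{-1},\mathrm{Bor},\mu)$ with the observable $\log\|Df^{-1}|_F\|-\log\gamma$. For $\mu(P^E(\gamma))>0$ I would apply the mean ergodic inequality (\ref{equ:mean}) to $(M,f,\mathrm{Bor},\mu)$ with $\phi:=\log\|Df|_E\|-\log\gamma$: here $M\setminus A=P^E(\gamma)$ and $\int\phi\,d\mu=M_E(\mu)-\log\gamma<0$ by hypothesis, so $\int_{P^E(\gamma)}\phi\,d\mu\le\int\phi\,d\mu<0$, which forces $\mu(P^E(\gamma))>0$.

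\smallskip

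\emph{Coincidence of the two blocks.} Since $\|Df^{-1}|_{F(fx)}\|=m(Df|_{F(x)})^{-1}$, the adapted-metric domination (\ref{equ: adapted}) reads $\log\|Df|_{E(x)}\|+\log\|Df^{-1}|_{F(fx)}\|\le\log\lambda$ for every $x\in\Lambda$. Summing this over $x,fx,\dots,f^{n-1}x$ and using $\lambda<\gamma^2$, one obtains for every $x\in\Lambda$ and every $n\ge1$
$$\left(\sum_{k=0}^{n-1}\log\|Df|_{E(f^kx)}\|-n\log\gamma\right)+\left(\sum_{i=0}^{n-1}\log\|Df^{-1}|_{F(f^{n-i}x)}\|-n\log\gamma\right)\le n(\log\lambda-2\log\gamma)<0.$$
The first bracket is the $n$-th partial sum deciding whether $x\in P^E(\gamma)$, and the second is the $n$-th partial sum deciding whether $f^nx\in P^F(\gamma)$. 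I would then feed this into the following scheme. By the Birkhoff ergodic theorem both families of partial sums drift to $-\infty$ along $\mu$-typical orbits, since $M_E(\mu)-\log\gamma<0$ and $-m_F(\mu)-\log\gamma<0$. By Poincaré recurrence a $\mu$-typical point of $P^F(\gamma)$ returns to $P^F(\gamma)$ infinitely often; reading the displayed inequality at those return times $n$ (where the second bracket, being a partial sum of a point of $P^F(\gamma)$, is $\le0$), one propagates the constraint so that the first bracket stays $\le0$ for all $n$, i.e. the point also lies in $P^E(\gamma)$. Running the same computation for $f^{-1}$ — whose adapted dominated splitting is $F\oplus E$, obtained by inverting (\ref{equ: adapted}) — gives the reverse inclusion. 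Hence $\mu(P^F(\gamma)\setminus P^E(\gamma))=\mu(P^E(\gamma)\setminus P^F(\gamma))=0$.

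\smallskip

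\emph{Conclusion and main obstacle.} Granting the coincidence, $\mu(P^F(\gamma)\cap P^E(\gamma))=\mu(P^F(\gamma))=\mu(P^E(\gamma))$, which is positive by the first step. The genuinely delicate point is the coincidence of the blocks: the telescoping inequality is only one-sided, and the defining partial sums are not uniformly bounded below along orbits, so the $\mu$-a.e. equality cannot be read off pointwise — it must be extracted by combining recurrence to the (a priori merely positive measure) blocks with the ergodic theorem, and carrying out that bookkeeping carefully (in particular, the right choice of return times and the control of the partial sums there) is the crux of the lemma.
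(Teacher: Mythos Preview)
Your positivity argument and the symmetry reduction to one inclusion are correct and match the paper. The gap is in the ``propagation'' step, and it is a sign error rather than a missing detail. At a forward return time $n$ to $P^F(\gamma)$ the second bracket is indeed $\le 0$, but the telescoped inequality only says the \emph{sum} of the two brackets is negative; a nonpositive second bracket gives no upper bound whatsoever on the first. You would need the second bracket to be $\ge 0$ to force the first $\le 0$, so reading the inequality at $P^F$-return times is exactly the wrong place.

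The paper exploits the same telescoped domination, but at the times \emph{between} two consecutive visits $n_1<n_2$ of the orbit to $P^F(\gamma)$. The key combinatorial step (proved by induction) is that for every $n_1<j<n_2$ one has $\prod_{i=n_1+1}^{j}\|Df^{-1}|_{F(f^ix)}\|>\gamma^{j-n_1}$: if this failed at some $j$, dividing by the earlier (inductively large) products and concatenating with the $P^F$-condition at $f^{n_1}x$ would force $f^{j}x\in P^F(\gamma)$, contradicting consecutiveness. In your language this says the second bracket, based at $f^{n_1}x$, is \emph{strictly positive} on the whole gap, and then your displayed inequality gives the first bracket $<0$ there. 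To conclude $f^{n_1}x\in P^E(\gamma)$ one still needs to cover all $n$, and for this the paper also uses recurrence to $P^E(\gamma)$: choosing $n_1<n_2$ to straddle the first forward $P^E$-time $m$, the estimate on $[n_1,m)$ is concatenated with the $P^E$-condition from $f^mx$ onward. So the ``bookkeeping'' you flag is real, but it hinges on the Pliss-type claim about the gaps, not on the return times themselves.
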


\begin{proof}

Take $x$ typical for $\mu$ so that we have   with $A=P^F(\gamma)$, $ P^E(\gamma)$ or   $P^E(\gamma)\cap P^F(\gamma)$:
$$\frac{1}{|n|}\sharp \left\{k\in [0,n), \ f^kx \in A\right\}\xrightarrow{n\rightarrow \infty}\mu(A).$$
To prove the lemma, it is enough to show $\left[f^kx\in P^F(\gamma)\right]\Leftrightarrow \left[f^kx \in P^E(\gamma)\right]$  for any $k\in \mathbb Z$. In fact, by symmetry under taking the inverse of $f$, we only need  to prove  the implication $\Rightarrow$. As $\mu\left(P^F(\gamma)\right)>0$ (resp. $\mu\left(P^E(\gamma)\right)>0)$,  for $\mu$-almost every point, there are infinitely many $n>0$ and infinitely many $n<0$ such that $f^nx\in P^{F}(\gamma)$ (resp.  $\in P^E(\gamma)$). 
We argue by contradiction by assuming $\Rightarrow$ does not hold true. Let $k$ be an integer  with $f^kx\in P^F(\gamma)\setminus P^E(\gamma)$.  We consider the smallest integer $m$ larger than $k$ with $f^mx\in P^{E}(\gamma)$. Then there are 
there are integers $n_1$ and $n_2$ with  $k\leq n_1<m\leq n_2$ such that 
\begin{enumerate}[(i)]
\item  $f^{n_i}x\in P^{F}(\gamma)$ for $i=1,2$;
\item for any $n\in (n_1,n_2)$, one has that $f^nx\notin P^{F}(\gamma)$.
\end{enumerate}
By definition of $m$ we have $f^{n_1}x\in P^F(\gamma)\setminus P^E(\gamma)$.

\begin{Claim}

$$\forall n_1<j\le n_2-1, \ \prod_{i=n_1+1}^j {\left\|Df^{-1}|_{F(f^ix)}\right\|}>\gamma^{j-n_1}.$$
\end{Claim}
\begin{proof}[Proof of the Claim]
The Claim holds for $j=n_1+1$. Otherwise,  $\|Df^{-1}|_{F(f^{n_1+1}x)}\|\le\gamma$. Since $f^{n_1}(x)\in P^{F}(\gamma)$, by concatenation, we get $f^{n_1+1}x\in P^{F}(\gamma)$. This contradicts  the second item (ii).

Now we assume that the formula holds for $n_1+1,n_1+2,\cdots,j<n_2-1$, i.e., for any $n_1+1\le \ell\le j$, we have 
\begin{equation}\label{e.long-bad}
\prod_{i=n_1+1}^\ell {\left\|Df^{-1}|_{F(f^ix)}\right\|}>\gamma^{\ell-n_1}.
\end{equation}
We will check the claim for $j+1$. If it does not hold, then 
\begin{equation}\label{e.one-good}
\prod_{i=n_1+1}^{j+1} {\left\|Df^{-1}|_{F(f^ix)}\right\|}\le\gamma^{j+1-n_1}.
\end{equation}
Thus, by \eqref{e.long-bad} and \eqref{e.one-good} for any $n_1+1\le \ell\le j$, one has 
$$\prod_{i=\ell+1}^{j+1} {\left\|Df^{-1}|_{F(f^ix)}\right\|}\le\gamma^{j+1-\ell}.$$
Since $f^{n_1}x\in P^{F}(\gamma)$, by concatenation, we get $f^{j+1}x\in P^{F}(\gamma)$. This contradicts  the second item (ii).
\end{proof}
It follows from the above claim, the domination property (\ref{equ: adapted}) and $\gamma^2 >\lambda$ that for any $n_1<j\le n_2-1$:
\begin{align*}
\prod_{i=n_1}^{j-1} \left\|Df|_{E(f^ix)}\right\|\le \lambda^{j-n_1}\left(\prod_{i=n_1+1}^j {\left\|Df^{-1}|_{F(f^ix)}\right\|}\right)^{-1}\le\lambda^{j-n_1}/\gamma^{j-n_1}\le \gamma^{j-n_1}.
\end{align*}
Then by concatenating up to $m$, we obtain the contradiction  $f^{n_1}x\in P^{E}(\gamma)$.
\end{proof}

\subsection{Proof of Theorem \ref{Thm:ergodic-finiteness-away-from-zero}}

Recall that we have by the geometric property of SRB measures

\begin{Fact}
For a hyperbolic ergodic SRB measure $\mu$, for $\mu$-almost every point $x$, ${\rm Leb}_{W^u(x)}$-almost every point $y$ is contained in the basin of $\mu$.
\end{Fact}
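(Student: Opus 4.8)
The plan is to show that the basin $\mathcal B(\mu):=\{z:~\tfrac1n\sum_{k=0}^{n-1}\delta_{f^kz}\to\mu\}$ is, leafwise, of full Lebesgue measure on $\mu$-almost every unstable manifold. Two soft observations come first. Since $\mu$ is ergodic, Birkhoff's theorem gives $\mu(\mathcal B(\mu))=1$. Moreover $\mathcal B(\mu)$ is fully $f$-invariant: inserting or deleting one term in a Ces\`aro average changes it by a $O(1/n)$ quantity and hence does not affect its weak-$*$ limit, so $z\in\mathcal B(\mu)\iff fz\in\mathcal B(\mu)$.

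Next I would bring in the SRB structure. As $\mu$ is hyperbolic, Pesin theory supplies unstable manifolds $W^u(x)$ at $\mu$-a.e.\ $x$, and one fixes a measurable partition $\xi$ subordinate to this unstable lamination (as recalled in the introduction); since $\mu$ is SRB, the conditionals $\mu^\xi_x$ are absolutely continuous with respect to ${\rm Leb}_{W^u(x)}$. The point at which $\mathcal C^{1+\alpha}$ is used is the upgrade of this to \emph{equivalence}: writing $J^u:=\log{\rm Jac}(Df|_{F})$ along the unstable leaf, for $y$ in the leaf through $x$ the points $f^{-n}y,f^{-n}x$ converge to each other exponentially fast (backward contraction along the unstable leaf), so the H\"older regularity of $J^u$ makes $\prod_{n\ge1}e^{J^u(f^{-n}x)-J^u(f^{-n}y)}$ converge to a positive finite limit; this limit is the ratio of the densities of $\mu^\xi_x$ at $y$ and at $x$, so the density is everywhere positive on $\xi(x)$, i.e.\ $\mu^\xi_x\sim{\rm Leb}_{\xi(x)}$ for $\mu$-a.e.\ $x$. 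Since $\mu(\mathcal B(\mu))=1$ forces $\mu^\xi_x(\mathcal B(\mu))=1$ for $\mu$-a.e.\ $x$, equivalence turns this into ${\rm Leb}_{W^u(x)}\big(\xi(x)\setminus\mathcal B(\mu)\big)=0$.

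Finally I would propagate this from a partition element to the entire leaf. Let $G$ be the ($\mu$-conull) set of $x$ with $\mu^\xi_x\sim{\rm Leb}_{\xi(x)}$ and $\mu^\xi_x(\mathcal B(\mu))=1$, and put $\widehat G:=\bigcap_{n\ge0}f^nG$, still of full $\mu$-measure by invariance of $\mu$. Fix $x\in\widehat G$. Choosing $\xi$ increasing under $f$ in the usual way, the sets $f^n\big(\xi(f^{-n}x)\big)$ form a nested exhaustion of $W^u(x)$. On each of them $\mathcal B(\mu)$ is leafwise co-null: $f^{-n}x\in G$ gives ${\rm Leb}_{W^u(f^{-n}x)}\big(\xi(f^{-n}x)\setminus\mathcal B(\mu)\big)=0$, and applying the diffeomorphism $f^n$ — which sends leaf-null sets to leaf-null sets and preserves $\mathcal B(\mu)$ — transports this to $f^n(\xi(f^{-n}x))$. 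Taking the union over $n$ yields ${\rm Leb}_{W^u(x)}\big(W^u(x)\setminus\mathcal B(\mu)\big)=0$; since $\mu(\widehat G)=1$, this is the asserted Fact (and in particular it holds for the local unstable manifolds, which are subsets of the full leaves).

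The main obstacle — and the only step that is not purely formal — is the positivity of the unstable conditional densities, equivalently the equivalence $\mu^\xi_x\sim{\rm Leb}_{\xi(x)}$. This is exactly the bounded-distortion input along unstable leaves, of the same nature as \eqref{equation:distorsion}, and it is where the $\mathcal C^{1+\alpha}$ hypothesis is indispensable. Everything else — ergodicity, $f$-invariance of the basin, and the exhaustion of a leaf by iterated partition elements — is routine Pesin theory.
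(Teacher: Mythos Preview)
Your argument is correct and follows the standard route. The paper itself does not prove this statement: it is introduced with ``Recall that we have by the geometric property of SRB measures'' and treated as a known fact from Pesin theory. Your sketch supplies precisely the classical argument---full $\mu$-measure of the basin by ergodicity, equivalence (not mere absolute continuity) of the unstable conditionals with leaf Lebesgue via bounded distortion in $\mathcal C^{1+\alpha}$, and propagation along each leaf using $f$-invariance of the basin together with the exhaustion $W^u(x)=\bigcup_n f^n\big(\xi(f^{-n}x)\big)$. One minor notational point: you set $J^u=\log{\rm Jac}(Df|_F)$, tacitly identifying the Pesin unstable bundle with the dominated bundle $F$; this is legitimate in Section~4.3 where the Fact is invoked (there the unstable direction of the SRB measures under consideration does coincide with $F$), but for a general hyperbolic ergodic SRB measure the Jacobian should be taken along the Oseledets unstable bundle.
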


We are now in a position to prove Theorem \ref{Thm:ergodic-finiteness-away-from-zero}. 
Fix $a>0$ and write   $a=-\log \gamma$ with $\gamma\in (0,1)$.

Let $\mu$ be an ergodic (hyperbolic) SRB measure with $m^{F}(\mu)>-\log\gamma$ and $M^E(\mu)<\log\gamma$.  For $\mu$-typical points $x$,  the unstable bundle at $x$ coincides with $F(x)$.  If $x$ belongs moreover to  $ P^F$  the size of the unstable manifold at $x$ is bounded from below by some $\delta>0$ by  Lemma \ref{Lem:stable-manifold-plaque} (we let $W_\delta^u(x)=\psi^F_x\left(B(0,\delta)\right)$ in the following). 

From Lemma \ref{Lem:bi-Pliss}, by taking $\mathcal B(\mu)$ to be the basin of $\mu$, we get 
$$\mu\left(P^F(\gamma)\cap P^E(\gamma)\right)=\mu\left(P^F(\gamma)\cap P^E(\gamma)\cap \mathcal B(\mu) \right)>0.$$
By the geometric property of SRB measures there is $x=x_\mu\in P^F(\gamma)$ such that 
$${\rm Leb}_{W^u_\delta(x)}\left(P^E(\gamma)\cap \mathcal B(\mu)\right)>0$$
Without loss of generality we may assume $x_\mu$ is a Lebesgue density point of $ P^E(\gamma)\cap \mathcal B(\mu)$ for ${\rm Leb}_{W^u_\delta(x)}$. 

 Assume there are infinitely many distinct such  SRB measures $\mu_1,\cdots ,\mu_n, 
 \cdots$ and let $x_n=x_{\mu_n}$ for any $n$.  As the stable (resp.  unstable) manifolds at 
 points in $P^F(\gamma)$ (resp.  in $P^E(\gamma)$) have lower bounded size,     $W^s(y)$ is transverse to 
 $W^u_\delta(x_n)$ for $y\in W^u_\delta(x_m) \cap P^E(\gamma)$ whenever $x_n$ and $x_m$, $ n\neq m$,    are close enough.  By absolutely continuity of the stable foliation, 
 we get 
 
$${\rm Leb}_{W^u_\delta(x_n)}\left( W^s\left( W^u_\delta(x_m)\cap P^F(\gamma)\cap P^E(\gamma)\cap \mathcal B(\mu_m) \right)\right)>0.$$

From the above fact the basins  $\mathcal B(\mu_n ) $ and  $\mathcal B(\mu_m) $ have non  empty intersection, therefore $\mu_n=\mu_m$.  This contradicts our assumption and concludes the proof of 
Theorem \ref{Thm:ergodic-finiteness-away-from-zero}.

\section*{Appendix}Following Lemma 1 in \cite{Bur19}, we prove the equality (\ref{eq:empexp}) stated in the introduction. 

\begin{Lemma}Assume that $\Lambda=\bigcap_{n\in \NN}f^nU$ is an attractor  of  a $\mathcal C^{1}$ diffeomorphism $f$ with a dominated splitting $T_\Lambda M=E\oplus  F$.  Then we have for all $x\in U,$  $$\chi^F_{\mathrm{min}}(x,f) =\sup_{\mu\in pw(x)}\int \lambda_{\mathrm{dim}(F)}\, d\mu. $$
\end{Lemma}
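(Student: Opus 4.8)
The plan is to handle, at each fixed time-scale $p$, the passage from the Birkhoff $\limsup$ to a maximum over empirical measures, and then to let $p\to\infty$ using the (super)additive ergodic theorem together with a soft interchange of the supremum over $p$ and the supremum over $pw(x)$; this follows the scheme of Lemma~1 in \cite{Bur19}. First I would fix $x\in U$, extend $F$ continuously over the compact set $\overline U$, and set $\phi_p(y):=\log m\left(Df^p|_{F(y)}\right)$, a continuous bounded function on $\overline U$. Since $\Lambda=\bigcap_n f^nU$ is an attractor, the compacta $f^n\overline U$ decrease to $\Lambda$, so every weak-$*$ limit of $\mu_x^n:=\frac1n\sum_{k<n}\delta_{f^kx}$ is an $f$-invariant probability measure supported on $\Lambda$; in particular $pw(x)$ is nonempty and compact, and on $\Lambda$ the bundle $F$ is genuinely invariant, so $(\phi_p)_p$ is there a superadditive subcocycle, $\phi_{p+q}(y)\ge\phi_p(f^qy)+\phi_q(y)$ for $y\in\Lambda$ (using $m(AB)\ge m(A)m(B)$). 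Writing $\frac1n\sum_{i=0}^{n-1}\phi_p(f^ix)=\int\phi_p\,d\mu_x^n$ and extracting first a subsequence realizing the $\limsup$ and then a weak-$*$ convergent one, I obtain for each $p$
$$A_p(x):=\limsup_{n\to\infty}\frac1n\sum_{i=0}^{n-1}\phi_p(f^ix)=\max_{\mu\in pw(x)}\int\phi_p\,d\mu .$$

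Next I would record the consequences of the subadditive ergodic theorem applied to $(-\phi_p)_p$ on $\Lambda$: for any $f$-invariant $\mu$ supported on $\Lambda$ the sequence $p\mapsto\int\phi_p\,d\mu$ is superadditive, hence $\lim_p\frac1p\int\phi_p\,d\mu=\sup_p\frac1p\int\phi_p\,d\mu$ exists and equals $\int\lambda_{\mathrm{dim}(F)}\,d\mu$. Here one uses the domination $E\oplus F$ to identify the pointwise limit $\lim_p\frac1p\log m\left(Df^p|_{F(y)}\right)$ with the Lyapunov exponent $\lambda_{\mathrm{dim}(F)}(y)$ at $\mu$-a.e. (Oseledets-regular) $y$, and the ergodic decomposition — together with the uniform bound $\sup_p\sup_{\Lambda}\frac1p|\phi_p|<\infty$ and dominated convergence — to pass from ergodic to general invariant $\mu$.

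Finally I would combine the two ingredients. For the upper bound, choosing for each $p$ a measure $\mu_p\in pw(x)$ attaining the maximum above, I get
$$\tfrac1pA_p(x)=\tfrac1p\int\phi_p\,d\mu_p\le\sup_q\tfrac1q\int\phi_q\,d\mu_p=\int\lambda_{\mathrm{dim}(F)}\,d\mu_p\le\sup_{\mu\in pw(x)}\int\lambda_{\mathrm{dim}(F)}\,d\mu ,$$
valid for every $p$, so $\limsup_p\frac1pA_p(x)\le\sup_{\mu\in pw(x)}\int\lambda_{\mathrm{dim}(F)}\,d\mu$. For the lower bound, for any fixed $\mu\in pw(x)$ one has $\frac1pA_p(x)=\frac1p\max_\nu\int\phi_p\,d\nu\ge\frac1p\int\phi_p\,d\mu$, hence $\liminf_p\frac1pA_p(x)\ge\lim_p\frac1p\int\phi_p\,d\mu=\int\lambda_{\mathrm{dim}(F)}\,d\mu$, and taking the supremum over $\mu\in pw(x)$ gives the reverse inequality. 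Together these show that $\lim_p\frac1pA_p(x)$ exists and equals $\sup_{\mu\in pw(x)}\int\lambda_{\mathrm{dim}(F)}\,d\mu=\chi^F_{\mathrm{min}}(x,f)$'s claimed value, which is exactly the asserted identity.

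The only genuinely non-mechanical point is the interchange in the last paragraph: the maximizers $\mu_p$ depend on the scale $p$, and this is dealt with by the trivial bound $\sup_q\frac1q\int\phi_q\,d\mu_p\ge\frac1p\int\phi_p\,d\mu_p$, which absorbs the scale-dependent choice into a single Lyapunov integral. A secondary, routine, obstacle is the bookkeeping in the subadditive ergodic theorem step for non-ergodic measures and the identification of the subcocycle limit with $\lambda_{\mathrm{dim}(F)}$ via domination; everything else is standard weak-$*$ compactness.
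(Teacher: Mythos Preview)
Your proof is correct and follows essentially the same route as the paper's own argument (which likewise adapts Lemma~1 of \cite{Bur19}): both identify $A_p(x)$ with $\max_{\mu\in pw(x)}\int\phi_p\,d\mu$, use superadditivity to write $\int\lambda_{\dim(F)}\,d\mu=\sup_p\frac1p\int\phi_p\,d\mu$, and then interchange the two suprema to conclude. Your formulation of the key step as the single identity $A_p(x)=\max_{\mu\in pw(x)}\int\phi_p\,d\mu$ is slightly cleaner than the paper's, which establishes the two inequalities separately, but the content is the same.
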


\begin{proof}
For $x\in U$ and $p\in \mathbb N^*$ we let $\beta_p(x):=\limsup_{n\to+\infty}\frac{1}{n}\sum_{i=0}^{n-1}\log m\left(Df^p|_{F(f^ix)}\right)$.  The sequence 
$\left(\beta_p(x)\right)_p$ is superadditive,  in particular  the limit $\chi_{\mathrm{min}}^F(x,f)=\lim_{p\to+\infty} \frac{\beta_p(x)}{p}$ is well defined.  
Let $\mu=\lim_k\mu^{n_k}_x\in p\omega(x)$ for an increasing sequence of integers $(n_k)_k$. 
For all positive integers $n$ and $p$ we have 
\[\int \log m\left(Df^p|_{F(y)}\right)\, d\mu_n^x(y)=\frac{1}{n}\sum_{i=0}^{n-1}\log m\left(Df^p|_{F(f^ix)}\right).\]
Taking the limit over $n=n_k$ when $k$ goes to infinity  we get 
\begin{equation}\label{eq::} \int\frac{ \log m\left(Df^p|_{F(y)}\right)}{p}\, d\mu(y)\leq \frac{\beta_p(x)}{p}.
\end{equation}

It is well known that for $\mu$ a.e.  $y$,  
$$\frac{ \log m\left(Df^p|_{F(y)}\right)}{p}\xrightarrow{p\to +\infty}  \lambda_{\mathrm{dim}(F)}(y).$$ 
Therefore by taking the limit when $p$ goes to infinity in  (\ref{eq::}) we have finally
\[\sup_{\mu\in pw(x)}\int \lambda_{\mathrm{dim}(F)}\, d\mu\leq \chi_{\mathrm{min}}^F(x,f).\]

Let us now show the converse inequality.   For any $p$ there exist a subsequence $(n_{k,p})_k$ such that 
\[\beta_p(x)=\lim_k\frac{1}{n_{k,p}}\sum_{i=0}^{n_{k,p}-1} \log m\left(Df^p|_{F(f^ix)}\right).\]
Then if $\mu_p\in p\omega(x)$ is a weak limit of $\left(\mu^{n_{k,p}}_x\right)_k$ we have 
$$ \int\log m\left(Df^p|_{F(y)}\right)\, d\mu_p(y)= \beta_p(x).
$$

For any $f$-invariant measure $\mu$ and any $z\in U$, the sequences $
\left(\int  \log m\left(Df^p|_{F(y)}\right)\, d\mu(y)\right)_p$ and $(\beta_p(z))_p$ being 
both superadditive,  the terms $\chi^+(\mu)$ and $\chi^F_{\mathrm{min}}(z,f)$ are 
respectively the supremum of the sequences  $
\left(\frac{\int  \log m\left(Df^p|_{F(y)}\right)\, d\mu(y)}{p}\right)_p$ and $\left(\frac{\beta_p(z)}{p}\right)_p$.   We get therefore:
\begin{eqnarray*}\sup_{\mu\in p\omega(x)}\int \lambda_{\mathrm{dim}(F)}\, d\mu&=&\sup_{\mu\in p\omega(x)}\sup_p\frac{ \int \log 
m\left(Df^{p}|_{F(y)}\right)\, d\mu(y)}{p}.
\end{eqnarray*}
By   inverting the  two suprema  in the right-hand side term we conclude:
\begin{eqnarray*}\sup_{\mu\in p\omega(x)}\int \lambda_{\mathrm{dim}(F)}\, d\mu&=&
\sup_{p}\sup_{\mu\in p\omega(x)}\frac{ \int \log 
m\left(Df^{p}|_{F(y)}\right)\, d\mu(y)}{p},\\
&\geq &\sup_p \frac{ \int \log 
m\left(Df^{p}|_{F(y)}\right)\,  d\mu_{p}(y)}{p},\\
&\geq &\sup_p \frac{\beta_{p}(x)}{p}=\chi^F_{\mathrm{min}}(x,f).
\end{eqnarray*}

\end{proof}

\vskip 5pt

\begin{tabular}{l l l}

\emph{\normalsize David Burguet}

\medskip\\

\small LPSM
\\
\small Sorbonne Universit\' e
\\
\small Paris, France
\\
\texttt{david.burguet@upmc.fr}

\\
\\

\emph{\normalsize Dawei Yang}

\medskip\\

\small School of Mathematical Sciences
\\
\small Soochow University
\\
\small Suzhou, 215006, P.R. China
\\
\texttt{yangdw1981@gmail.com,yangdw@suda.edu.cn}

\end{tabular}

\end{document}